\numberwithin{equation}{section}
\numberwithin{figure}{section}
\theoremstyle{plain}
\newtheorem{thm}{Theorem}[section]
\newtheorem{prop}[thm]{Proposition}
\newtheorem{cor}[thm]{Corollary}
\newtheorem{lemma}[thm]{Lemma}
\theoremstyle{definition}
\newtheorem{rem}[thm]{Remark}
\newtheorem{dfn}[thm]{Definition}
\renewcommand{\log}{\ln}
\newcommand{\gnp}{G_{n,p}}
\newcommand\eps{\varepsilon}
\newcommand{\Exp}{\mathbb{E}}
\newcommand{\Prob}{\mathbb{P}}
\newcommand{\E}{\mathcal{E}}
\newcommand{\maker}{\mathcal{M}}
\newcommand{\breaker}{\mathcal{B}}
\newcommand{\waiter}{\mathcal{W}}
\newcommand{\client}{\mathcal{C}}
\newcommand\dang{\operatorname{dang}}
\begin{document}

\title[Positional games on randomly perturbed graphs]{Positional games on randomly perturbed graphs}

\author[Dennis Clemens]{Dennis Clemens}
\author[Fabian Hamann]{Fabian Hamann}
\author[Yannick Mogge]{Yannick Mogge}

\address{(DC, FH, YM) Hamburg University of Technology, Institute of Mathematics, Am Schwarzenberg-Campus 3, 21073 Hamburg, Germany }
\email{dennis.clemens@tuhh.de, fabian.hamann@tuhh.de, yannick.mogge@tuhh.de}

\author[Olaf Parczyk]{Olaf Parczyk}

\address{(OP) London School of Economics, Department of Mathematics, London, WC2A 2AE, UK.}
\email{o.parczyk@lse.ac.uk}

\thanks{OP was supported by the DFG (Grant PA 3513/1-1)}


\begin{abstract}
Maker-Breaker games are played on a hypergraph $(X,\mathcal{F})$, where $\mathcal{F} \subseteq 2^X$ denotes the family of winning sets. Both players alternately claim a predefined amount of edges (called bias) from the board $X$, and Maker wins the game if she is able to occupy any winning set $F \in \mathcal{F}$. These games are well studied when played on the complete graph $K_n$ or on a random graph $G_{n,p}$. In this paper we consider Maker-Breaker games played on randomly perturbed graphs instead. These graphs consist of the union of a deterministic graph $G_\alpha$ with minimum degree at least $\alpha n$ and a binomial random graph $G_{n,p}$. 
Depending on $\alpha$ and Breaker's bias $b$ we determine the order of the threshold probability for winning the Hamiltonicity game and the $k$-connectivity game on $G_{\alpha}\cup G_{n,p}$, and we discuss the $H$-game when $b=1$. Furthermore, we give optimal results for the Waiter-Client versions of all mentioned games.
\end{abstract}

\maketitle


\section{Introduction}

In general, a positional game is a perfect information game played by two players on a hypergraph $(X,\mathcal{F})$. 
Throughout the game both players occupy elements of the {\em board} $X$ according to some predefined rule,
and the winner is determined through the
{\em family of winning sets} $\mathcal{F}$.
Research of the last decades has generated many interesting results in the area of positional games (see e.g.~\cite{BeckBook,HKSS_PosGames}),
where plenty of different types were considered,
including {\em Maker-Breaker games} (see e.g.~ \cite{BL2000,FKN2015,GS_MinDegree,Kriv_Ham,liebenau2020threshold}),
{\em Waiter-Client games} (see e.g.~ \cite{BHKL2016,DK2016,HKT2017}),
and many more.
 
In this paper we are mainly interested in 
positional games where $X$ is the edge set of some given
graph $G$, and where $\mathcal{F}$ is the family of all Hamilton cycles of $G$,
all $k$-vertex-connected spanning subgraphs of $G$,
or all copies of some fixed graph $H$ in $G$, respectively. 
Before we will discuss such games 
further, let us first recall what is known about the appearance of these structures
in dense graphs, random graphs and randomly perturbed graphs. 

\subsection{Dense graphs} For a given graph $G$
let $\delta(G)$ denote its minimum degree.
When we consider all graphs $G$ on $n$ vertices, it is quite natural to ask how large $\delta(G)$ needs to be
in order to guarantee the existence of a given
structure. Dirac~\cite{D_HAM} proved that any graph on $n \ge 3$ vertices with minimum degree at least $\frac{n}{2}$ is Hamiltonian, with the bound being sharp, since the vertex disjoint union of two cliques of sizes $\lfloor \frac{n}{2} \rfloor$ and $\lceil \frac{n}{2} \rceil$
does not contain a Hamilton cycle.
Moreover, it is an easy observation, that for any positive integer $k$, any $n$-vertex graph with minimum degree at least $\frac{n+k-1}{2}$ is $k$-vertex-connected.
Again this bound is seen to be sharp
by looking at the following example:
take the vertex disjoint union of
three cliques of sizes $k-1$, $\lfloor \frac{n-k+1}{2} \rfloor$ and $\lceil \frac{n-k+1}{2} \rceil$, respectively,
and add all edges between the first clique
and both of the other cliques.
For the containment of a fixed graph $H$ a sufficient minimum degree condition can be derived from Tur\'an's Theorem~\cite{Turan} with regularity, or from the following theorem of Erd\H{o}s and Stone~\cite{ES_extremal}:
	for any fixed graph $H$ with chromatic number $r>2$, any $n$-vertex graph with minimum degree $\left(\frac{r-2}{r-1} +o(1)\right) n$ contains a copy of $H$ provided that $n$ is large.
	This bound is seen to be sharp by considering
a Tur\'an graph on $n$ vertices with $r-1$ vertex classes, i.e.~an $(r-1)$-partite graph with 
all class sizes differing at most by 1.
	For bipartite graphs $H$ any linear minimum degree is sufficient.

\subsection{Random graphs}

Let $G_{n,p}$ be the binomial random graph model, i.e.~the model of $n$-vertex random graphs, where each edge is present with probability $p$ independently of all others.
For simplicity, we will often write $\gnp$ also for a graph $G \sim \gnp$ drawn according to this distribution.
Given some graph property $\mathcal{P}$,
we are interested in a probability function $\hat{p} = \hat{p}(\mathcal{P})$ such that
the following holds: when $p = \omega(\hat{p})$, then $G_{n,p}$ satisfies $\mathcal{P}$ asymptotically almost surely (a.a.s.), i.e.~with probability tending to one as $n$ tends to infinity.
When additionally it holds that $G_{n,p}$ a.a.s.~does not have property $\mathcal{P}$
when $p=o(\hat{p})$, we call $\hat{p}$ a \emph{threshold probability} for the property
$\mathcal{P}$. The existence of such a threshold is known for all monotone graph properties $\mathcal{P}$~\cite{Bollobas}.
If the same conclusion even holds with $p \ge (1+\eps) \hat{p}$ and $p \le (1-\eps) \hat{p}$ for any constant $\eps > 0$, then we call $\hat{p}$ a \emph{sharp threshold probability} for the property $\mathcal{P}$.

Pos\'a~\cite{P_HAM} and Korshunov~\cite{K_HAM} independently proved that $G_{n,p}$ has a sharp threshold for containing a Hamilton cycle at $\frac{\log n}{n}$.
In fact, there are even more precise results known, e.g.~it holds a.a.s. that at the point when $G_{n,p}$ has minimum degree two it is already Hamiltonian~\cite{B_HAM}.
Note that the existence of a Hamilton cycle immediately implies $2$-vertex-connectivity. More generally, it follows from an early work of Erd\H{o}s and R\'enyi~\cite{ER_connected}, that for any fixed positive integer $k$ the random graph $G_{n,p}$ has a sharp threshold for being
$k$-vertex-connected at $\frac{\log n}{n}$.
Lastly, consider the containment of a fixed graph $H$.
Let $d(H)=\frac{e(H)}{v(H)}$ and denote by
	\[m(H)= \max_{H' \subseteq H, v(H)>0} d(H)\]
	the \emph{maximum subgraph density of $H$}. 
Bollob\'as~\cite{Bollobas} proved that 
the threshold probability for
$G_{n,p}$ containing a copy of $H$ lies at $n^{-1/m(H)}$.

\subsection{Randomly perturbed graphs}
Note that all of the constructions described above for showing the sharpness of the minimum degree conditions
for dense graphs are 'well-structured' in a certain way: they contain very dense parts (e.g.~large cliques) and very sparse parts (e.g.~large independent sets)
at the same time,
and thus they are far away from the 'typical shape' of a random graph. For that reason, one may hope for improving the minimum degree conditions, that we already discussed for dense graphs,
when we allow to extend the given graph slightly by 
additionally sprinkling a few random edges on top of it.
To be more precise, fix any real $\alpha>0$ and let 
$G_\alpha$ be a sequence of $n$-vertex graphs with minimum degree at least $\alpha n$. Our goal is to consider
the model $G_\alpha \cup G_{n,p}$ which was first studied by Bohman, Frieze, and Martin~\cite{BFM_HAM}.
In the following we slightly abuse notation and also write $G_\alpha \cup G_{n,p}$ for $G_\alpha \cup G$ with $G \sim \gnp$.
Similarly to the discussion of the binomial random graph, we can look for $\hat{p}=\hat{p}(\mathcal{P})$
such that the following holds:
\begin{itemize}
\item with $p = \omega(\hat{p})$ it is true that for any sequence $G_\alpha$ the desired property $\mathcal{P}$ holds a.a.s.~in $G_\alpha \cup G_{n,p}$,
\item with $p = o(\hat{p})$ there exists a sequence
$G_\alpha$ such that the desired property $\mathcal{P}$ a.a.s.~does not hold in $G_\alpha \cup G_{n,p}$.
\end{itemize}

Bohman, Frieze, and Martin~\cite{BFM_HAM}
showed that for any $\alpha>0$ there exists a large enough constant $C$ such that it is sufficient to take $p \ge \frac{C}{n}$ to ensure that a.a.s.~$G_\alpha \cup G_{n,p}$ is Hamiltonian.	 
	This is asymptotically optimal, because with $G_\alpha=K_{\alpha n,(1-\alpha n)}$ we can use at most $2 \alpha n$ edges of $G_\alpha$ for a Hamilton cycle and when $p=o\left(\frac{1}{n}\right)$ we a.a.s.~have that $G_{n,p}$
	only adds $o(n)$ edges.
On one hand, this result shows that any minimum degree linear in $n$ suffices to ensure that a.a.s. a Hamilton cycle appears when a sparse random graph with $\omega(n)$ edges is added. On the other hand, we see that starting with a dense graph $G_{\alpha}$ we need an edge probability $p$ for the likely Hamiltonicity of $G_{\alpha}\cup G_{n,p}$ which is a $\ln$-factor smaller than the corresponding threshold for the binomial random graph. 

Among other things, it was shown by Bohman, Frieze, Krivelevich, and Martin~\cite{BFKM2004} that for any fixed positive integer $k$ the randomly perturbed graph $G_\alpha \cup G_{n,p}$ is $k$-vertex-connected when $p = \omega\left(\frac{1}{n^2}\right)$; results are also given for the case when $k$ is not a constant but depending on $n$.

To discuss the results for a fixed graph $H$ we first need to define the \emph{$r$-partite density} of $H$, as it was introduced by Krivelevich, Sudakov, and Tetali~\cite{KST_SmoothedAnalysis}.
	Let
	\[m^{(r)}(H):= \min_{V(H) = \cup_iP_i} \max_i m(H[P_i]),\]
	where we minimise over all possible partitions of the vertex set of $H$ into $r$ parts $P_1,\dots,P_r$.	
	Then, for any integer $r$, any $\alpha \in ( \frac{r-2}{r-1},\frac{r-1}{r} ]$, and any fixed graph $H$ with $m^{(r)}(H)>0$, the randomly perturbed graph $G_\alpha \cup G_{n,p}$ a.a.s.~contains a copy of $H$ provided that $p = \omega\left(n^{-1/m^{(r)}(H)}\right)$~\cite{KST_SmoothedAnalysis}.
	Note that when $m^{(r)}(H)=0$, this implies, that $\chi(H)\leq r$, and thus, due to Erd\H{o}s and Stone~\cite{ES_extremal}, there already exists a copy of $H$ in $G_\alpha$ alone.
	To see that this is optimal it suffices to consider a complete $r$-partite graph on $n$ vertices with roughly equal parts and to note that in one of the parts there has to be a copy of some $H' \subseteq H$ with $m(H') \ge m^{(r)}(H)$ fully composed of edges of $G_{n,p}$, which a.a.s.~does not appear when $p = o\left(n^{-1/m^{(r)}(H)}\right)$.

\subsection{Maker-Breaker games} In our paper we will mainly focus on Maker-Breaker games.
A \linebreak $(1:b)$ Maker-Breaker game (also referred to as $b$-{\em biased Maker-Breaker game}) on some hypergraph $(X,\mathcal{F})$ is played as follows.
Maker and Breaker alternate in taking $1$ and $b$ unclaimed elements of $X$, respectively (except for maybe the last round where Breaker could take less elements), and the game is Maker's win if she fully claims an element of $\mathcal{F}$; otherwise Breaker wins. Since increasing the bias $b$ is never a disadvantage for Breaker, it can easily be shown that for any hypergraph $(X,\mathcal{F})$ there must be a 
{\em threshold bias} $b_{\mathcal{F}}$ such that
Maker wins if and only if $b<b_{\mathcal{F}}$ (see e.g.~\cite{HKSS_PosGames}).

In our setting $X$ will be the edge set of some
host graph $G$ and $\mathcal{F}$ will consist of all Hamilton cycles, $k$-vertex-connected subgraphs, or copies of $H$ in $G$, respectively.
In the literature, $G$ is usually chosen to be the complete graph $K_n$, or (more recently) the binomial random graph $G_{n,p}$ (see e.g.~\cite{HKSS2009}, \cite{SS2005}).
But already when playing on $K_n$ the intuition from random graphs plays an important role, since Maker's subgraph can exhibit similar properties as a subgraph where each edge of $X=E(K_n)$ is taken at random with probability $\frac{1}{b+1}$.
Moreover, for many (but not all) families $\mathcal{F}$ 
the threshold bias in the $(1:b)$ game on $X=E(K_n)$
is tightly linked to the threshold probability for $G_{n,p}$ to be such that Maker wins the $(1:1)$ game
with winning sets $\mathcal{F}$.

As a first example, let us look at the Maker-Breaker 
{\em Hamiltonicity game},
where the family of winning sets $\mathcal{F}=\mathcal{H}(G)$ consists of all Hamilton cycles
of the host graph $G$ which the game is played on.
When playing on $G=K_n$, it was shown by Krivelevich~\cite{Kriv_Ham} that the threshold bias
$b_{\mathcal{H}(K_n)}$
is of size $(1+o(1))\frac{n}{\ln n}$. 
One interesting fact about this result is that it supports the {\em probabilistic intuition}:
if Maker and Breaker would play the $b$-biased game fully at random, Maker's graph would behave similarly
to $G_{n,p}$ with edge probability $p=\frac{1}{b+1}$, which asymptotically equals the threshold $(1+o(1))\frac{\ln n}{n}$
for containing a Hamilton cycle when $b=b_{\mathcal{H}(K_n)}$.
Furthermore, as shown by Krivelevich, Lee, and Sudakov~\cite{KLS_dirac}, Maker can even win the $b$-biased Hamiltonicity game on a graph $G$ with $\delta(G)\geq \frac{n}{2}$ provided that $b \le \frac{cn}{\ln n}$ for some small enough constant $c>0$.
On the other hand~\cite{HKSS2009}, $G_{n,p}$ is a.a.s.~such that Maker wins the $(1:1)$ Maker-Breaker Hamiltonicity game provided that $p \ge (1+\eps) \frac{\ln n}{n}$ for any fixed $\eps>0$. This coincides with the appearance of a Hamilton cycle in $G_{n,p}$ as discussed above, and this threshold probability
happens to be asymptotically the inverse 
of the threshold bias $b_{\mathcal{H}(K_n)}$.
Moreover, for $p=\omega\left( \frac{\ln n}{n} \right)$
it has been shown that the threshold bias for winning
the Hamiltonicity game on $G_{n,p}$ is a.a.s.~of size
$\Theta\left(\frac{np}{\ln n}\right)$~\cite{FGKN_BiasedGames}, giving a linear dependency between the edge probability $p$
and the threshold bias $b_{\mathcal{H}(G_{n,p})}$.

Next let us consider the {\em $k$-vertex-connectivity game} where $\mathcal{F}=\mathcal{C}_k(G)$
consists of all spanning $k$-vertex-connected subgraphs of the host graph $G$ which the game is played on.
As observed by Krivelevich~\cite{Kriv_Ham},
his approach for the Hamiltonicity game on $K_n$
can be modified to show that 
$b_{\mathcal{C}_k(K_n)}=(1+o(1))\frac{n}{\ln n}$, thus again providing an example supporting the probabilistic intuition. Additionally and similarly as above, Maker wins the $(1:1)$ Maker-Breaker $k$-vertex-connectivity game on $G_{n,p}$ provided that $p \ge (1+\eps) \frac{\ln n}{n}$ for any fixed $\eps>0$~\cite{BFHK_MBhitting}; and
for $p=\omega\left( \frac{\ln n}{n} \right)$, the threshold bias for winning
the $k$-vertex-connectivity game on $G_{n,p}$ 
is a.a.s.~of size $\Theta\left(\frac{np}{\ln n}\right)$~\cite{FGKN_BiasedGames}.

Finally, for any fixed graph $H$, let us turn to the Maker-Breaker {\em $H$-game}, where
$\mathcal{F}=\mathcal{F}_H(G)$
consists of all copies of $H$ that are contained in the host graph $G$ which the game is played on.
For any graph $H'$ on at least 3 vertices, let $d_2(H')=\frac{e(H')-1}{v(H')-2}$, and denote by
\[m_2(H)=\max_{H' \subseteq H, v(H) >2} d_2(H')\]
the \emph{maximum $2$-density} of $H$, where (for convenience) we let $m_2(P_1)=d_2(P_1)=1$ and $m_2(P_0)=d_2(P_0)=0$ with $P_1$ being a single edge and $P_0$ being an isolated vertex.
For the $H$-game on $K_n$,
Bednarska and \L uczak~\cite{BL2000} proved that
$b_{\mathcal{F}_H(K_n)} = \Theta\left( n^{1/m_2(H)} \right)$, provided that $H$ contains at least $3$ non-isolated vertices.
Our proofs for the results discussed in the next section will also imply the following for the $(1:1)$ Maker-Breaker $H$-game on graphs with large minimum degree.
\begin{thm}
	\label{thm:hgame0}
	Let $r \ge 2$ be an integer, $\alpha \in ( \frac{r-2}{r-1},\frac{r-1}{r} ]$, let $H$ be a fixed graph with chromatic number $r$, and let $G_\alpha$ be a graph on $n$ vertices with $\delta(G_{\alpha}) \geq \alpha n$.
	Then provided that $n$ is large enough the following holds:
	playing a $(1:1)$ Maker-Breaker game on the edges 	
	of $G_{\alpha}$, Maker has a strategy to obtain a copy of $H$.
\end{thm}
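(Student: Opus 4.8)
The plan is to reduce the statement to a single Maker--Breaker game played on a complete multipartite graph. Since $H$ has chromatic number $r$, it is a subgraph of the complete $r$-partite graph $F_0:=K_r(h)$ with all parts of size $h:=v(H)$; hence it suffices to let Maker claim a copy of $F_0$. Next, as $\alpha>\frac{r-2}{r-1}$, the minimum degree of $G_\alpha$ exceeds the Tur\'an threshold for $F_0$ by a linear amount, so the minimum-degree form of the Erd\H{o}s--Stone theorem~\cite{ES_extremal} (equivalently, a supersaturation argument) shows that, once $n$ is large, $G_\alpha$ contains a copy of $K_r(M)$ with all parts of size $M$, where $M=M(H)$ is an arbitrarily large constant fixed in advance. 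Maker will play the entire game on the edge set of one fixed such copy of $K_r(M)$. Thus everything comes down to showing: \emph{for $M$ large enough, Maker wins the $(1:1)$ $F_0$-game on $K_r(M)$}.

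For this last game I would use a random strategy in the spirit of Bednarska and \L uczak~\cite{BL2000}: in each round Maker claims an edge of $K_r(M)$ chosen uniformly at random among those still available, doing so for the first $\eps\, N^{\,2-1/m_2(F_0)}$ rounds, where $N:=rM$ and $\eps=\eps(H)>0$ is a small constant. The board $K_r(M)$ is dense and vertex-transitive; it contains $\Theta(N^{v(F_0)})$ copies of $F_0$ (a bijection between the classes of $F_0$ and the parts of $K_r(M)$, followed by a choice of $h$ vertices in each part) and only $O(N^{v(F')})$ copies of each subgraph $F'\subseteq F_0$, with $m_2(F')\le m_2(F_0)$. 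These are exactly the inputs of the Bednarska--\L uczak second-moment computation, now run with $K_r(M)$ in place of $K_N$: it yields that, with probability bounded below by a positive constant, Maker's graph still contains a copy of $F_0$ after the removal of all of Breaker's edges, and the standard amplification lemma upgrades this to a deterministic winning strategy. Equivalently, the threshold bias of the $F_0$-game on $K_r(M)$ is $\Theta\!\big(M^{1/m_2(F_0)}\big)=\omega(1)$, so it exceeds $1$ once $M\ge M_0(H)$, which is all we need.

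Combining the two reductions with this game completes the argument; note that only $\alpha>\frac{r-2}{r-1}$ is used, the bound $\alpha\le\frac{r-1}{r}$ playing no role. The single step that is more than bookkeeping is re-running the Bednarska--\L uczak random-strategy analysis on the host $K_r(M)$ rather than on $K_n$; this is routine because $K_r(M)$ is dense, (near-)regular and supersaturated with copies of $F_0$, but it is where the actual work sits. Alternatively one may skip the passage through $K_r(M)$ and run the same random strategy directly on $G_\alpha$, using supersaturation to produce $\Omega(n^{v(H)})$ copies of $H$ in $G_\alpha$ together with the trivial bound $O(n^{v(H')})$ on the number of copies of each $H'\subseteq H$; the detour through $K_r(M)$ is preferable only in that it makes the host graph canonical and dovetails with the proof of the general $H$-game result announced in the next section.
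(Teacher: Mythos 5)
Your argument is correct, but it takes a genuinely different route from the paper's. The paper deduces Theorem~\ref{thm:hgame0} as the degenerate case $m_2^{(r)}(H)=0$ of its proof of Theorem~\ref{thm:H-game_Maker} (see Remark~\ref{rem:r-chromatic}): Lemma~\ref{lem:reg-turan} extracts from $G_\alpha$ linear-sized sets $V_1,\dots,V_r$ with all pairs $\eps$-regular of density at least $\frac{\delta}{2}$; Maker then plays a discrepancy game on each pair (Lemma~\ref{lem:regularity_Maker}, via Theorem~\ref{thm:balancer}) so that every pair stays regular with half the density inside her own graph; and a copy of $H$ is embedded into Maker's graph class by class using the slicing lemma and dependent random choice (Lemma~\ref{lem:reg_drc}). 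You instead invoke Erd\H{o}s--Stone to find one copy of $K_r(M)$ with $M=M(H)$ a large constant, restrict Maker to that constant-size board (legitimate by the trick of fake moves, Lemma~\ref{lem:fake_moves_MB}), and win the $K_r(v(H))$-game there by the Bednarska--\L uczak random strategy. The one step you must supply -- and you correctly isolate it -- is that the Bednarska--\L uczak deletion/second-moment computation transfers from $K_N$ to $K_r(M)$; it does, since that computation only uses that the board has $\Theta(N^2)$ edges, $\Theta(N^{v(F_0)})$ copies of the target and $O(N^{v(F')})$ copies of each subgraph $F'$, together with $m_2(F')\le m_2(F_0)$, all of which hold here (and $F_0\supseteq C_4$, so the cycle hypothesis of their Lemma~4 is met). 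Your version is more self-contained for this particular statement and pushes all regularity into the proof of Erd\H{o}s--Stone; the paper's version costs nothing extra because the machinery is built anyway for Theorem~\ref{thm:H-game_Maker}, and it leaves Maker with a quantitatively regular $r$-partite graph, which is what the perturbed $H$-game needs. As an aside, on the constant board $K_r(M)$ you could bypass Bednarska--\L uczak entirely via strategy stealing plus the multipartite Ramsey theorem (every two-colouring of $E(K_r(M))$ contains a monochromatic $K_r(h)$ once $M$ is large), making the final step completely finite.
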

	
Now, let $H$ be a graph for which there exists $H' \subseteq H$ such that $d_2(H')=m_2(H)$, $d_2(H'')<d_2(H')$ for all $H''\subsetneq H'$, and $H'$ is not a tree or triangle. Then
the threshold probability $p_{\mathcal{F}_H}$ for winning the Maker-Breaker $H$-game on $G_{n,p}$ turns out to be of the order
$n^{-1/m_2(H)}$~\cite{NSS2016}, and hence, similar to the discussion of the Hamiltonicity game and the $k$-vertex-connectivity game, it is asymptotically the inverse of the order of
the threshold bias $b_{\mathcal{F}_H}(K_n)$
for the corresponding game on $K_n$.
However, this last observation is not true for general $H$. For instance, when $H=K_3$, the threshold probability $p_{\mathcal{F}_{K_3}}$ is of order $n^{-5/9}$~\cite{SS2005}. Until today, it is still an open problem to find the threshold probability $p_{\mathcal{F}_H}$ for every $H$.

\subsection{Our results - Maker-Breaker games on randomly perturbed graphs}

The main goal of this paper is to study Maker-Breaker games on randomly perturbed graphs where the family of winning sets $\mathcal{F}$ consists of Hamilton cycles, $k$-vertex-connected spanning subgraphs or copies of a fixed graph $H$, respectively.
Depending on $\alpha$ and Breaker's bias $b$, we aim to determine the order of the threshold probability for winning
the $(1:b)$ Maker-Breaker game on $G_\alpha \cup G_{n,p}$ 
with winning sets $\mathcal{F}$. For the case when $b=1$, one main question will be, whether the appearance of the structures from $\mathcal{F}$ in $G_\alpha \cup G_{n,p}$ is roughly sufficient for a Maker's win in the corresponding game.

\medskip 

For the Hamiltonicity game we prove the following.

\begin{thm}[Biased MB Hamiltonicity game]
	\label{thm:Ham}
	For every real $\alpha>0$ there exist constants $c,C>0$ such that the following holds for large enough integers $n$. Let $G_{\alpha}$ be a graph on $n$ vertices with 
	$\delta(G_{\alpha})\geq \alpha n$, let 
	$b\leq \frac{cn}{\ln n}$ be an integer, and let $p \geq \frac{Cb}{n}$. Then a.a.s. the following holds: playing a $(1:b)$ Maker-Breaker game on the edges of $G_{\alpha}\cup G_{n,p}$, Maker has a strategy to claim a Hamilton cycle.
\end{thm}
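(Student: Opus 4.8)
The plan is to combine a resilient absorbing/boosting structure built from the random edges with the fact that a dense graph $G_\alpha$ already contains many edge-disjoint "connectors", then invoke a biased Maker-Breaker template. First I would set up Maker's strategy as a two-stage process. In the first stage, Maker plays to build a \emph{good expander} on the random part $G_{n,p}$: since $p \geq Cb/n$, the random graph $G_{n,p}$ a.a.s.\ has $\Omega(nb)$ edges and, more importantly, is such that even after Breaker claims $b$ edges for each of Maker's, Maker can occupy a subgraph $E_1$ that is a sparse expander — concretely, an expander in the sense used by Krivelevich~\cite{Kriv_Ham} for the Hamiltonicity game on $G_{n,p}$: every small set expands by a factor $\geq 2$ (or $\geq 3$), there is no small "dense spot", and the graph is connected after deleting few vertices. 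The key enabling estimate is that $G_{n,p}$ with $p = \omega((\ln n)/n)$ locally looks like $G_{n,p'}$ with $p' = p/(b+1) = \omega(1/n)$ after the random thinning induced by the game; since $b \leq cn/\ln n$ we have $p' = \omega(1/n)$, which is exactly the regime where such an expander a.a.s.\ exists and (by a box-game / potential argument) is claimable by Maker against bias $b$.

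Next I would use $G_\alpha$ to turn the expander into something Hamiltonian. The point is that, unlike on $G_{n,p}$ alone where Maker needs $p \geq (1+\eps)(\ln n)/n$ to get long enough paths through low-degree vertices, here the minimum-degree-$\alpha n$ graph $G_\alpha$ supplies, for every vertex $v$, a linear-sized set of available edges at $v$, and these edge sets can be organized so that Maker can claim, for each $v$, at least a constant number of $G_\alpha$-edges incident to $v$ while Breaker (bias $b \leq cn/\ln n$) cannot block more than a small fraction — this is essentially the Krivelevich-Lee-Sudakov argument~\cite{KLS_dirac} for the biased Hamiltonicity game on Dirac graphs, which already handles $\delta(G) \geq n/2$; I would first reduce to that by noting that if $\alpha \geq 1/2$ we are done directly by~\cite{KLS_dirac}, and if $\alpha < 1/2$ we must genuinely use the random edges for connectivity/expansion and the $G_\alpha$ edges only for the "boosting" step. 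Combining: let Maker's final graph be $E_1 \cup E_2$ where $E_1 \subseteq E(G_{n,p})$ is the expander from stage one and $E_2 \subseteq E(G_\alpha)$ is a set guaranteeing every vertex has a handful of Maker-edges and that certain "rotation endpoints" can always be connected; a standard rotation-extension (Pósa) argument then shows $E_1 \cup E_2$ is Hamiltonian. To make the two stages cohabit, I would interleave them: on odd moves Maker advances the expander on $G_{n,p}$, on even moves she advances the booster on $G_\alpha$; since the boards are disjoint, Breaker's bias splits between them and a pairing/potential argument keeps both sub-strategies on track, at the cost of halving the constants $c, C$.

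The main obstacle, I expect, is making the expander on $G_{n,p}$ \emph{biased-robust} at the right probability scale: we are claiming an expander against bias $b$ using a random graph with $p$ only $\Theta(b/n)$, i.e.\ average degree $\Theta(b)$, so Maker gets only $\Theta(1)$-degree-worth of edges per vertex after thinning — this is extremely tight, and the usual Maker-Breaker expander-building results (e.g.\ from~\cite{HKSS2009, FGKN_BiasedGames}) are stated for $K_n$ or for $G_{n,p}$ with more room. I would handle this by not asking $E_1$ alone to be a good enough expander for Pósa, but only to be a connected "skeleton" with expansion of small sets and no dense spots, deferring all the degree-boosting to $E_2$ on $G_\alpha$; the precise expansion parameters needed are then $(k, 2)$-expansion for $k$ up to roughly $n/(C'\ln n)$, which can be extracted from $G_{n,p}$ with $p = Cb/n$ via a first-moment computation over "bad" sets and a Maker-Breaker "expander game" whose winnability against bias $b$ follows from an Erdős–Selfridge / potential argument because the number of obstructions is at most $\binom{n}{k}^2 \cdot 2^{O(k \ln(1/\text{something}))}$, small enough when $b \leq cn/\ln n$. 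The remaining steps — verifying that the thinned random graph has the required edge count with room to spare, checking the rotation-extension lemma applies to $E_1 \cup E_2$, and bookkeeping the constants through the interleaving — are routine, so I would state them as claims and only sketch the computations. Finally, Theorem~\ref{thm:hgame0}'s proof technique (partitioning according to $\alpha \in (\frac{r-2}{r-1}, \frac{r-1}{r}]$) is not needed here since Hamiltonicity corresponds morally to the "$r = 2$" / linear-minimum-degree regime, so any $\alpha > 0$ suffices, matching the statement.
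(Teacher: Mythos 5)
Your division of labour between the two graphs is reversed, and the step you lean on most heavily --- Maker building a (spanning, small-set-expanding, connected) expander inside $G_{n,p}$ against bias $b$ when $p=\Theta(b/n)$ --- is not merely ``extremely tight'' but false. With $p=Cb/n$ the random graph has average degree $\Theta(b)$; already for $b=1$ it a.a.s.\ contains $\Theta(n e^{-C})$ isolated vertices, so no subgraph of $G_{n,p}$ expands singletons, and $G_{n,p}$ itself is a.a.s.\ disconnected for all $p=o(\ln n/n)$. Even for larger $b$ the plan fails in the game: each vertex has only $\Theta(np)=\Theta(b)$ incident random edges, which is $o(b\ln n)$, so by the Chv\'atal--Erd\H{o}s/box-game degree argument Breaker can isolate a vertex of $G_{n,p}$ in Maker's random-edge subgraph. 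Your fallback (``only a connected skeleton with expansion of small sets'') does not escape this, since connectivity and expansion of small sets are exactly the properties that die with isolated and low-degree vertices. The only thing the random part can reliably deliver against bias $b$ at this probability is a \emph{large-set} property: between any two disjoint sets of size $\beta n$ there are $\ge \tfrac12 p(\beta n)^2=\Omega(Cbn)$ random edges, enough for Maker to claim one in each such pair via the transversal form of Beck's criterion.

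The paper's proof assigns the roles the other way around. All small-set expansion is extracted from the \emph{dense} graph $G_\alpha$: for $b\le n^{0.49}$, Maker uses the MinBox game on a $4$-partition of $G_\alpha$ to get degree $\ge \alpha n/(200b)$ at every vertex, plus a Beck-criterion transversal argument to guarantee that every set of size $n^{0.5}$ has neighbourhood $\ge 2\delta n$; combined, every set of size $\le \delta n^{1-\delta}$ expands by a factor $n^\delta$. The random edges then supply exactly the large-set connection property above, and the two properties together give Hamiltonicity via the sufficient condition of Hefetz, Krivelevich, and Szab\'o (Theorem~2.5 in~\cite{HKS2009}). For $b\ge \ln n$ the paper again builds an $\varepsilon n$-expander \emph{on $G_\alpha$} (via the minimum-degree game of Gebauer--Szab\'o and Krivelevich's random-choice trick), uses $G_{n,p}$ only to join large sets, and finishes by claiming boosters from $G_{n,p}$. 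So the architectural idea you need is: dense graph for expansion and degrees, random graph for long-range edges and boosters --- not the reverse. As written, your Stage~I cannot be carried out, and the rest of the argument has nothing to stand on.
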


Note that the bound on $b$ is optimal 
up to the constant factor,
since for $b\geq (1+\eps)\frac{n}{\ln n}$, Breaker can
isolate a vertex on any graph having $n$ vertices~\cite{CE1978}, and hence he wins the Hamiltonicity game.
Also, for $\alpha\in (0,\frac{1}{2})$, the bound on $p$ is optimal up to the constant factor.
To see this, it is enough to show that for each $\alpha \in (0,\frac{1}{2})$ there exists a sequence of graphs $G_\alpha$ such that Breaker a.a.s.~wins the Hamiltonicity game on $G_{\alpha}\cup G_{n,p}$ for $p \leq \frac{(1 - 2\alpha)b}{2(1-\alpha)^2n}$. For this, consider $G_\alpha$ to be a complete bipartite graph $A \cup B$ with $|A| = \alpha n$ and $|B| = (1 - \alpha)n$. Now, every Hamilton cycle in $G_{\alpha} \cup G_{n,p}$ needs to contain at least $(1-2\alpha)n$ edges within $B$. However, a.a.s.~$G_{n,p}$ has less than $\frac{(1-2\alpha)bn}{2}$ edges within $B$, and hence, Breaker can ensure that Maker cannot claim $(1-2\alpha)n$ of these edges by occupying $b$ of these edges for himself in each round. Note that in this case we obtain a linear dependency between Breaker's bias $b$
and the threshold probability $p$ for winning the game,
analogously to the Hamiltonicity game on $G_{n,p}$.
In the remaining case, when $\alpha \geq \frac{1}{2}$, we can actually choose $p=0$ and $b\leq \frac{cn}{\ln n}$ 
by the result of Krivelevich, Lee, and Sudakov~\cite{KLS_dirac}.

Moreover, note that the above theorem strengthens the result of Bohman, Frieze, and Martin~\cite{BFM_HAM}
on the containment of Hamilton cycles.
When $p\geq \frac{C}{n}$ for some large enough constant $C$, the graph $G_{\alpha}\cup G_{n,p}$ a.a.s.~does not
only contain a Hamilton cycle;
instead it is so rich of this structure that
Maker can win the $(1:1)$ Maker-Breaker Hamiltonicity game on it. 

\medskip

For the $k$-vertex-connectivity game we show the following result.

\begin{thm}[Biased MB $k$-vertex-connectivity game]
\label{thm:vertex-connectivity}
For every real $\alpha>0$ and every integer $k\geq 1$ there exist constants $C,c>0$ such that the following holds for large enough integers $n$. Let $G_{\alpha}$ be a graph on $n$ vertices with $\delta(G_{\alpha})\geq \alpha n$, let $b\leq \frac{cn}{\ln n}$ be an integer, and let $p\geq \frac{Cb}{n^2}$. Then with probability at least $1-\exp(-c pn^2)$ the following holds:  playing a $(1:b)$ Maker-Breaker game on the edges of $G_{\alpha}\cup G_{n,p}$, Maker has a strategy to claim a spanning $k$-vertex-connected graph.
\end{thm}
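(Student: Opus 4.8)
The plan is to reduce $k$-vertex-connectivity of Maker's graph to two ``robust'' properties, to obtain the local/expansion property from the dense part $G_\alpha$ essentially via the machinery behind Theorem~\ref{thm:Ham}, and to obtain the remaining ``large-scale'' property from a bounded number of well-chosen random edges. Throughout write $M=M_1\cup M_2$ for Maker's graph, where $M_1$ is built from edges of $G_\alpha$ and $M_2$ from edges of $G_{n,p}$; as these two sub-boards are disjoint, Maker may interleave independent strategies on them. I would use the following elementary criterion: there is a constant $\rho=\rho(\alpha,k)>0$ so that $M$ is $k$-vertex-connected as soon as (i) $|N_M(A)|\ge k$ for every $A$ with $1\le|A|\le\rho n$, and (ii) for every $C$ with $|C|\le k-1$, the graph $M-C$ has at most one component of order at least $\rho n$. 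Indeed (i) forces every component of $M-C$ to have order $>\rho n$ (a smaller one would send all its $M$-edges into $C$), whence (ii) makes $M-C$ connected; so Maker only has to secure (i) and (ii).

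The first, purely deterministic, step is to decompose $G_\alpha$: since $\delta(G_\alpha)\ge\alpha n$, iterating ``split at a minimum separator whenever the vertex-connectivity is sublinear'' should yield a partition $V=D_1\sqcup\dots\sqcup D_s\sqcup R$ in which $s=O_{\alpha,k}(1)$, each $|D_i|=\Omega(n)$ and $\kappa(G_\alpha[D_i])\ge\gamma|D_i|$ for a constant $\gamma=\gamma(\alpha)>0$, and every vertex of $R$ has $\Omega(n)$ neighbours in $G_\alpha$ inside one of the blocks. One checks that along every branch of the recursion the ratio ``minimum degree over order'' strictly increases, so there are only $O_\alpha(1)$ levels, each level multiplies the number of parts by at most $1/\alpha$, and the separators moved into $R$ are small enough that every $D_i$ keeps linear order. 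The point is that each $G_\alpha[D_i]$ is \emph{robustly} connected, so --- and this is the game-theoretic heart --- Maker wins the $(1:b)$ Maker--Breaker $k$-connectivity game on $G_\alpha[D_i]$: since $\kappa(G_\alpha[D_i])=\Omega(|D_i|)\gg b$, the expander-building and connectivity-boosting strategy of \cite{Kriv_Ham} (in the biased, large-minimum-degree form of \cite{KLS_dirac,FGKN_BiasedGames}) lets her claim a spanning $k$-connected subgraph $M_1[D_i]\subseteq G_\alpha[D_i]$. Playing these $s$ games in parallel, and additionally claiming for each $r\in R$ some $k$ vertex-disjoint $G_\alpha$-edges from $r$ into a block where it has $\Omega(n)$ neighbours (a minimum-degree-type sub-game she wins easily since $b\le cn/\ln n$), Maker secures property (i): a set $A$ with $|A|\le\rho n$ either lies in $R$, where $r\in A$ already contributes $k$ attachment-neighbours outside $A$, or meets some $D_i$, where $A\cap D_i$ has $\ge k$ $M_1[D_i]$-neighbours outside $A$ because $M_1[D_i]$ is $k$-connected on $\Omega(n)$ vertices.

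It remains to secure (ii), and this is where $G_{n,p}$ enters, only marginally. Maker reserves her first $k\binom{s}{2}=O_{\alpha,k}(1)$ moves to claim, for each pair $i\ne j$, a set of $k$ vertex-disjoint $D_i$--$D_j$ edges of $G_{n,p}$. This is possible because $p\ge Cb/n^2$ with $C=C(\alpha,k)$ large and $|D_i|,|D_j|=\Omega(n)$ imply that, with probability at least $1-\exp(-\Omega(pn^2))$, there are at least $k\binom{s}{2}(b+1)+2k^2$ edges of $G_{n,p}$ between $D_i$ and $D_j$ --- more than Breaker can destroy in the $O_{\alpha,k}(1)$ rounds Maker spends here --- so Maker can secure her matchings before Breaker can block them; a union bound over the $\binom{s}{2}$ pairs keeps the failure probability at $\exp(-cpn^2)$. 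Then for any $C$ with $|C|\le k-1$: each $M_1[D_i]-(C\cap D_i)$ is connected, any two blocks remain joined by one of their $k$ vertex-disjoint links avoiding $C$, and each $r\in R\setminus C$ keeps one of its $k$ attachment edges; hence $M-C$ is connected and $M$ is $k$-vertex-connected.

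The main obstacle is exactly this game-theoretic core: establishing that Maker wins the biased $k$-connectivity game on every graph with linear vertex-connectivity, which means running the expander-building/boosting strategy against bias $b\approx n/\ln n$ while keeping tight control of the ``dangerous'' vertex sets --- essentially the same technical engine as in the proof of Theorem~\ref{thm:Ham}, suitably adapted to the $k$-connectivity winning sets. Secondary points needing care are that the deterministic decomposition really terminates with boundedly many robustly connected blocks and only small separators removed, and that the minimum-degree sub-game attaching the vertices of $R$ can be run in parallel with the block games (Breaker's effective bias on each sub-board is still $O_\alpha(b)=o(n)$); the probabilistic estimate and the final reassembly should be routine.
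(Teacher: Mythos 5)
Your reduction to the two properties (i) and (ii), your block decomposition (essentially Lemma~\ref{lem:partition}), and your use of $O_{\alpha,k}(1)$ rounds' worth of random $D_i$--$D_j$ edges to form $k$ vertex-disjoint links all match the paper's architecture. But the step you yourself flag as ``the game-theoretic heart'' --- that Maker wins the $(1:b)$ $k$-connectivity game on every graph with linear vertex-connectivity via ``the expander-building and connectivity-boosting strategy of \cite{Kriv_Ham}'' --- is a genuine gap, not merely a technical adaptation. The expander engine (the minimum-degree game) only yields expansion of small sets; turning the resulting $\varepsilon n$-expander into a \emph{connected} graph requires Maker to claim an edge in every cut, and in \cite{Kriv_Ham} and in Theorem~\ref{thm:Ham} this is done by applying Beck's criterion to pairs of linear-sized sets, each of which spans $\Theta(n^2)$ edges (on $K_n$, on $G_{n,p}$, or via the bipartite structure of $G_1$). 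A graph with vertex-connectivity $\gamma |D_i|$ guarantees only that each cut has $\Omega(n)$ edges, and with bias $b\approx n/\ln n$ the naive Beck sum $\sum_F 2^{-|F|/b+1}$ over the $\approx 4^n$ cuts then diverges badly. Nor can you connect the expander's $O(1)$ components at the end of the game: the relevant pair of components may be joined by only $\Theta(n)$ edges, all of which Breaker (who claims $\Theta(n^2/\ln n)$ edges in total) may already own. That last trick works in the Waiter--Client version precisely because Waiter controls which edges are offered; it fails for Maker.

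The paper's resolution is different in kind: on each block it applies the transversal form of Beck's criterion (Corollary~\ref{cor:beck_transversal}) directly to the family of all cuts $\left(S,\,V_i\setminus(S\cup K)\right)$ over all $K$ with $|K|\le k-1$, and the convergence of the sum is rescued not by making the cuts large but by making them \emph{few}: Karger's theorem (Theorem~\ref{thm:karger}) bounds the number of cuts of size less than $rt$ in an $r$-edge-connected graph by $dn^{2t}$, so a cut of size $j$ is one of only $n^{O(j/(\gamma n))}$ cuts, and $n^{O(j/(\gamma n))}\cdot 2^{-j/b_s}$ sums to $o(1)$ for $b\le cn/\ln n$. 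This cut-counting input is the missing idea in your proposal; without it (or an equivalent), the claim that Maker can make each $M_1[D_i]$ $k$-vertex-connected against bias $n/\ln n$ is unsupported. (As a side effect, the paper's route also makes your separate treatment of small sets via property (i) and of the remainder set $R$ unnecessary, since Lemma~\ref{lem:partition} gives a clean partition into robust blocks with no leftover vertices.)
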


Note again that this theorem
strengthens the result on
the $k$-vertex-connectivity of $G_{\alpha}\cup G_{n,p}$
given in~\cite{BFKM2004}, and it also gives a linear dependency between $b$ and the threshold probability $p$.
The bound on $b$ is
optimal up to the constant factor by the same reason as
before.
For optimality regarding $p$, consider $G_{\alpha}$ to be a graph consisting of (roughly) 
$\frac{1}{\alpha}$ vertex disjoint cliques
of size (roughly) $\alpha n$. 
When $p\leq \frac{\eps b}{n^2}$, using Markov's inequality we see that with probability at least $1-\eps$ there are less than $b$ edges in the graph $G_{n,p}$ and therefore Breaker can easily ensure that Maker receives at most 1 such edge. However, adding this edge to $G_{\alpha}$ does not even result in a connected spanning graph and hence Maker cannot occupy such a graph 
in the game on $G_{\alpha}\cup G_{n,p}$.

\medskip

Finally, let us turn to the $H$-game on $G_{\alpha}\cup G_{n,p}$. For the case when $H$ is a clique we obtain the following result.

\begin{thm}[Unbiased MB Clique game]
\label{thm:clique_Maker}
	Let $\gamma>0$, integers $t,r \ge 2$, and $\alpha \in ( \frac{r-2}{r-1},\frac{r-1}{r} ]$ be given.
	Let $G_\alpha$ be an $n$-vertex graph with  
	$\delta(G_{\alpha})\geq \alpha n$, and let
	$p \ge n^{-2/(t+1)+\gamma}$. Then
	a.a.s.~the following holds:
	playing a $(1:1)$ Maker-Breaker game  on the edges 	
	of $G_{\alpha}\cup G_{n,p}$, Maker 
	has a strategy to obtain a copy of $K_{tr}$.
\end{thm}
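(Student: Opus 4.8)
The plan is to produce a large, \emph{well-spread} family of copies of $K_{tr}$ inside $G_\alpha\cup\gnp$ and then let Maker win the $(1:1)$ game on it by the random-play method of Bednarska and \L uczak~\cite{BL2000}. Write $K_{tr}$ as $r$ vertex-disjoint cliques $Q_1,\dots,Q_r$ on $t$ vertices together with all edges between distinct $Q_i$'s; the latter edges form a complete $r$-partite graph $K_r(t)$ with all parts of size $t$. Call any copy of $K_r(t)$ in $G_\alpha$ a \emph{template}. Since $\delta(G_\alpha)\ge\alpha n>\tfrac{r-2}{r-1}n$, the Erd\H{o}s--Stone theorem together with supersaturation (Erd\H{o}s--Simonovits) gives a constant $c_1=c_1(\alpha,r,t)>0$ such that $G_\alpha$ contains at least $c_1 n^{rt}$ templates; moreover this family is \emph{spread} in the sense that every edge, respectively every pair of edges, of $G_\alpha$ lies in at most $O(n^{rt-2})$, respectively $O(n^{rt-4})$, of them, since an edge pins down two of the $rt$ vertices of a template.

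Next, $\gnp$ should supply the missing within-group edges. We may assume $\gamma<\tfrac2{t+1}$, as otherwise $p$ is so large that $G_\alpha\cup\gnp$ is essentially complete and the claim is immediate; put $\theta:=\tfrac2{t+1}-\gamma\in(0,\tfrac2{t+1})$, so $p\ge n^{-\theta}$. Call a template $T$ on parts $A_1,\dots,A_r$ \emph{completable} if all $r\binom{t}{2}$ edges of $\bigcup_i\binom{A_i}{2}$ — which are exactly the edges missing from the $K_r(t)$ we started with — belong to $\gnp$; this has probability $p^{r\binom{t}{2}}$, and these events are mutually independent over edge-disjoint templates. Hence the expected number of completable templates is at least $(1-o(1))c_1n^{rt}p^{r\binom{t}{2}}=\Omega(n^{\rho})$, where $\rho:=rt\bigl(1-\tfrac{t-1}{2}\theta\bigr)>0$. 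A second moment computation — whose variance is dominated by pairs of templates sharing an edge, and is therefore controlled by the spread bounds above — shows that a.a.s.\ there are indeed $\Omega(n^{\rho})$ completable templates; a routine deletion-method clean-up, discarding the few $\gnp$-edges that happen to lie in an atypically large number of them, then leaves a subfamily $\mathcal T$ of $\Omega(n^{\rho})$ completable templates in which every edge of $G_\alpha\cup\gnp$ lies in at most $n^{\rho-\Omega(1)}$ members. Adjoining to each $T\in\mathcal T$ its within-group edges turns it into a copy of $K_{tr}$ in $G_\alpha\cup\gnp$.

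It remains to let Maker claim one such copy. This is exactly the situation handled by the random-play method of~\cite{BL2000}: when a board carries a family $\mathcal T$ of winning sets, all of bounded size, with $|\mathcal T|\ge n^{\rho}$ for a constant $\rho>0$ and with every board element in at most $n^{\rho-\Omega(1)}$ members, Maker wins the $(1:1)$ game by playing a uniformly random available element in each round — a first/second moment analysis of the number of winning sets claimed entirely by $\maker$, in which the spread of $\mathcal T$ bounds the variance, shows this number to be positive with probability $1-o(1)$ whatever $\breaker$ does. Since a finite perfect-information game is determined, a randomized strategy that beats every $\breaker$ strategy with positive probability yields a deterministic winning strategy for $\maker$. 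Thus a.a.s.\ $\maker$ has a strategy to claim a copy of $K_{tr}$ in $G_\alpha\cup\gnp$. (The same scheme with $p=0$, taking the parts $A_i$ to be the colour classes of a proper $r$-colouring of $H$ so that no within-part edges are needed, reproves Theorem~\ref{thm:hgame0}.)

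The step I expect to be the main obstacle is establishing the spread of $\mathcal T$: it is not enough that many completable templates exist, they must be distributed over the board so that Breaker's (roughly half of the) edges cannot meet all of them. The delicate point is the $\gnp$-edges, since a single such edge can a priori be a within-group edge of as many as $\sim n^{t-2}$ templates; one has to show that only $o(|\mathcal T|)$ completable templates pass through atypically popular $\gnp$-edges and delete those edges while keeping $\Omega(n^{\rho})$ of the family intact. By contrast the supersaturation count in $G_\alpha$ and the first/second moment estimates over $\gnp$ are comparatively routine.
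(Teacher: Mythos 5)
Your construction of a large, spread family of completable templates is fine (and the supersaturation/second-moment/deletion computations you flag as routine are indeed routine), but the final game-theoretic step is where the proof genuinely breaks. The ``general principle'' you invoke --- that a family of $n^{\rho}$ bounded-size winning sets in which every board element lies in at most $n^{\rho-\Omega(1)}$ members is won by uniform random play --- is false, and it fails here for a concrete reason: the winning sets are forced to use edges of a sub-board that is a vanishing fraction of the whole board. The board has $\Theta(n^2)$ elements (dominated by $E(G_\alpha)$), while $e(G_{n,p})=\Theta(n^2p)=o(n^2)$. If Breaker simply claims a free $G_{n,p}$-edge in every round, then Maker, playing uniformly at random over the whole board, picks a $G_{n,p}$-edge only with probability $\approx p$ per round and ends up with only $\Theta(n^2p^2)$ of them. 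For $t\ge 4$ one has $p^2=n^{-4/(t+1)+2\gamma}\ll n^{-2/(t-1)}$, so Maker's $G_{n,p}$-edges a.a.s.\ contain no copy of $K_t$ at all, let alone one sitting inside a template whose cross-edges she also owns. So there is an explicit Breaker strategy against which your randomized Maker strategy loses a.a.s., and the determinacy argument cannot be run. (This is also why the Bednarska--\L uczak method in~\cite{BL2000} comes packaged with a \emph{resilience} statement for $G(n,M)$ rather than a bare second moment on fully-claimed winning sets: Breaker's failures are adaptive, and over $\Theta(n^2)$ rounds they are a constant fraction of Maker's attempts.) The obstacle is therefore not, as you suspected, the spread of $\mathcal T$, but the mismatch between uniform play and the sparse sub-board carrying the essential edges.

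The repair is exactly the board decomposition the paper uses: Maker answers Breaker on the sub-board Breaker just played on, runs the Bednarska--\L uczak random strategy \emph{only inside} each $E_{G_{n,p}}(V_i)$ (where the relevant density is right, giving a copy of $K_t$ on every vertex set of size $n^{1-\beta}$; Lemma~\ref{lem:many-copies_Maker}), and on the dense cross-pairs of $G_\alpha$ plays a discrepancy-game (Balancer) strategy to keep each pair $(V_i,V_j)$ regular in her graph (Lemma~\ref{lem:regularity_Maker}); the pieces are then glued with dependent random choice rather than by counting global templates. If you want to keep your global-template framework, you would at minimum have to replace uniform random play by a strategy that pairs off against Breaker's moves on $E(G_{n,p})$ separately, at which point you have essentially reconstructed the paper's decomposition.
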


For $t \ge 4$ this is optimal up to the constant $\gamma$ in the exponent, because with $p = o \left(n^{-2/(t+1)} \right)$ and $G_\alpha$ being an $r$-partite Tur\'{a}n graph we need a copy of $K_t$ on at least one of the partite sets, but a.a.s.~Breaker has a strategy to prevent Maker from having any copy of $K_t$ in $G_{n,p}$~\cite{NSS2016}.

To extend this to arbitrary graphs $H$ we introduce the following {\em $r$-partite $2$-density}, analogously to the $r$-partite density introduced by Krivelevich, Sudakov, and Tetali~\cite{KST_SmoothedAnalysis} for studying the appearance of $H$ in $G_\alpha \cup G_{n,p}$.
Let
\begin{align*}
	m_2^{(r)}(H):= \min_{V(H) = \cup_iP_i} \max_i m_2(H[P_i]),
\end{align*}
where we minimise over all possible partitions of the vertex set of $H$ into $r$ parts $P_1,\dots,P_r$.
Then the following holds.

\begin{thm}[Unbiased MB $H$-game]
\label{thm:H-game_Maker}
	Let $\gamma>0$, $r \ge 2$ be an integer, $\alpha \in ( \frac{r-2}{r-1},\frac{r-1}{r} ]$, and let $H$ be a fixed graph with $m_2^{(r)}(H)>0$.
	Further, let $G_\alpha$ be a graph on $n$ vertices with $\delta(G_{\alpha}) \geq \alpha n$, and let
	$p \ge n^{-1/m_2^{(r)}(H)+\gamma}$.
	Then a.a.s.~the following holds: 
	playing a $(1:1)$ Maker-Breaker game  on the edges 	
	of $G_{\alpha}\cup G_{n,p}$, Maker 
	has a strategy to obtain a copy of $H$.
\end{thm}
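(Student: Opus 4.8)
The plan is to follow the strategy underlying Theorem~\ref{thm:clique_Maker}, tuned to the partition that is optimal for $H$. Fix a partition $V(H)=P_1\cup\dots\cup P_r$ attaining the minimum in the definition of $m_2^{(r)}(H)$, so that $m_2(H[P_i])\le\mu:=m_2^{(r)}(H)$ for every $i$, and write $h:=v(H)$. Since $p\ge n^{-1/\mu+\gamma}$ we have $p\ge n^{-1/m_2(H[P_i])+\gamma}$ for every $i$, so on each part $p$ lies a fixed polynomial factor $n^{\gamma}$ above the threshold for the $H[P_i]$-game on a binomial random graph. As $\delta(G_\alpha)\ge\alpha n>\frac{r-2}{r-1}n$ exceeds the Tur\'an threshold for $K_r$, supersaturation yields $\Omega(n^r)$ copies of $K_r$ in $G_\alpha$, and from these a standard dependent-random-choice argument (as in the proof of Theorem~\ref{thm:clique_Maker}) produces pairwise disjoint sets $U_1,\dots,U_r\subseteq V(G_\alpha)$, each of size $\Omega(n)$, such that for all $i\ne j$ every $S\subseteq U_j$ with $|S|\le h^2$ has $\Omega(n)$ common neighbours in $U_i$ within $G_\alpha$. $\maker$ aims to build a copy of $H$ with each $P_i$ embedded into $U_i$, realising the edges of $H[P_i]$ by $G_{n,p}$-edges inside $U_i$ and the edges of $H$ joining two distinct parts by $G_\alpha$-edges.

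$\maker$ proceeds in two phases on the (essentially) disjoint boards $E(G_{n,p})$ and $E(G_\alpha)$. In \emph{Phase~1} she ignores $G_\alpha$ and plays the Bednarska--\L uczak random strategy on each $G_{n,p}[U_i]$, until she has claimed a constant fraction of the $\Theta(pn^2)$ edges of $\bigcup_i G_{n,p}[U_i]$; this takes only $o(n^2)$ rounds. Because $p$ is a polynomial factor above threshold, her resulting graph $M^{(i)}\subseteq G_{n,p}[U_i]$ is then a.a.s.\ pseudo-random enough that \emph{every} subset of $U_i$ of size at least $\eps_0|U_i|$ spans a copy of $H[P_i]$ inside $M^{(i)}$, for a suitable constant $\eps_0>0$. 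In \emph{Phase~2} she plays on $\bigcup_{i<j}E(G_\alpha[U_i,U_j])$; as Phase~1 cost only $o(n^2)$ rounds, $\breaker$ has removed only $o(n^2)$ of these edges, so the codegree property above still holds. By a box / minimum-degree type strategy -- essentially playing at random inside each of the $\binom{r}{2}$ dense bipartite graphs, thereby keeping a positive-density quasi-random subgraph -- $\maker$ claims a subgraph $M_\alpha$ such that for all $i\ne j$ every $S\subseteq U_j$ with $|S|\le h^2$ has at least $cn$ common $M_\alpha$-neighbours in $U_i$, for a constant $c>0$; we may line up the constants so that $cn\ge\eps_0|U_i|$. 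The order of the phases is essential: $E(G_\alpha)$ is far larger than $E(G_{n,p})$, so were $\maker$ to postpone her random moves then $\breaker$ could meanwhile claim all of $G_{n,p}$, whereas doing Phase~1 first costs $G_\alpha$ only negligibly many edges and makes the later fate of $G_{n,p}$ irrelevant.

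Once the game is over, $\maker$ reads off a copy of $H$ greedily over $i=1,\dots,r$. Suppose $v_w$ has been chosen for all $w\in P_1\cup\dots\cup P_{i-1}$, and set
\[
	W_i \;:=\; \bigcap\bigl\{\,N_{M_\alpha}(v_w)\cap U_i \;:\; w\in P_1\cup\dots\cup P_{i-1},\ N_H(w)\cap P_i\ne\emptyset\,\bigr\}.
\]
This is the common $M_\alpha$-neighbourhood in $U_i$ of at most $h^2$ already-placed vertices, so $|W_i|\ge cn\ge\eps_0|U_i|$, and hence by the Phase~1 property $W_i$ contains a copy $F_i$ of $H[P_i]$ all of whose edges $\maker$ owns; embed $P_i$ onto $V(F_i)$ by any isomorphism. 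Then every edge of $H$ inside a part was claimed by $\maker$ from $G_{n,p}$, and every edge $uw\in E(H)$ with $u\in P_i$, $w\in P_j$, $j<i$, satisfies $v_u\in V(F_i)\subseteq W_i\subseteq N_{M_\alpha}(v_w)$, so $v_uv_w\in M_\alpha$ is owned by $\maker$; since the $U_i$ are disjoint, so are the $F_i$, and together they form a copy of $H$ in $\maker$'s graph.

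The two inputs I have only sketched carry the real weight. The first is that $\maker$'s random strategy on $G_{n,p}[U_i]$ yields the strong property that \emph{every} linear-sized subset of $U_i$ spans a copy of $H[P_i]$ in her graph, robustly against a $\breaker$ of constant bias; this is where the polynomial slack $n^\gamma$ over the $H[P_i]$-threshold is spent, and it is the technical heart, in the spirit of~\cite{BL2000,NSS2016}. The second is that the minimum-degree/codegree game on the dense bipartite graphs $G_\alpha[U_i,U_j]$ lets $\maker$ retain a subgraph of linear codegree, which is routine. Finally, with $m_2^{(r)}(H)=0$ -- so $\chi(H)\le r$, the parts $P_i$ are independent, and Phase~1 becomes vacuous -- the same scheme yields Theorem~\ref{thm:hgame0}, and with $H=K_{tr}$ it yields Theorem~\ref{thm:clique_Maker}.
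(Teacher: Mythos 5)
Your overall architecture (split $V(H)$ by the optimal $r$-partition, realise $H[P_i]$ inside a random part, realise the crossing edges inside dense pairs of $G_\alpha$, then glue via common neighbourhoods) is the same as the paper's, but there is a genuine gap at the step where you set up the sets $U_1,\dots,U_r$. You claim that dependent random choice produces pairwise disjoint sets of size $\Omega(n)$ such that \emph{every} subset $S\subseteq U_j$ with $|S|\le h^2$ has $\Omega(n)$ common neighbours in $U_i$, and you then need $W_i\subseteq U_i$ with $|W_i|\ge \eps_0|U_i|$ so that the Phase~1 property (an $H[P_i]$-copy in every subset of size $\eps_0|U_i|$) applies. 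Dependent random choice does not deliver linear-sized sets with this universal codegree property: Lemma~\ref{lem:reg_drc} gives a set of size only $m^{1-\beta}$, and this polynomial loss is essentially unavoidable. This is precisely why the paper's Lemma~\ref{lem:many-copies_Maker} is stated for \emph{every vertex set of size $n^{1-\beta}$} rather than for linear-sized sets (this is where the slack $n^{\gamma}$ in $p$ is spent), and why the paper applies dependent random choice \emph{iteratively during the embedding} --- once per part, always refining the next part relative to the at most $\ell$ vertices already placed --- rather than once up front for all small subsets simultaneously. With your linear-sized $U_i$ replaced by the correct $n^{1-\beta}$-sized DRC output, your bookkeeping $cn\ge\eps_0|U_i|$ with $W_i\subseteq U_i$ breaks down, since $W_i$ would have to be a linear-sized subset of a sublinear set.

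A second, related gap is your treatment of the game on the bipartite boards $G_\alpha[U_i,U_j]$. A minimum-degree/box-game strategy controls degrees, not common neighbourhoods of arbitrary $h^2$-subsets; "playing at random to keep a positive-density quasi-random subgraph" is not a strategy with a proof attached. The paper resolves this by having Maker play a discrepancy (Balancer) game on each regular pair (Lemma~\ref{lem:regularity_Maker} via Theorem~\ref{thm:balancer}), which guarantees that the pairs remain $\eps$-regular with half the density \emph{in Maker's graph}; the slicing lemma and dependent random choice are then applied to Maker's graph at each stage of the embedding. Your phase-ordering remark (play the random board first since it has $o(n^2)$ edges) is a workable alternative to the paper's "play on the same board as Breaker" convention, but it does not substitute for maintaining regularity of Maker's bipartite subgraphs, which is what makes the iterated codegree extraction possible.
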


Note that $m_2^{(r)}(H)>0$ if and only if $H$ has chromatic number more than $r$ and, therefore, Theorem~\ref{thm:hgame0} covers the case $m_2^{(r)}(H)=0$ with $p=0$.
For many cases of $H$ Theorem~\ref{thm:H-game_Maker} again is optimal up to the constant $\gamma$ in the exponent.
But, similarly to the results for the $H$-game on $G_{n,p}$,
we do not have optimality when the relevant density
$m_2^{(r)}(H)$ is determined by a subgraph of $H$ isomorphic to $K_3$.
We will discuss more details in Section~\ref{sec:conc} together with some open problems.

\subsection{Our results - Waiter-Client games on randomly perturbed graphs}

Some of the approaches of our proofs for Maker-Breaker games can be modified to work for Waiter-Client games.
A $(1:b)$ Waiter-Client game (also referred to as $b$-{\em biased Waiter-Client game}) on some hypergraph $(X,\mathcal{F})$ is played as follows.
In every round,
Waiter chooses $b+1$ elements of $X$ that have not been chosen before (except for maybe the last round where Waiter could pick less elements), and she offers those to Client.
Client then claims one of these offered elements
(except for maybe the last round in the case when there is only one element left), while all the other elements go to Waiter.
The game is said to be Waiter's win if Client fully claims an element of $\mathcal{F}$; otherwise Client wins. 
This time, increasing the bias $b$ is never a disadvantage for Client, and hence there must be a 
{\em threshold bias} $b_{\mathcal{F}}$ such that
Waiter wins if and only if $b<b_{\mathcal{F}}$ (see e.g.~\cite{Tan2017}).
For the structures discussed above, we prove the following results.

\begin{thm}[Biased WC Hamiltonicity game]
	\label{thm:Ham_WC}
	For every real $\alpha>0$ there exist constants $c,C>0$ such that the following holds for large enough integers $n$. Let $G_{\alpha}$ be a graph on $n$ vertices with 
	$\delta(G_{\alpha})\geq \alpha n$, let 
	$b\leq cn$ be an integer, and let $p \geq \frac{Cb}{n}$. Then a.a.s.~the following holds: playing a $(1:b)$ Waiter-Client game on the edges of $G_{\alpha}\cup G_{n,p}$, Waiter has a strategy to force Client to occupy a Hamilton cycle.
\end{thm}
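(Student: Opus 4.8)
The plan is to treat all $\alpha>0$ uniformly, since the random graph is available in any case ($p\ge Cb/n$ regardless of $\alpha$), and to proceed in three stages: a preparatory stage recording the expansion of $\Gamma:=G_\alpha\cup G_{n,p}$, an \emph{expansion stage} in which Waiter forces Client to claim a connected expanding spanning subgraph, and a \emph{booster stage} in which Waiter forces Client to complete it to a Hamilton cycle.

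\medskip

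\emph{Stage 0 (properties of $\Gamma$).} Fix a large constant $C=C(\alpha)$. From $\delta(G_\alpha)\ge\alpha n$ alone one gets that every $S$ with $|S|\le\alpha n/3$ satisfies $|N_{G_\alpha}(S)\setminus S|\ge 2|S|$, since each vertex of $S$ has at least $\alpha n$ neighbours, all lying in $S\cup N(S)$. For $\alpha n/3\le|S|\le n/4$ a routine first-moment and union-bound estimate shows that, once $C$ is large enough, a.a.s.\ the random edges alone already give $|N_{G_{n,p}}(S)\setminus S|\ge 2|S|$. Hence a.a.s.\ $\Gamma$ is connected and an $(n/4)$-expander with expansion factor $2$ (indeed with room to spare, factor $4$), and I condition on this event.

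\medskip

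\emph{Stage 1 (expansion stage).} As a warm-up, for each vertex $v$ Waiter offers $b+1$ still-unclaimed edges at $v$ — possible because $b+1\le\alpha n\le\deg_\Gamma(v)$ when $c<\alpha$ — which forces $v$ to receive a Client-edge. More generally Waiter runs the expander-building strategy, the Waiter--Client analogue of the one behind Theorem~\ref{thm:Ham}: whenever Client's current graph still has a ``deficient'' vertex or small set $S$, Waiter offers $b+1$ edges of $\Gamma$ each of which repairs the relevant connectivity or expansion deficiency, so that Client, picking any one, makes progress. This is exactly where the Waiter--Client game beats Maker--Breaker: Waiter, not the adversary, chooses the offered edges, so a linear bias $b\le cn$ suffices (rather than $cn/\log n$). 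The output is a Client subgraph $F$ that is a connected $(n/4)$-expander with factor $2$, while Waiter has deliberately left a $(1-o(1))$-fraction of $E(\Gamma)$ unclaimed for the next stage.

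\medskip

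\emph{Stage 2 (booster stage) and the main obstacle.} While Client's graph is not yet Hamiltonian it is a connected $(n/4)$-expander, so by P\'osa's booster lemma it has at least $\tfrac12(n/4+1)^2=\Omega(n^2)$ booster pairs. Waiter offers $b+1$ \emph{available} boosters — unclaimed $\Gamma$-edges that are boosters for Client's current graph — Client takes one, lengthening a longest path or closing a Hamilton cycle, and after at most $n$ rounds Client owns a Hamilton cycle. The delicate point, which I expect to be the main obstacle, is to guarantee at least $b+1$ available boosters at every such round: $\Gamma$ carries only $\Theta(bn)$ ``random'' edges, so most of the $\Omega(n^2)$ booster pairs are non-edges of $\Gamma$ and useless. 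I would resolve this by a dichotomy on $b$. If $b=\omega(\log n)$, then $n^2p=\Theta(bn)=\omega(n\log n)$, and a union bound over the $2^{O(n\log n)}$ connected $(n/4)$-expanders shows that a.a.s.\ each of them has $\Omega(bn)$ booster pairs inside $G_{n,p}$; reserving almost all of $E(G_{n,p})$ through Stage 1 then leaves more than $n(b+1)$ available boosters throughout, which is enough. If $b=O(\log n)$ (in particular $b$ constant) the union bound is too weak, but the bias is correspondingly tiny; there I would strengthen Stage 1 to force a Client graph carrying, near every vertex, a ``rotation gadget'' (mirroring the resilience arguments for $G_\alpha\cup G_{n,p}$), so that the Hamilton cycle is closed using a bounded number of specific $\Gamma$-edges whose availability Waiter secures directly, via P\'osa rotations rather than generic boosters. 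The cases $b=1$ and $\alpha\ge\tfrac12$ need nothing extra; for $\alpha\ge\tfrac12$ one could alternatively invoke a Waiter--Client Dirac-type statement, but the argument above already applies.
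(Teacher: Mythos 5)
Your overall architecture for the regime $b=\omega(\log n)$ matches the paper's Case~B (force an expander on the dense edges, force connectivity between linear-sized sets on the random edges, then feed Client boosters, with availability of boosters secured by a union bound over all connected expanders with $O(n)$ edges against the $e^{-\Omega(n^2p)}$ failure probability, which works precisely because $n^2p=\Omega(bn)=\omega(n\log n)$). However, there are two genuine gaps.

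First, Stage~1 is not a strategy. Saying that ``whenever Client's graph has a deficient set $S$, Waiter offers $b+1$ edges each of which repairs the deficiency'' only guarantees one unit of progress per round on one of exponentially many candidate sets $S$, and a single set of size $s$ may need $2s$ repairs; you give no argument that this process terminates, let alone within the $O(n)$ rounds you need in order to reserve almost all of $E(G_{n,p})$ for boosters. This is exactly the hard part of the proof. The paper resolves it by combining Theorem~\ref{thm:WC_transversal} (the Waiter--Client transversal criterion, which handles \emph{all} sets of size exactly $\eps n$ simultaneously in $O(n/\eps^2)$ rounds) with a separate finishing step: it identifies the inclusion-minimal sets $A$ that still violate $|N_{\client}(A)|\ge 9|A|$, shows via the transversal that these sets cover at most $2\eps n$ vertices in total, and then forces each such vertex to acquire $9$ \emph{pairwise disjoint} new neighbours in a reserved part, which yields $|N_\client(A)|\ge 9|A|$ for every small $A$ at once (property~(P)). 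Some device of this kind --- a global potential/transversal argument plus disjoint-neighbourhood bookkeeping --- is indispensable and absent from your write-up.

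Second, the small-bias regime is not handled. For $b=O(\log n)$ you correctly observe that the union bound over expanders fails, but the proposed fix (``rotation gadgets near every vertex, secured via P\'osa rotations'') is only a slogan; no construction, no count of how many specific edges Waiter must protect, and no argument that $G_{n,p}$ with $p=\Theta(b/n)$ supplies them. The paper avoids boosters entirely in this regime: for $b\le n^{0.49}$ it forces Client's graph to satisfy the deterministic Hamiltonicity criterion of Theorem~\ref{thm:Ham2}, namely polynomial expansion $|N(S)|\ge n^{\delta}|S|$ for all $|S|\le \delta n^{1-\delta}$ (achieved on the edges of $G_\alpha$ via a degree game and a transversal game on a $4$-partition) together with an edge of Client between every pair of disjoint sets of size $\beta n$ (achieved on $E(G_{n,p})$, where $|E_{G_{n,p}}(A,B)|\ge \tfrac12 p\beta^2n^2=\Omega(Cbn)$ makes the transversal sum $\sum_F 2^{-|F|/(2b-1)+1}$ vanish). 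Without either this route or a worked-out rotation argument, your proof does not cover constant or logarithmic $b$, which is the bulk of the stated range.
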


\begin{thm}[Biased WC $k$-vertex-connectivity game]
	\label{thm:vertex-connectivity_WC}
	For every real $\alpha>0$ and every integer $k$ there exist constants $C,c>0$ such that the following holds for large enough integers $n$. Let $G_{\alpha}$ be a graph on $n$ vertices with $\delta(G_{\alpha})\geq \alpha n$, let $b\leq cn$ be an integer, and let $p\geq \frac{Cb}{n^2}$. Then with probability at least $1-\exp(-c pn^2)$ the following holds: 
	playing a $(1:b)$ Waiter-Client game on the edges of $G_{\alpha}\cup G_{n,p}$, Waiter has a strategy to force Client to claim a spanning $k$-vertex-connected graph.
\end{thm}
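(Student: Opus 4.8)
Our plan is to mimic, in the Waiter--Client world, the proof of Theorem~\ref{thm:vertex-connectivity}: we split the board into disjoint pieces, let $\waiter$ run an independent sub-game on each, and at the end check that $\client$'s graph $C$ is $k$-vertex-connected. Throughout we use the standard criterion that a graph $C$ on $n\ge k+1$ vertices is $k$-vertex-connected as soon as $|N_C(U)\setminus U|\ge k$ for every $U\subseteq V(C)$ with $1\le|U|\le n/2$; indeed, if $C-S$ were disconnected for some $|S|\le k-1$, then $U:=$ (the union of all components of $C-S$ except a largest one) satisfies $1\le|U|\le n/2$ and $N_C(U)\setminus U\subseteq S$.

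Let $V_1,\dots,V_\ell$ be the vertex classes of the components of $\galpha$. As $\delta(\galpha)\ge\alpha n$ we have $|V_i|>\alpha n$, hence $\ell\le 1/\alpha$, and $\galpha$ has no edge between distinct $V_i$. Therefore $E(\galpha\cup\gnp)$ is the disjoint union of the sets $E(\galpha[V_i])$ ($1\le i\le\ell$), the sets $M_{ij}$ of $\gnp$-edges between $V_i$ and $V_j$ ($1\le i<j\le\ell$), and the $\gnp$-edges inside the $V_i$ (which play no role and are handed to $\client$ arbitrarily at the end). On $E(\galpha[V_i])$, $\waiter$ forces $\client$ into a connected spanning subgraph $C_i\subseteq\galpha[V_i]$ that is a sufficiently good expander inside $V_i$, meaning that every $U'\subseteq V_i$ with $1\le|U'|\le|V_i|/2$ and $|N_{\galpha[V_i]}(U')\setminus U'|\ge k$ satisfies $|N_{C_i}(U')\setminus U'|\ge k$. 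This is the Waiter--Client counterpart of the expander $\maker$ builds inside a component in the proof of Theorem~\ref{thm:vertex-connectivity}; because $\delta(\galpha)\ge\alpha n$ it can be achieved with a bias $b$ linear in $n$, the relevant Waiter--Client box and expander sub-games being far easier than their Maker--Breaker analogues (which tolerate only $b=O(n/\log n)$). On each $M_{ij}$, $\waiter$ forces $\client$ to claim a matching $N_{ij}$ of size $s$, where $s=s(\alpha,k)$ is a large constant (a suitable multiple of $k/\alpha^2$). Here we use that $\Exp|M_{ij}|\ge|V_i||V_j|\,p\ge\alpha^2pn^2$: a Chernoff bound and a union bound over the at most $\binom{\ell}{2}$ pairs show that, with probability at least $1-\exp(-cpn^2)$ (for $c$ small in terms of $\alpha$), every $M_{ij}$ has at least $\tfrac12\alpha^2pn^2\ge\tfrac12\alpha^2Cb$ edges and, being a sparse random bipartite graph on parts of size $>\alpha n$, contains a matching of size $\ge 2s(b+1)$ once $C$ is large in terms of $\alpha$ and $k$; on such a sub-board $\waiter$ forces $N_{ij}$ by repeatedly offering $b+1$ pairwise disjoint $M_{ij}$-edges that avoid $\client$'s current partial matching.

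It remains to check that $C:=\bigcup_i C_i\cup\bigcup_{i<j}N_{ij}$ is $k$-vertex-connected. Fix $U$ with $1\le|U|\le n/2$ and put $U_i=U\cap V_i$. If some component is \emph{genuinely split} --- $U_i\notin\{\emptyset,V_i\}$ and the external $\galpha[V_i]$-neighbourhood of $U_i$ has size $\ge k$ --- then the expander $C_i$ already produces $\ge k$ $C$-neighbours of $U$ inside $V_i\setminus U$. Otherwise $U$ is \emph{aligned}: within each $V_i$ either $U_i\in\{\emptyset,V_i\}$, or $U_i$ is one side of a $\galpha[V_i]$-separator of size $\le k-1$, in which case $\delta(\galpha)\ge\alpha n$ forces both $|U_i|$ and $|V_i\setminus U_i|$ to be at least $\alpha n-O(k)$, hence of density $\ge\alpha-o(1)$ in $V_i$. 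Choose $i$ with $U_i\ne\emptyset$ and $j$ with $V_j\setminus U\ne\emptyset$ (both exist, as $\emptyset\ne U\ne V$). Then $U_i$ has density $\ge\alpha-o(1)$ in $V_i$, and $V_j\setminus U$ (which equals $V_j$, or the non-$U_j$ side of a small separator) has density $\ge\alpha-o(1)$ in $V_j$; since $N_{ij}$ is a large matching whose endpoints are drawn from $\gnp$, it has many --- certainly $\ge k$ --- edges running from $U_i$ to $V_j\setminus U$, providing $\ge k$ distinct $C$-neighbours of $U$ outside $U$.

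The main obstacle is making this last step work for \emph{all} $b\le cn$ at once. For $b$ large the matchings $N_{ij}$ are long, and a single concentration estimate --- a union bound over the aligned sets $U$ --- suffices. For $b$ small only a bounded number of cross-edges per pair is available and the union bound is too weak; instead one uses the structural fact that, since $\delta(\galpha)\ge\alpha n$, each $\galpha[V_i]$ has only $O_{\alpha,k}(1)$ separators of size $\le k-1$ with both sides of linear order (two such separators cannot cross, as that would force $|V_i|\ge 4\alpha n-O(k)$), so there are only $O_{\alpha,k}(1)$ aligned sets $U$ in total and a bounded number of $\gnp$-edges between each pair of components repairs all of them. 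Choosing $c$ small and $C$ large, both in terms of $\alpha$ and $k$, then completes the proof.
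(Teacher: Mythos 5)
Your decomposition is based on the \emph{connected components} of $G_\alpha$, and this is where the argument breaks. A graph with $\delta(G_\alpha)\ge\alpha n$ is very often connected (so $\ell=1$), yet it can still contain vertex separators of size $<k$ — even bridges. Take $G_\alpha$ to be two cliques $A,B$ of size $n/2$ joined by a single edge: it is connected with minimum degree $n/2-1$, so in your scheme all of $G_{n,p}$ lies ``inside $V_1$'' and is discarded, and there are no cross-matchings $N_{ij}$ at all. Waiter is then confined to $E(G_\alpha)$, where she cannot force Client to claim the unique bridge (she must offer it alongside $b\ge 1$ other edges and Client simply declines it), so Client's final graph is disconnected. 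In your own terminology, $U=A$ is an ``aligned'' set (its external $G_\alpha$-neighbourhood has size $1<k$), and your repair mechanism for aligned sets — the matchings $N_{ij}$ between distinct components — does not exist here. The random edges you threw away are exactly the ones needed to repair such internal small separators. The correct decomposition is not into components but into $\Theta(n)$-vertex-connected pieces (Lemma~\ref{lem:partition} of the paper), which may lie inside a single component; the $G_{n,p}$ edges must then be used to join these pieces, including pieces in the same component. A secondary weak point: even where your matchings $N_{ij}$ do exist, a matching of constant size $s$ chosen during play has no reason to place $\ge k$ of its edges between a \emph{specific} linear-sized side $U_i$ of a separator and $V_j\setminus U$; your sketch of a fix via the bounded number of aligned sets would require Waiter to target prescribed edge sets between prescribed separator sides, which is essentially a different (and unproven) claim about $G_{n,p}$ supplying $\Omega(b)$ edges between each such pair.

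For comparison, the paper avoids the neighbourhood-expansion criterion altogether: it first partitions $V(G_\alpha)$ into $k$ classes $U_1,\dots,U_k$ so that every vertex has $\Omega(n)$ $G_\alpha$-neighbours in each class (Lemma~\ref{lem:vertex-partition}), then partitions each class into $\Theta(n)$-connected pieces, uses the $G_\alpha$-edges inside the pieces to force an expander and connectivity on each piece, uses the $G_{n,p}$-edges to merge the pieces of each class into one connected component of Client's graph, and uses the $G_\alpha$-edges between classes to give every vertex a Client-neighbour in every class. Removing any $k-1$ vertices then leaves some class $U_i$ intact, connected, and dominating, which yields $k$-connectivity without any case analysis over separators.
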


\begin{thm}[Unbiased WC $H$-game]\label{thm:H-game_WC}
	For any integer $r \ge 2$ let $\alpha \in ( \frac{r-2}{r-1},\frac{r-1}{r} ]$, and let $H$ be a fixed graph. Further, let $G_\alpha$ be a graph on $n$ vertices with $\delta(G_{\alpha}) \geq \alpha n$, and let
	$p = \omega (n^{-1/m^{(r)}(H)})$.
	Then a.a.s.~the following holds:
	playing a $(1:1)$ Waiter-Client game on $G_\alpha \cup G_{n,p}$, Waiter has a strategy to force Client to claim a copy of $H$.
\end{thm}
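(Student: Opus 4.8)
The plan is to reduce the Waiter-Client $H$-game on $G_\alpha \cup G_{n,p}$ to a one-sided embedding argument in which Waiter, using a box/pairing-type strategy, forces Client to take a small carefully chosen set of edges that completes a copy of $H$. Fix an optimal balanced partition $V(H) = P_1 \cup \dots \cup P_r$ witnessing $m^{(r)}(H)$, and recall that $m^{(r)}(H) > 0$ means $\chi(H) > r$, so at least one part $P_i$ spans an edge; more precisely each part $P_i$ contains a subgraph $H_i = H[P_i]$ with $m(H_i) \le m^{(r)}(H)$. Since $\delta(G_\alpha) \ge \alpha n$ with $\alpha \in (\frac{r-2}{r-1}, \frac{r-1}{r}]$, a standard Turán-type argument (Erd\H{o}s--Stone, or a direct greedy/Ramsey argument as in Theorem~\ref{thm:hgame0}) shows $G_\alpha$ contains a complete $r$-partite subgraph $K_{r \times s}$ on $rs$ vertices for any fixed $s$; in particular all the "cross" edges of a copy of $H$ (edges between distinct parts) are already present in $G_\alpha$, and Waiter can hand those to Client for free at the very start (offering them in pairs or alongside junk edges, since Client can refuse at most one per round). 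So the whole game reduces to: within a linear-sized vertex set, can Waiter force Client to occupy, for each $i$, a copy of $H_i$ using only $G_{n,p}$-edges inside the corresponding part?

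This is exactly the Waiter-Client $H_i$-game on $G_{n,p}$ restricted to a linear-sized vertex set, and here I would invoke the known Waiter-Client threshold machinery. The key point is that $p = \omega(n^{-1/m^{(r)}(H)}) = \omega(n^{-1/m(H_i)})$ (possibly after discarding subgraphs of $H$ that are trees or otherwise below the critical density, which Client can be forced into trivially), so that inside each part the random graph is well above the appearance threshold $n^{-1/m(H_i)}$ for $H_i$ in $G_{n,p}$. I would then use a box-game / potential-function argument in the spirit of Bednarska--\L uczak adapted to Waiter-Client: Waiter repeatedly offers Client pairs of edges forming, together with Client's already-claimed edges, two edge-disjoint "extensions" toward a copy of $H_i$; whichever Client declines goes to Waiter but the other makes progress, and since $G_{n,p}$ with $p = \omega(n^{-1/m(H_i)})$ a.a.s.\ contains $\omega(1)$ many edge-disjoint such extension structures at every stage (by a first-moment/expansion count of partial copies of $H_i$), Waiter never runs out of valid offers and completes a Client copy of $H_i$ in a bounded number of rounds.

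The main obstacle, and the place where the argument needs genuine care rather than a black-box citation, is that the Waiter-Client $H$-game threshold on $G_{n,p}$ is \emph{not} part of the literature quoted in the excerpt, so this reduction must actually prove it (at least for the relevant $H_i$). The delicate part is the robustness of the "many edge-disjoint extensions survive Client's choices" count: after Client and Waiter have claimed $O(1)$ edges this is easy, but we need it to hold throughout, which requires showing that a typical near-copy of $H_i$ in $G_{n,p}$ has many vertex-or-edge-disjoint ways of being completed — a local expansion property that follows from $p = \omega(n^{-1/m(H_i)})$ and the definition of $m(H_i)$ as a maximum density (so no subgraph is a bottleneck), via a routine but slightly technical second-moment or alteration computation. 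I expect the proof to be organized as: (i) extract $K_{r\times s}$ from $G_\alpha$ and hand cross-edges to Client; (ii) state and prove a lemma that for fixed $F$ with $m(F) \le d$ and $p = \omega(n^{-1/d})$, Waiter wins the WC $F$-game on $G_{n,p}[W]$ for any linear $W$, a.a.s.; (iii) apply it to each $H_i$ in parallel (the parts are disjoint, so the sub-games do not interfere) and glue.
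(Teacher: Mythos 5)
There is a genuine gap, and it appears already in your step (i). In a $(1:1)$ Waiter-Client game Waiter offers two unclaimed edges, Client keeps exactly \emph{one} of them, and the other goes to \emph{Waiter}. So Waiter cannot ``hand'' any specific edge to Client: whatever pair you offer, Client may decline the edge you care about, and it is then permanently lost to Waiter's graph. In particular Client can refuse every cross-edge of your intended $K_{r\times s}$, so the reduction to independent $H_i$-games inside the parts never gets off the ground; the same objection applies to the gluing in step (iii), which again needs Client to hold prescribed cross-edges. Your step (ii) is also left unproven and is aimed with the wrong tools: Bednarska--\L uczak-type potential arguments are designed for Maker-Breaker games at the $2$-density threshold $n^{-1/m_2}$, whereas the theorem claims the much lower \emph{appearance} threshold $n^{-1/m^{(r)}(H)}$, so any argument that treats the inside-part games as genuine positional-game-threshold statements on $G_{n,p}$ is fighting the wrong battle.

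The missing idea is redundancy rather than targeting: the only way Waiter can force Client onto specific structures is to have many disjoint candidates and discard half of them each round. Concretely (this is the paper's proof), let $F$ be the vertex-disjoint union of $2^{e(H)}$ copies of $H$; then $m^{(r)}(F)=m^{(r)}(H)$, so by the containment result of Krivelevich, Sudakov, and Tetali, $G_\alpha\cup G_{n,p}$ a.a.s.~contains a copy of $F$ already at $p=\omega(n^{-1/m^{(r)}(H)})$. Waiter then plays only on $F$ and inducts on $e(H)$: fix an edge $e\in E(H)$, offer its $2^{e(H)}$ copies in pairs so that Client claims $2^{e(H)-1}$ of them, keep the $2^{e(H)-1}$ copies of $H$ in which Client now owns $e$ and Waiter owns nothing, and recurse on $H-e$. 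This needs no regularity, no dependent random choice, and no Waiter-Client threshold on $G_{n,p}$; the entire probabilistic content is the appearance of one constant-size subgraph. Your instinct that ``many edge-disjoint extensions'' are the key is pointing in the right direction, but the correct quantitative form is ``constantly many vertex-disjoint full copies of $H$,'' which you get for free from the appearance threshold of the disjoint union, not from a second-moment count of partial copies.
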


Regarding the edge probability $p$,
note that the results for the Hamiltonicity game
and the $k$-vertex-connectivity game coincide
with the theorems from Maker-Breaker games;
the optimality can be explained analogously.
However, in contrast to Maker-Breaker games,
both Theorem~\ref{thm:Ham_WC} and Theorem~\ref{thm:vertex-connectivity_WC}
allow the bias to be linear in $n$.
For the corresponding games on $K_n$ this was already 
shown in~\cite{BHKL2016}. Although we use tools 
for the proof that are similar to the 
discussion of the corresponding Maker-Breaker
games, the difference regarding the bias $b$
requires some new ideas.
Moreover, for the $H$-game the threshold on $p$ differs from
the discussion of Maker-Breaker games. The bound 
in Theorem~\ref{thm:H-game_WC} is
optimal, since for $p = o (n^{-1/m^{(r)}(H)})$ it is known that a.a.s.~$G_\alpha \cup G_{n,p}$ does not contain a copy of $H$~\cite{KST_SmoothedAnalysis}.

\medskip

\subsection*{Organization of the paper.} 
In Section~\ref{sec:prel} we will summarise some
useful tools on probability, regularity and games.
In Section~\ref{sec:ham} we will consider the Hamiltonicity game and prove 
Theorem~\ref{thm:Ham}
and Theorem~\ref{thm:Ham_WC}. Section~\ref{sec:conn}
is devoted to the $k$-vertex-connectivity game, 
where we show Theorem~\ref{thm:vertex-connectivity}
and Theorem~\ref{thm:vertex-connectivity_WC}.
In Section~\ref{sec:hgame} we continue with the $H$-game.
We first start with the discussion of cycle games.
Then we use Subsection~\ref{subsec:maintain-regularity}
and Subsection~\ref{subsec:many-H} to prepare our strategy
for the general $H$-game.
Theorem~\ref{thm:H-game_Maker}
and Theorem~\ref{thm:H-game_WC} are proven afterwards in
Subsection~\ref{subsec:H-game-strategy}.
Note that Theorem~\ref{thm:clique_Maker} is a corollary
of Theorem~\ref{thm:H-game_Maker},
and Theorem~\ref{thm:hgame0} follows analogously to the proof of Theorem~\ref{thm:H-game_Maker} (see Remark~\ref{rem:r-chromatic}).
We will finish with a few concluding remarks and open problems in Section~\ref{sec:conc}.

\subsection*{Notation.} 

We use standard graph-theoretic notation, which closely follows~\cite{W2001}. 
In most of the proofs we will first describe Maker's or Waiter's strategy and afterwards discuss why it is possible for the respective player to follow this strategy.
We implicitly assume that Maker or Waiter forfeits the game when it is not possible to follow her strategy, while it will follow from the discussion that this does not happen.
Additionally, when some Maker-Breaker game is in progress, we emphasise the following. 
We let $\maker$ and $\breaker$ denote the graphs consisting of Maker's edges and Breaker's edges, respectively.
Any edge belonging to $\maker \cup \breaker$ is said to be claimed, while all the other edges in play are called free.
Analogously, for a Waiter-Client game let $\waiter$ and $\client$ denote Waiter's and Client's graphs, respectively.
  
 \bigskip


\section{Preliminaries}
\label{sec:prel}
  
\subsection{Probabilistic tools}

We will extensively use a plethora of tools which come from probability theory or which can be proven by some probabilistic argument. The first tool, which we often use to show that some non-negative random variable very unlikely exceeds a given bound, is Markov's inequality:

\begin{lemma}[Markov's inequality, see e.g.~\cite{JLR2000}]\label{lem:markov}
Let $X\geq 0$ be a random variable. For every $t\geq 0$ it holds that
\[
\Prob\left( X\geq t \right)
\leq \frac{\Exp(X)}{t}~ .
\]
\end{lemma}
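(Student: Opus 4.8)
The plan is to use the standard indicator-random-variable argument, which has the advantage of treating all underlying probability spaces (discrete, absolutely continuous, or mixed) uniformly. First I would dispose of the two degenerate situations. If $t=0$, the asserted bound reads $\Prob(X\ge 0)\le \Exp(X)/0$, which we interpret as vacuous (the right-hand side being $+\infty$), so it suffices to treat $t>0$. If $\Exp(X)=+\infty$, the inequality holds trivially, so we may in addition assume $\Exp(X)<\infty$.

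For the main case, the crucial step is the pointwise inequality
\[
t\cdot \mathds{1}_{\{X\ge t\}} \;\le\; X\cdot \mathds{1}_{\{X\ge t\}} \;\le\; X .
\]
The first inequality holds because on the event $\{X\ge t\}$ both indicators equal $1$ and $X\ge t$ by definition of the event, while off that event the left-hand side is $0$ and $X\ge 0$; the second uses $X\ge 0$ together with $\mathds{1}_{\{X\ge t\}}\le 1$. Applying the expectation operator now finishes the argument: by monotonicity it preserves both inequalities, and by linearity together with $\Exp(\mathds{1}_{\{X\ge t\}})=\Prob(X\ge t)$ we obtain $t\cdot\Prob(X\ge t)\le \Exp(X)$. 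Dividing through by $t>0$ gives the claimed bound.

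I do not anticipate any genuine obstacle here; the only points deserving a sentence of care are the two degenerate cases above ($t=0$ and $\Exp(X)=\infty$) and the appeal to monotonicity and linearity of expectation, which are just the basic properties of the integral. As an alternative one could argue directly in the discrete case via $\Exp(X)=\sum_x x\,\Prob(X=x)\ge \sum_{x\ge t} x\,\Prob(X=x)\ge t\sum_{x\ge t}\Prob(X=x)=t\,\Prob(X\ge t)$, and analogously with an integral in the absolutely continuous case, but the indicator argument above is cleaner and fully general.
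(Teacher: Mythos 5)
Your argument is correct and complete; it is the standard indicator-function proof of Markov's inequality, with the degenerate cases $t=0$ and $\Exp(X)=\infty$ handled appropriately. The paper itself gives no proof of this lemma — it is stated as a standard fact with a citation to~\cite{JLR2000} — so there is nothing to compare against beyond noting that your write-up matches the canonical textbook argument.
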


We also often use Chernoff's inequalities (see e.g.~\cite{JLR2000}) to show that a binomial random variable $X \sim \operatorname{Bin}(n,p)$ is concentrated around its expectation $\mathbb{E}(X)=np$, where $n$ is the number of independent rounds and $p$ is the success probability.

\begin{lemma}\label{lem:Chernoff1}
If $X \sim \operatorname{Bin}(n,p)$, then
\begin{itemize}
    \item $\Prob(X<(1-\delta)np)< \exp\left(-\frac{\delta^2np}{2}\right)$ for every $\delta>0$, and
    \item $\Prob(X>(1+\delta)np)< \exp\left(-\frac{np}{3}\right)$ for every $\delta\geq 1$.
\end{itemize}
\end{lemma}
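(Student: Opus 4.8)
The plan is to prove both bounds by the standard exponential--moment (Chernoff) method: apply Markov's inequality (Lemma~\ref{lem:markov}) to $e^{\lambda X}$, respectively $e^{-\lambda X}$, for a suitably chosen $\lambda>0$. First I would write $X=\sum_{i=1}^n X_i$ with $X_1,\dots,X_n$ independent Bernoulli random variables of success probability $p$ (we may assume $0<p<1$, the boundary cases being trivial). Then $\Exp(e^{\lambda X_i})=1-p+pe^{\lambda}$, and independence gives $\Exp(e^{\lambda X})=(1-p+pe^{\lambda})^n$. Using the elementary bound $1+x\le e^x$ with $x=p(e^{\lambda}-1)$, this is at most $\exp\bigl(np(e^{\lambda}-1)\bigr)$, and analogously $\Exp(e^{-\lambda X})\le \exp\bigl(np(e^{-\lambda}-1)\bigr)$.

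For the lower tail we may assume $0<\delta<1$, since otherwise $(1-\delta)np\le 0$ and the probability in question is $0$. For any $\lambda>0$, Markov's inequality applied to $e^{-\lambda X}$ yields
\[
\Prob\bigl(X<(1-\delta)np\bigr)=\Prob\bigl(e^{-\lambda X}>e^{-\lambda(1-\delta)np}\bigr)\le e^{\lambda(1-\delta)np}\,\Exp(e^{-\lambda X})\le \exp\Bigl(np\bigl[\lambda(1-\delta)-1+e^{-\lambda}\bigr]\Bigr).
\]
Choosing $\lambda=-\log(1-\delta)>0$ makes the bracket equal to $-\delta-(1-\delta)\log(1-\delta)$, so it remains to verify the scalar inequality $-\delta-(1-\delta)\log(1-\delta)\le -\delta^2/2$ on $(0,1)$; this follows because $f(\delta):=(1-\delta)\log(1-\delta)+\delta-\delta^2/2$ satisfies $f(0)=f'(0)=0$ and $f''(\delta)=\delta/(1-\delta)\ge 0$, hence $f\ge 0$ on $[0,1)$.

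For the upper tail, for any $\lambda>0$ Markov's inequality applied to $e^{\lambda X}$ gives
\[
\Prob\bigl(X>(1+\delta)np\bigr)\le e^{-\lambda(1+\delta)np}\,\Exp(e^{\lambda X})\le \exp\Bigl(np\bigl[e^{\lambda}-1-\lambda(1+\delta)\bigr]\Bigr),
\]
and the choice $\lambda=\log(1+\delta)>0$ turns the bracket into $\delta-(1+\delta)\log(1+\delta)$. It then remains to check that $\delta-(1+\delta)\log(1+\delta)\le -\tfrac13$ for all $\delta\ge 1$: the function $g(\delta):=\delta-(1+\delta)\log(1+\delta)+\tfrac13$ has $g'(\delta)=-\log(1+\delta)<0$ and $g(1)=\tfrac43-2\log 2<0$, so $g(\delta)\le g(1)<0$ on $[1,\infty)$. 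The only non-mechanical part of the argument is verifying these two one-variable inequalities at the end; the rest is the routine Chernoff computation. (Alternatively, as the paper does, one may simply invoke~\cite{JLR2000}, these being textbook estimates.)
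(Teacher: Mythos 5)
Your proof is correct: the exponential-moment computation, the choices $\lambda=-\log(1-\delta)$ and $\lambda=\log(1+\delta)$, and the two scalar inequalities $-\delta-(1-\delta)\log(1-\delta)\le-\delta^2/2$ and $\delta-(1+\delta)\log(1+\delta)\le-\tfrac13$ (for $\delta\ge1$) all check out. The paper gives no proof of this lemma, citing~\cite{JLR2000} instead, and your argument is precisely the standard Chernoff-method derivation found there, so there is nothing to compare beyond noting that you have filled in the omitted textbook proof.
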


\begin{lemma}\label{lem:Chernoff2}
If $X \sim \operatorname{Bin}(n,p)$ and $k\geq 7 \Exp(X)$, then
\[\Prob(X\geq k)\leq \exp\left(-k\right)\, .\]
\end{lemma}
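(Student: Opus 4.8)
The plan is to run the standard exponential-moment (Chernoff) argument and then check that the slack supplied by the hypothesis $k \ge 7\Exp(X)$ is enough to absorb the resulting constants. Write $\mu := \Exp(X) = np$; we may assume $p>0$ (if $p=0$ then $X\equiv 0$ and the statement is trivial) and $k\ge 1$, so that $\mu>0$. Writing $X=\sum_{i=1}^n Y_i$ with $Y_i$ independent Bernoulli$(p)$, for every $t\ge 0$ Markov's inequality (Lemma~\ref{lem:markov}) applied to the nonnegative random variable $e^{tX}$ gives
\[
\Prob(X\ge k)=\Prob\!\left(e^{tX}\ge e^{tk}\right)\le e^{-tk}\,\Exp\!\left(e^{tX}\right),
\]
and by independence together with $1+x\le e^x$,
\[
\Exp\!\left(e^{tX}\right)=\left(1-p+pe^t\right)^n=\left(1+p(e^t-1)\right)^n\le \exp\!\left(\mu(e^t-1)\right).
\]
Hence $\Prob(X\ge k)\le \exp\!\left(-tk+\mu(e^t-1)\right)$ for every $t\ge 0$.

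Next I would optimise the exponent over $t$. Taking $t:=\log(k/\mu)$, which is legitimate since $k\ge 7\mu>\mu$ forces $t>0$, we get $e^t=k/\mu$ and therefore
\[
\Prob(X\ge k)\le \exp\!\left(-k\log(k/\mu)+k-\mu\right).
\]
It now suffices to show that the exponent is at most $-k$, i.e.\ that $\log(k/\mu)\ge 2-\mu/k$.

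Finally I would reduce this to an elementary inequality: setting $c:=k/\mu\ge 7$, the required bound becomes $\log c\ge 2-1/c$. The function $\phi(c):=\log c-2+1/c$ has derivative $\phi'(c)=1/c-1/c^2=(c-1)/c^2>0$ for $c>1$, and $\phi(7)=\log 7-2+\tfrac17>0$ because $\log 7>\tfrac{13}{7}$. Thus $\phi(c)\ge \phi(7)>0$ for all $c\ge 7$, which finishes the argument. The only delicate point is this last numerical verification, and it goes through with room to spare: at the extremal value $\mu=k/7$ the bound above actually yields $\Prob(X\ge k)\le \exp\!\big(k(\tfrac67-\log 7)\big)\le \exp(-1.08\,k)$. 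A noticeably smaller constant in place of $7$ would violate $\log c\ge 2-1/c$, so the choice of $7$ is essentially what makes the clean conclusion $\exp(-k)$ hold.
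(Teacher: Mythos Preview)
Your proof is correct; this is exactly the textbook exponential-moment derivation of the Chernoff tail bound, and the numerical check at $c=7$ goes through as you say. The paper itself does not prove this lemma: it is stated as one of the standard Chernoff inequalities with a reference to~\cite{JLR2000}, so there is no in-paper argument to compare against. One tiny cosmetic remark: the clause ``and $k\ge 1$'' in your reduction is neither needed nor used---the argument only requires $k>0$ and $k/\mu\ge 7$, both of which follow from $p>0$ and the hypothesis $k\ge 7\mu$.
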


We use the following lemma (see e.g.~\cite{Hoeffding1963}) to bound the value of random variables that are distributed according to the hypergeometric distribution $\operatorname{Hypergeometric}(N, K, n)$, where from $N$ objects, of which $K$ are considered a success, $n$ are drawn without replacement.

\begin{lemma}\label{lem:hypergeometric}
	If $X \sim \operatorname{Hypergeometric}(N, K, n)$ and $t > 0$, then
	\[\Prob \left(X < \left(\frac{K}{N} - t\right)n\right) < \exp\left(-2t^2n\right)\, .\]
\end{lemma}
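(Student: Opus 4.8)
The statement is the classical tail bound for the hypergeometric distribution, and the plan is to reduce the without‑replacement setting to the binomial (with‑replacement) one and then run a Chernoff‑type estimate. Write $X=\sum_{i=1}^{n}Y_i$, where, after fixing a uniformly random order on the $n$ draws made without replacement, $Y_i\in\{0,1\}$ indicates whether the $i$‑th draw is a success; thus $\Exp(X)=n K/N$. Let $Z=\sum_{i=1}^{n}Z_i$ with $Z_1,\dots,Z_n$ i.i.d.\ $\operatorname{Bernoulli}(K/N)$, so that $Z\sim\operatorname{Bin}(n,K/N)$.

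First I would invoke Hoeffding's comparison inequality: for every convex $g\colon\mathbb{R}\to\mathbb{R}$ one has $\Exp\, g(X)\le\Exp\, g(Z)$ (sampling without replacement is no more spread out than sampling with replacement). Applying this to the convex function $g(x)=e^{-\lambda x}$ with a parameter $\lambda>0$ gives
\[
\Exp\, e^{-\lambda X}\ \le\ \Exp\, e^{-\lambda Z}\ =\ \Bigl(1-\tfrac{K}{N}+\tfrac{K}{N}e^{-\lambda}\Bigr)^{n}.
\]
Then a standard Markov step with $s=tn$ yields
\[
\Prob\!\left(X<\tfrac{K}{N}n-s\right)\ \le\ e^{\lambda\left(\frac{K}{N}n-s\right)}\,\Exp\, e^{-\lambda X}\ \le\ e^{\lambda\left(\frac{K}{N}n-s\right)}\Bigl(1-\tfrac{K}{N}+\tfrac{K}{N}e^{-\lambda}\Bigr)^{n}.
\]
Optimising over $\lambda>0$ is precisely the computation underlying Hoeffding's inequality for a sum of $n$ independent summands each taking values in an interval of length $1$, and it produces $\exp(-2s^2/n)=\exp(-2t^2 n)$, as required. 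As an alternative to the convex‑domination step one may instead use that the indicators $Y_1,\dots,Y_n$ are negatively associated, so the Chernoff–Hoeffding bound valid for independent Bernoulli variables transfers verbatim; or run a Doob martingale argument on $M_j=\Exp[X\mid Y_1,\dots,Y_j]$ and apply the Azuma–Hoeffding inequality in its sharp form for martingale increments confined to intervals of length $1$.

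The one genuinely delicate point is securing the optimal constant $2$ in the exponent. A crude martingale bound with $\pm 1$ increments only gives $\exp(-t^2 n/2)$, and the binomial Chernoff bound in the form of Lemma~\ref{lem:Chernoff1} is likewise too weak here; both the convex‑domination route and the sharp Azuma–Hoeffding route repair this because each summand varies over an interval of length exactly $1$ rather than $2$. Since the result is entirely classical, the most economical write‑up is simply to cite~\cite{Hoeffding1963} and carry out the final optimisation, noting that the claim is an immediate special case of the general Hoeffding bound for bounded independent summands combined with the comparison between sampling with and without replacement.
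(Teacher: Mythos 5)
Your proposal is correct and matches the paper's intended justification: the paper states this lemma without proof, referring the reader to Hoeffding's 1963 paper, and your sketch is an accurate reconstruction of exactly that classical argument (convex domination of sampling without replacement by sampling with replacement, followed by the exponential-moment optimisation for $[0,1]$-valued summands, which yields the sharp constant $2$). Nothing further is needed beyond the citation you already identify.
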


Next we focus on random graphs. The following lemma provides some useful properties
which are very likely to hold in a 
random graph $G_{n,p}$. The proof
is a standard applications of the above concentration bounds.

\begin{lemma}\label{lem:gnp2}
Let $\beta>0$ and let $p=p(n)\in (0,1)$ such that $pn\rightarrow \infty$. If we generate a random graph $G \sim G_{n,p}$ on the vertex set $[n]$, then a.a.s. the following properties hold:
\begin{enumerate}
\item $e_G(A,B) \geq 0.5 p|A||B|$ for every pair of disjoint sets $A,B\subset [n]$ of size 
$|A|,|B|\geq \beta n$,
\item $d_G(v)\leq \max\{2np, 30 \log n\}$ for every $v\in V(G)$,
\item $e(G)\leq n^2p$. 
\end{enumerate}

\end{lemma}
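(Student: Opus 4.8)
The plan is to establish each of the three properties separately via a union bound over the relevant family of ``bad'' events, using the concentration inequalities collected above.

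For property (1), I would fix a pair of disjoint sets $A,B \subseteq [n]$ with $|A|,|B| \ge \beta n$. The number of edges $e_G(A,B)$ is distributed as $\operatorname{Bin}(|A||B|, p)$, so its expectation is $p|A||B|$. By the first part of Chernoff's inequality (Lemma~\ref{lem:Chernoff1}) with $\delta = 1/2$, we get
\[
\Prob\big(e_G(A,B) < 0.5\, p|A||B|\big) < \exp\!\left(-\tfrac{p|A||B|}{8}\right) \le \exp\!\left(-\tfrac{\beta^2 p n^2}{8}\right).
\]
There are at most $4^n$ choices of the pair $(A,B)$, and since $pn \to \infty$ we have $\tfrac{\beta^2 p n^2}{8} = \omega(n)$, so the union bound $4^n \exp(-\omega(n)) \to 0$ kills all bad events simultaneously.

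For property (2), fix a vertex $v$; its degree $d_G(v)$ is $\operatorname{Bin}(n-1,p)$ with expectation at most $pn$. I split into two regimes. If $2np \ge 30\log n$, apply the second part of Lemma~\ref{lem:Chernoff1} with $\delta = 1$ to bound $\Prob(d_G(v) > 2np) < \exp(-pn/3) \le \exp(-5\log n) = n^{-5}$ (using $pn \ge 15 \log n$ here), and a union bound over the $n$ vertices gives $o(1)$. If instead $2np < 30 \log n$, then $30\log n \ge 7 \Exp(d_G(v))$ eventually, so Lemma~\ref{lem:Chernoff2} applies with $k = 30\log n$, giving $\Prob(d_G(v) \ge 30\log n) \le \exp(-30\log n) = n^{-30}$, and again a union bound over $n$ vertices suffices. (One should double-check the constant bookkeeping so that the ``$7\Exp(X)$'' hypothesis of Lemma~\ref{lem:Chernoff2} holds; this is the only slightly delicate point, and it is routine.)

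For property (3), $e(G) \sim \operatorname{Bin}\!\big(\binom{n}{2}, p\big)$ has expectation $\binom{n}{2}p \le n^2 p / 2$. Applying Lemma~\ref{lem:Chernoff1} with $\delta = 1$ gives $\Prob(e(G) > n^2 p) < \exp(-\binom{n}{2}p/3) = \exp(-\omega(n)) = o(1)$, since $pn \to \infty$ forces $n^2 p = \omega(n)$. Finally, combining the three estimates by a union bound, a.a.s.\ all of (1)--(3) hold simultaneously. There is no real obstacle here; the only thing requiring minor care is matching the constants in (2) to the exact statements of Lemmas~\ref{lem:Chernoff1} and~\ref{lem:Chernoff2}, which is the kind of routine calculation the lemma statement flags as ``standard''.
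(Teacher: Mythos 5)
Your proposal follows the same route as the paper's proof: a Chernoff bound for each property followed by a union bound, with a two-regime case split for the degree bound. Properties (1) and (3) are handled exactly as in the paper and are correct (your treatment of (1) is in fact slightly more complete, since you cover all pairs with $|A|,|B|\ge\beta n$ rather than only those of size exactly $\beta n$).

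The one genuine slip is the location of the case split in (2), precisely the point you flagged as ``routine bookkeeping''. In your second regime you assume $2np < 30\log n$ and claim that this forces $30\log n \ge 7\,\Exp(d_G(v))$. It does not: $2np < 30\log n$ only yields $7(n-1)p < 105\log n$, so for $np$ between $\tfrac{30}{7}\log n$ and $15\log n$ the hypothesis $k \ge 7\Exp(X)$ of Lemma~\ref{lem:Chernoff2} can fail, and that range is covered by neither of your two arguments as written. The fix is to move the split point, which is what the paper does: if $(n-1)p \ge 4\log n$, the second part of Lemma~\ref{lem:Chernoff1} gives $\Prob(d_G(v) > 2np) < \exp(-(n-1)p/3) \le n^{-4/3}$, which survives the union bound over the $n$ vertices; if $(n-1)p < 4\log n$, then $7(n-1)p < 28\log n < 30\log n$, so Lemma~\ref{lem:Chernoff2} applies with $k = 30\log n$. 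Since $\max\{2np, 30\log n\}$ dominates the relevant quantity in either regime, this closes the gap, and the rest of your argument stands.
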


\begin{proof}
We prove (1) first. For any specific pair of sets $A,B$ of size $|A|=|B|=\beta n$ it holds that $\mathbb E (e_G(A,B)) = p |A||B|$, so applying Lemma~\ref{lem:Chernoff1} we obtain
\[\mathbb P \left(e_G(A,B) < 0.5 p (\beta n)^2\right) < \exp\left(-\frac18 p(\beta n)^2 \right)\, .\]
Taking union bound over all possible choices of $A$ the claim follows.

Now, to prove (2), observe that $\mathbb E (d_G(v)) = (n-1) p$ for any $v \in V$. If $(n-1)p\geq 4\ln n$, then 
Lemma~\ref{lem:Chernoff1} yields 
\[\mathbb P \left(d_G(v) > 2 np \right) <\exp \left( -\frac{(n-1)p}{3} \right)
<\exp \left( -\frac{4}{3} \ln n \right)\, .\]
If otherwise $(n-1)p \leq 4 \ln n$, then
Lemma~\ref{lem:Chernoff2} gives 
\[\mathbb P \left(d_G(v) > 30\ln n \right) < \exp \left( - 30\ln n \right) \, .\]
In either case, taking union bound over all possible choices of $v$ the claim follows.

To prove (3) notice that $\mathbb E (e(G)) = {n \choose 2} p < \frac{n^2p}{2}$. Applying Lemma~\ref{lem:Chernoff1} once again shows, that
\[\mathbb{P} \left(e(G) > n^2 p \right) < \exp \left( - \frac13 n^2p \right)=o(1)\,.\]
This proves the lemma.
\end{proof}

Lastly, by using a probabilistic argument, 
we prove two lemmas that help us find two
useful partitions of a graph with large minimum degree or
large connectivity, respectively.

\begin{lemma}\label{lem:vertex-partition}
		Let $\alpha > 0$, $k\geq 1$ be an integer, and 
		$G$ be a graph on $n$ vertices with 
		$\delta(G) \geq \alpha n$. 
		If $n$ is large enough, 
		there exists a partition 
		$V(G) = U_1 \cup U_2 \cup ... \cup U_k$ with
		\begin{enumerate}
			\item[(1)] $|U_i| = \lfloor \frac{n}{k} \rfloor$ or $|U_i| = \lceil \frac{n}{k} \rceil$, for all $i \in [k]$,
			\item[(2)] $e_{G}(v, U_i) \geq \frac{\alpha}{2}|U_i|$ for all $v \in V(G)$ and $i \in [k].$
		\end{enumerate}
\end{lemma}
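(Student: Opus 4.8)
The plan is to use the probabilistic method: take a uniformly random partition of $V(G)$ into $k$ parts of sizes $\lfloor n/k \rfloor$ or $\lceil n/k \rceil$, and show that with positive probability all the degree conditions in (2) hold simultaneously. First I would fix such a balanced partition of the labels (i.e. decide the sizes $n_1,\dots,n_k$ with $n_i\in\{\lfloor n/k\rfloor,\lceil n/k\rceil\}$ and $\sum n_i=n$), and then choose the actual assignment of vertices to the parts uniformly at random among all partitions with these prescribed sizes. For a fixed vertex $v$ and a fixed index $i$, the random variable $e_G(v,U_i)$ counts how many of the $d_G(v)\ge\alpha n$ neighbours of $v$ land in $U_i$; this is governed by the hypergeometric distribution $\operatorname{Hypergeometric}(n', d_G(v), n_i)$ (up to the harmless issue of whether $v$ itself is allowed in $U_i$ — one can either condition on $v$'s part or absorb this into a slightly weaker constant), whose mean is about $\frac{d_G(v)}{n}\,|U_i|\ge \alpha|U_i|$.

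Next I would apply the hypergeometric tail bound, Lemma~\ref{lem:hypergeometric}, with $\frac{K}{N}\ge\alpha$ and $t=\alpha/2$, to get
\[
\Prob\!\left(e_G(v,U_i) < \tfrac{\alpha}{2}\,|U_i|\right) \le \Prob\!\left(e_G(v,U_i) < \left(\tfrac{K}{N}-\tfrac{\alpha}{2}\right)|U_i|\right) < \exp\!\left(-2\left(\tfrac{\alpha}{2}\right)^2 |U_i|\right) = \exp\!\left(-\tfrac{\alpha^2}{2}\,|U_i|\right).
\]
Since $|U_i| = \Theta(n/k)$ and $k$ is a fixed integer, this probability is at most $\exp(-\Omega_\alpha(n))$. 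Taking a union bound over all $n$ vertices $v$ and all $k$ indices $i$ — that is, over $kn$ bad events — the total failure probability is at most $kn\cdot\exp(-\Omega_\alpha(n))$, which is $o(1)$, hence strictly less than $1$ once $n$ is large enough. Therefore a partition satisfying both (1) (by construction) and (2) (with positive probability) exists.

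The only mild subtlety — and the main thing to be careful about rather than a genuine obstacle — is that $e_G(v,U_i)$ does not follow a clean hypergeometric distribution unless we account for the vertex $v$ being placed somewhere among the parts: the set of $n_i$ "slots" of $U_i$ that the neighbours of $v$ can occupy depends slightly on whether $v\in U_i$. This is handled routinely, e.g.\ by first revealing which part contains $v$ and then sampling the remaining $n-1$ vertices, so that $e_G(v,U_i)$ is a sum of hypergeometric-type variables to which the same concentration applies (or simply by loosening $\frac{\alpha}{2}$ to any constant strictly below $\alpha$, which still suffices and makes the error terms disappear for large $n$). Everything else is a direct union bound over a linear number of exponentially small events.
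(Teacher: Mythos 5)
Your proposal is correct and follows essentially the same route as the paper: a uniformly random balanced partition, the observation that $e_G(v,U_i)$ is hypergeometric with mean at least $\alpha|U_i|$, the tail bound of Lemma~\ref{lem:hypergeometric} with $t=\alpha/2$ giving failure probability $\exp(-\alpha^2|U_i|/2)$, and a union bound over the $kn$ events. The paper does not even comment on the mild subtlety about the placement of $v$ itself that you flag, but your handling of it is fine and does not change the argument.
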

	
\begin{proof}
	We show that a random partition of the vertices a.a.s.~fulfils the properties, thus proving the statement for large $n$. 
	We choose a partition of $V(G)$ into $k$ sets $U_1, U_2, ..., U_k$ such that
	\[
	\left\lfloor \frac{n}{k} \right\rfloor \leq |U_1| \leq |U_2| \leq \ldots \leq |U_k| \leq \left\lceil \frac{n}{k} \right\rceil
	\]
	uniformly at random among all such partitions.  
	For any $v \in V(G)$ and $i\in [k]$, we thus have 
\[\Prob\left(e_G(v, U_i) = \ell \right) = \frac{{{d_G(v)} \choose {\ell}}{{n - d_G(v)} \choose {|U_i| - \ell}}}{{{n} \choose {|U_i|}}}\, .\]
Therefore, $e_G(v, U_i) \sim \operatorname{Hypergeometric}\left(n, d_G(v), |U_i|\right)$, 
and by Lemma~\ref{lem:hypergeometric} we have 
\[\Prob\left(e_G\left(v, U_i\right) < \frac{\alpha}{2}|U_i|\right) < \Prob\left(e_G(v, U_i) < \left(\frac{d_G(v)}{n} - \frac{\alpha}{2}\right)|U_i|\right) < \exp\left( - \frac{1}{2} \alpha^2 |U_i| \right)\,.\] 
Taking union bound over all possible choices of $v \in V(G)$ and $i \in [k]$, the claim follows.
\end{proof}

\begin{lemma}\label{lem:partition_connected}
For every $\beta \in (0,1)$ there exists a constant $\gamma>0$
such that the following holds for every large enough $n$:
let $G$ be any $\beta n$-vertex-connected graph on at most $n$ vertices.
Then there exists a partition $G=G^1\cup G^2$
such that both parts are
$\gamma n$-vertex-connected on $V(G)$.
\end{lemma}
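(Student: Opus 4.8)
The plan is to take a uniformly random $2$-colouring of $E(G)$: independently, each edge is placed into $G^1$ or into $G^2$, with probability $\tfrac12$ each. I will show that for a suitable constant $\gamma=\gamma(\beta)\in(0,\tfrac{\beta}{2})$ both $G^1$ and $G^2$ are a.a.s.\ $\gamma n$-vertex-connected on $V(G)$; since the two colour classes partition $E(G)$, this gives the existence of the desired partition for all large $n$. Write $m:=|V(G)|$, so that $\beta n<m\le n$ and in particular $m>\gamma n$. It then suffices to prove that, with probability $1-o(1)$, for every $i\in\{1,2\}$ and every $T\subseteq V(G)$ with $|T|<\gamma n$ the graph $G^i-T$ is connected.

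Fix such an $i$ and $T$, and set $H:=G-T$. Since $\kappa(G)\ge\beta n$, we get $\kappa(H)\ge\beta n-|T|\ge(\beta-\gamma)n$, hence the edge-connectivity of $H$ is also at least $(\beta-\gamma)n$; in particular $H$ is connected and every bipartition of $V(H)$ into non-empty parts $A,B$ satisfies $e_H(A,B)\ge(\beta-\gamma)n$. Now $G^i-T$ is exactly the subgraph of $H$ formed by the edges of colour $i$, so it is disconnected if and only if for some such bipartition $\{A,B\}$ all $e_H(A,B)$ edges of $H$ between $A$ and $B$ received colour $3-i$, an event of probability $2^{-e_H(A,B)}$. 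A union bound over bipartitions therefore gives
\[
\Prob\big(G^i-T\text{ disconnected}\big)\ \le\ \sum_{\emptyset\ne A\subsetneq V(H)}2^{-e_H(A,V(H)\setminus A)}\ =\ \sum_{k\ge(\beta-\gamma)n}N(k)\,2^{-k},
\]
where $N(k)$ denotes the number of subsets $A$ of $V(H)$ with $e_H(A,V(H)\setminus A)=k$.

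To estimate this sum I split it at $k=3n$. For $k\ge 3n$ the trivial bound $N(k)\le 2^{m}$ yields $\sum_{k\ge 3n}N(k)2^{-k}\le 2^{m-3n+1}\le 2^{-2n+1}$. For $(\beta-\gamma)n\le k<3n$ the crucial input is the classical bound on the number of near-minimum cuts (Karger): a graph on at most $n$ vertices with edge-connectivity $\lambda$ has, for every real $\alpha\ge 1$, at most $O(n^{2\alpha})$ bipartitions whose edge-boundary has size at most $\alpha\lambda$. Applying this to $H$ with $\alpha=3/(\beta-\gamma)$ gives $\sum_{(\beta-\gamma)n\le k<3n}N(k)=n^{O(1/\beta)}$, so this part of the sum is at most $n^{O(1/\beta)}\,2^{-(\beta-\gamma)n}$. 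Hence $\Prob(G^i-T\text{ disconnected})\le n^{O(1/\beta)}2^{-(\beta-\gamma)n}+2^{-2n+1}$. Summing over the at most $n\binom{n}{\lfloor\gamma n\rfloor}\le n\,(e/\gamma)^{\gamma n}$ choices of $T$, and over $i\in\{1,2\}$, the probability that $G^1$ or $G^2$ fails to be $\gamma n$-vertex-connected is at most $2n\,(e/\gamma)^{\gamma n}\big(n^{O(1/\beta)}2^{-(\beta-\gamma)n}+2^{-2n+1}\big)$. Since $\gamma\log(e/\gamma)\to 0$ as $\gamma\to 0$, we may fix $\gamma=\gamma(\beta)$ small enough that $(e/\gamma)^{\gamma n}\le 2^{(\beta-\gamma)n/2}$ for all large $n$; then this bound is $o(1)$, and in particular less than $1$ for large $n$, so the random colouring produces a valid partition with positive probability. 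This proves the lemma.

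The step I expect to be the real obstacle is the cut-counting estimate in the range $(\beta-\gamma)n\le k<3n$. A plain union bound over all bipartitions (using only $N(k)\le 2^{m}$) is hopeless, since a ``barely $\beta n$-connected'' graph — for instance two cliques on $\tfrac{n}{2}$ vertices joined by a perfect matching, which is $\tfrac{n}{2}$-connected — has exponentially many bipartitions whose edge-boundary has size only $\Theta(n)$, so $\sum_k 2^{m}2^{-k}$ does not converge. It is precisely because only polynomially many cuts are this small compared with the edge-connectivity $\Theta(n)$ of $H$ that the sum stays small, and a self-contained proof would have to establish a suitable form of this fact, e.g.\ through Karger's random-contraction argument.
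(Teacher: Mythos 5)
Your proof is correct, but it takes a genuinely different route from the paper's. The paper argues via Menger's theorem: for each pair $u,v$ it fixes a family of $\tfrac{\beta n}{2}$ internally vertex-disjoint $u$--$v$ paths of length at most $\tfrac{4}{\beta}$ (possible because the paths are internally disjoint, so only few of them can be long), observes that each such short path lands entirely in $G^i$ with probability at least $2^{-4/\beta}$, and applies Chernoff plus a union bound over the $O(n^2)$ pairs to conclude that a.a.s.\ every pair retains $\gamma n$ disjoint paths in each colour class, whence $\gamma n$-vertex-connectivity by Menger again. Your argument instead works with edge cuts of $G-T$ and, as you rightly flag, hinges on Karger's bound on the number of near-minimum cuts to tame the union bound; this is a tool the paper does invoke elsewhere (its Theorem~\ref{thm:karger}, used in the proof of Theorem~\ref{thm:vertex-connectivity}), so relying on it is consistent with the paper's toolkit, and your use of it (with $t=3/(\beta-\gamma)\ge 1$ applied to the $(\beta-\gamma)n$-edge-connected graph $H=G-T$) is sound. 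The trade-offs: the paper's route is self-contained modulo Chernoff and gives an explicit, exponentially small $\gamma=\beta\,2^{-4/\beta-2}$, whereas yours yields a larger $\gamma=\Theta(\beta/\log(1/\beta))$ but pays for it with the extra union bound over the roughly $\binom{n}{\lfloor\gamma n\rfloor}$ choices of $T$, which is exactly why $\gamma$ must be taken small enough that $(e/\gamma)^{\gamma n}$ is beaten by $2^{-(\beta-\gamma)n/2}$ --- a step you handled correctly.
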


\begin{proof}
Set $\gamma = \beta \left(\frac{1}{2}\right)^{4/\beta + 2}$.
Let $u,v\in V(G)$ be different vertices. 
By Menger's Theorem (see e.g. Theorem 4.2.17 in~\cite{W2001}) there exist $\beta n$
internally vertex-disjoint paths
between $u$ and $v$. At least
$\frac{\beta n}{2}$ of these paths have length
at most $\frac{4}{\beta}$. From now on, fix a family of 
such $\frac{\beta n}{2}$ paths and denote it with $P_{\{u,v\}}$.

In the following we will consider a random partition $G=G^1\cup G^2$, where each edge of $G$
is either added to $G^1$ or $G^2$ uniformly at random
with probability $\frac{1}{2}$ and independently of all other choices. It is enough to show that a.a.s.~such a partition
has the desired property.

Let $i\in [2]$. A path from $P_{\{u,v\}}$ is contained in
$G^i$ with probability at least
$\left(\frac{1}{2}\right)^{4/\beta}$. Hence, the random variable 
$X^i_{\{u,v\}}$, which counts the number of such paths
landing in $G^i$,
stochastically dominates 
$\operatorname{Bin}\left(|P_{\{u,v\}}|,\left(\frac{1}{2}\right)^{4/\beta}\right)$.
By an application of Chernoff (Lemma~\ref{lem:Chernoff1})
and the union bound, it follows a.a.s.~that
for every $i\in [2]$ and every distinct $u,v\in V(G)$,
\[
X^i_{\{u,v\}} \geq \frac{1}{2}|P_{\{u,v\}}| \cdot \left(\frac{1}{2}\right)^{4/\beta} = \gamma n~ .
\]
But this means that a.a.s.~for both $i\in [2]$,
$G^i$ has at least $\gamma n$ internally vertex-disjoint paths between any pair of vertices, and hence is
$\gamma n$-vertex-connected.
\end{proof}

\subsection{Regularity tools}

We will use standard tools from regularity theory in combination with other techniques as used by Das and Treglown~\cite{DT_PeturbedRamsey} for their perturbed Ramsey results.
First, we introduce the relevant terminology. Let a graph $G$ be given.
The \emph{density} of a pair $(A,B)$ of disjoint subsets of $V(G)$ is defined as $d_G(A,B)=\frac{e_G(A,B)}{|A| |B|}$.
For $\eps>0$ we say that $(A,B)_G$ is \emph{$\eps$-regular}, if for all subsets $X \subseteq A$ and $Y \subseteq B$ with $|X| \ge \eps |A|$ and $|Y| \ge \eps |B|$ we have $|d_G(A,B)-d_G(X,Y)|\le \eps$.
This notion allows us to control the distribution of the edges between $A$ and $B$.
When passing to smaller sets this property is preserved with adjusted parameters.

\begin{lemma}[Regularity slicing, Fact 1.5~in~\cite{KS_RegularityApplications}]
	\label{lem:slicing}
	Let $(A,B)_G$ be an $\eps$-regular pair of density at least $\delta$ and let $\eta > \eps$.
	For any $X \subseteq A$ and $Y \subseteq B$ of size at least $\eta|A|$ and $\eta |B|$ the pair $(X,Y)_G$ is $\eps'$-regular with density at least $\delta'$, where $\eps'= \max \left\{ \frac{\eps}{\eta} ,2 \eps \right\}$ and $|\delta'-\delta| <  \eps$.
\end{lemma}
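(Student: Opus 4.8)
The plan is to derive both conclusions from just two applications of the assumed $\eps$-regularity of $(A,B)_G$ together with the triangle inequality; no probabilistic input is needed. The two terms in $\eps' = \max\{\eps/\eta,\, 2\eps\}$ each play a distinct bookkeeping role: the term $\eps/\eta$ ensures that a set which is ``large'' inside $X$ or $Y$ is still ``large'' inside $A$ or $B$, so that the original regularity hypothesis can be invoked; the term $2\eps$ is the error that a two-step triangle-inequality argument forces us to absorb.

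First I would handle the density claim. Since $\eta > \eps$, we have $|X| \ge \eta|A| > \eps|A|$ and likewise $|Y| > \eps|B|$, so $\eps$-regularity of $(A,B)_G$ applies to the pair $(X,Y)$ itself and gives $|d_G(X,Y) - d_G(A,B)| \le \eps$. Combined with $d_G(A,B) \ge \delta$ this yields $d_G(X,Y) \ge \delta - \eps$, which is the asserted density bound (with $\delta'$ taken to be, say, $d_G(X,Y)$, so that $|\delta'-\delta| < \eps$). We keep the estimate $|d_G(X,Y) - d_G(A,B)| \le \eps$ for use in the next step.

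Second, the regularity of $(X,Y)_G$. Fix any $X' \subseteq X$ and $Y' \subseteq Y$ with $|X'| \ge \eps'|X|$ and $|Y'| \ge \eps'|Y|$; we must bound $|d_G(X',Y') - d_G(X,Y)|$ by $\eps'$. Chaining inequalities, $|X'| \ge \eps'|X| \ge \eps'\eta|A| \ge \eps|A|$, where the last step uses $\eps' \ge \eps/\eta$, hence $\eps'\eta \ge \eps$; symmetrically $|Y'| \ge \eps|B|$. Therefore $\eps$-regularity of $(A,B)_G$ applies to $(X',Y')$ and gives $|d_G(X',Y') - d_G(A,B)| \le \eps$. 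Combining this with the estimate $|d_G(X,Y) - d_G(A,B)| \le \eps$ from the previous paragraph via the triangle inequality yields $|d_G(X',Y') - d_G(X,Y)| \le 2\eps \le \eps'$, which is precisely what $\eps'$-regularity of $(X,Y)_G$ requires.

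There is no serious obstacle here: the statement is essentially a definition-unwinding exercise. The only point that needs care is verifying the inequality chain $|X'| \ge \eps'|X| \ge \eps'\eta|A| \ge \eps|A|$ (and its mirror image for $Y'$), i.e.\ confirming that the factor $\eps/\eta$ inside the maximum is exactly tuned to translate ``large relative to $X,Y$'' into ``large relative to $A,B$'', while the factor $2\eps$ is exactly what is needed to cover the two-term triangle-inequality bound.
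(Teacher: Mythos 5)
Your proof is correct and is exactly the standard argument for this fact (which the paper cites from Koml\'os--Simonovits without reproducing a proof): apply the $\eps$-regularity of $(A,B)_G$ once to $(X,Y)$ to control the density, and once more to subsets $X'\subseteq X$, $Y'\subseteq Y$ that are large in $A,B$ thanks to $\eps'\ge\eps/\eta$, then conclude by the triangle inequality using $\eps'\ge 2\eps$. The only cosmetic point is that one should read the conclusion simply as $d_G(X,Y)\ge\delta-\eps$ (i.e.\ take $\delta'=\delta-\eps$), since $\delta$ is only a lower bound on $d_G(A,B)$ and so $\delta'=d_G(X,Y)$ need not satisfy $|\delta'-\delta|<\eps$; this does not affect the substance.
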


We will further use the following consequence of the classical Regularity Lemma~\cite{KS_RegularityApplications} together with Tur\'{a}n's theorem, which allows us to find a family of sets such that all pairs are regular.

\begin{lemma}[Corollary~2.4 in~\cite{DT_PeturbedRamsey}]\label{lem:reg-turan}
	For $0 < 3 \eps \le \delta <1$  there exists an $\eta>0$ and $n_0$ such that for all $n \ge n_0$ and $r \ge 2$ the following holds.
	For any $n$ vertex graph $G$ of density at least $\frac{r-2}{r-1}+ \delta$ there are pairwise disjoint sets $V_1,\dots,V_r \subseteq V(G)$ such that $|V_i| \ge \eta n$ for $1 \le i \le r$, and $(V_i,V_j)_G$ is $\eps$-regular with density at least $\frac{\delta}{2}$ for $1 \le i < j \le r$.
\end{lemma}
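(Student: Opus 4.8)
The plan is to derive the statement in the standard way from the Regularity Lemma~\cite{KS_RegularityApplications} together with Tur\'an's theorem~\cite{Turan}. First I would fix a constant $\eps_0 \le \eps$ that is sufficiently small relative to $\delta$, and apply the Regularity Lemma to $G$ with regularity parameter $\eps_0$ and with a large lower bound on the number of parts, obtaining an equitable partition $V(G) = V_0 \cup V_1 \cup \dots \cup V_M$ with $|V_0| \le \eps_0 n$, common cluster size $m = |V_i| \ge (1-\eps_0)n/M$, at most $\eps_0 M^2$ pairs that fail to be $\eps_0$-regular, and $M$ lying between that lower bound and some $M_0 = M_0(\eps_0)$. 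Since $\eps_0$ and $M_0$ depend only on $\eps$ and $\delta$ (not on $r$), this already fixes $\eta := (1-\eps_0)/M_0$ uniformly in $r$, and forces $n_0$ to be at least the threshold of the Regularity Lemma.

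Next I would form the reduced graph $R$ on the vertex set $[M]$, joining $i$ and $j$ exactly when $(V_i,V_j)_G$ is $\eps_0$-regular and has density at least $\delta/2$. The crux is to bound $e(R)$ from below. I would classify the edges of $G$ into: those incident to $V_0$ (at most $\eps_0 n^2$ of them), those lying inside a single cluster (at most $M\binom m2 \le n^2/(2M)$), those in a pair that is not $\eps_0$-regular (at most $\eps_0 M^2 m^2 \le \eps_0 n^2$), those in an $\eps_0$-regular pair of density below $\delta/2$ (at most $\tfrac{\delta}{2}\binom M2 m^2 \le \tfrac{\delta}{4}n^2$), and the remaining ones, all of which are accounted for by edges of $R$ and therefore number at most $e(R)m^2$. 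Since $e(G) \ge \left(\tfrac{r-2}{r-1}+\delta\right)\binom n2$, combining these bounds with $m \le n/M$ and taking $\eps_0$ small enough and $M$ (hence $n$) large enough yields $e(R) > \tfrac{r-2}{r-1}\cdot\tfrac{M^2}{2} \ge \mathrm{ex}(M,K_r)$, the Tur\'an number of $K_r$. Hence by Tur\'an's theorem $R$ contains a copy of $K_r$.

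Let $V_{i_1},\dots,V_{i_r}$ be the clusters spanning such a $K_r$ in $R$. By construction they are pairwise disjoint, each of size $m \ge (1-\eps_0)n/M_0 = \eta n$, pairwise $\eps_0$-regular and hence $\eps$-regular (as $\eps_0 \le \eps$), and pairwise of density at least $\delta/2$. After relabelling, these are the desired sets $V_1,\dots,V_r$.

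The only genuinely delicate point is the choice of $\eps_0$: it must be small enough that the combined loss coming from the exceptional set $V_0$, from the edges inside clusters, and from the irregular pairs is swallowed by a $\delta$-sized fraction of $\binom n2$, while at the same time satisfying $\eps_0 \le \eps$ so that the pairs we output are genuinely $\eps$-regular. The hypothesis $3\eps \le \delta$ is precisely what guarantees there is room to meet both requirements; everything else is routine bookkeeping absorbed by taking $M_0$ and $n_0$ large.
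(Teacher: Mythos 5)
Your proof is correct and follows exactly the route the paper indicates for this lemma (which it only cites, as Corollary~2.4 of Das--Treglown, without reproving it): the standard reduced-graph argument combining Szemer\'edi's Regularity Lemma with Tur\'an's theorem, with the edge-count bookkeeping and the key point that $\eta$ and $n_0$ can be chosen uniformly in $r$ (since the regularity parameters depend only on $\eps$ and $\delta$) both handled properly. One minor remark: the hypothesis $3\eps\le\delta$ is not really what your argument uses --- since you introduce a free auxiliary parameter $\eps_0\le\min\{\eps,c(\delta)\}$, the losses are absorbed regardless, and the stated constraint is simply inherited from the formulation in the source.
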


Once the regular pairs are in place we want to use them to combine structures that we can find within each of the sets.
For this we ideally want large sets such that all small subsets have many common neighbours.
The next lemma follows from the dependent random choice technique~\cite{FS_DRC}.

\begin{lemma}[Lemma~2.5 in~\cite{DT_PeturbedRamsey}]\label{lem:reg_drc}
	Given $r \ge 2$, $\delta,\beta>0$, $\ell \in \mathbb{N}$ there exists a $\nu>0$ such that for $0<\eps<\min \{ \frac{1}{2r}, \frac{\delta}{2} \}$ there exists an $m_0$ such that the following holds for any $m \ge m_0$.
	Let $V_1,\dots,V_r$ be disjoint sets of vertices of size $m$ from a graph $G$ such that $(V_1,V_i)_G$ is $\eps$-regular with density at least $\delta$ for $2 \le i \le r$.
	Then there exists a subset $U \subseteq V_1$ of size at least $m^{1-\beta}$ such that any $\ell$ vertices from $U$ have at least $\nu m$ common neighbours in each of the sets $V_i$ for $2 \le i \le r$.
\end{lemma}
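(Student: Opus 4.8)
The plan is to run the dependent random choice technique, with the one twist that the number of sampled vertices is taken to grow like $\Theta(\log m)$; this is exactly what lets $\nu$ be an absolute constant (depending only on $r,\delta,\ell,\beta$) while $U$ need only be polynomially smaller than $V_1$. The only fact I would borrow from regularity is that, since $(V_1,V_i)_G$ is $\eps$-regular of density at least $\delta$, applying the definition with $Y=V_i$ forces the set $W_i$ of vertices of $V_1$ having fewer than $(\delta-\eps)m$ neighbours in $V_i$ to satisfy $|W_i|\le\eps m$. Hence $W:=\bigcup_{i=2}^{r}W_i$ has $|W|\le(r-1)\eps m<\tfrac{m}{2}$ (using $\eps<\tfrac{1}{2r}$), and every $v\in V_1\setminus W$ has at least $(\delta-\eps)m>\tfrac{\delta}{2}m$ neighbours in each of $V_2,\dots,V_r$.

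For a parameter $t=t(m)$ to be fixed later, I would sample independently, for each $i\in\{2,\dots,r\}$, a tuple $T_i$ of $t$ vertices of $V_i$ uniformly at random with repetition, and let $A\subseteq V_1$ be the set of vertices adjacent to every coordinate of every $T_i$. Restricting the sum $\Exp|A|=\sum_{v\in V_1}\prod_{i=2}^{r}\big(e_G(v,V_i)/m\big)^{t}$ to $v\in V_1\setminus W$ yields $\Exp|A|\ge\tfrac{m}{2}(\delta-\eps)^{(r-1)t}$. Call an $\ell$-element set $S\subseteq V_1$ \emph{bad} if it has fewer than $\nu m$ common neighbours in $V_i$ for some $i$; if such an $S$ lies in $A$ then, for the witnessing index, all $t$ coordinates of $T_i$ fell into that common neighbourhood, an event of probability less than $\nu^{t}$. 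So the expected number of bad $\ell$-subsets of $A$ is at most $\binom{m}{\ell}\nu^{t}\le m^{\ell}\nu^{t}$, and by linearity of expectation some outcome of the $T_i$ satisfies $|A|-\#\{\text{bad }\ell\text{-subsets of }A\}\ge\Exp|A|-m^{\ell}\nu^{t}$. Deleting one vertex from each bad $\ell$-subset of $A$ then produces a set $U\subseteq V_1$ with no bad $\ell$-subset and $|U|\ge\Exp|A|-m^{\ell}\nu^{t}$, so the whole lemma comes down to making this bound at least $m^{1-\beta}$.

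For the tuning I would use $\eps<\tfrac{\delta}{2}$, so $\delta-\eps>\tfrac{\delta}{2}$, and set $t:=\big\lceil\tfrac{\beta\log m}{2(r-1)\log(2/\delta)}\big\rceil$. This keeps $\Exp|A|\ge\tfrac{m}{2}(\delta/2)^{(r-1)t}\ge c\,m^{1-\beta/2}$ for a constant $c=c(r,\delta)>0$, while forcing $m^{\ell}\nu^{t}\le m^{\ell-\beta\log(1/\nu)/(2(r-1)\log(2/\delta))}$. Choosing $\nu=\nu(r,\delta,\ell,\beta)>0$ small enough that $\beta\log(1/\nu)\ge 2(r-1)(\ell-1+\beta)\log(2/\delta)$ makes the last exponent at most $1-\beta$, so $m^{\ell}\nu^{t}\le m^{1-\beta}=o(m^{1-\beta/2})$ and hence $\Exp|A|-m^{\ell}\nu^{t}\ge m^{1-\beta}$ once $m\ge m_0$. (For $\ell=1$ one can skip all of this and just take $U=V_1\setminus W$.)

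The hard part will be precisely this balancing of $t$: it must grow with $m$ fast enough to beat the roughly $m^{\ell}$ candidate bad subsets, yet slowly enough that $\Exp|A|$ stays comfortably above the target $m^{1-\beta}$ --- a logarithmic rate threads exactly this needle. It is important that $\nu$ is fixed before $\eps$ and that the crude estimate $\delta-\eps>\tfrac{\delta}{2}$ is available, so that $t$ and $m_0$ can be pinned down with no circular dependence; the remaining steps are just the union bound and elementary calculus.
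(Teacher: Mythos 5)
Your argument is correct and is essentially the standard dependent random choice proof that the paper relies on by citation (the lemma is quoted from Das--Treglown, whose Lemma~2.5 is proved exactly this way: sample $\Theta(\log m)$ vertices from each $V_i$, use regularity only to bound the degrees from $V_1\setminus W$, and delete a vertex from each bad $\ell$-set). The quantifier order is handled properly ($\nu$ depends only on $r,\delta,\ell,\beta$), so there is nothing to add.
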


\subsection{Positional games tools}

Helpful tools for the study of positional games are potential functions. One of the central results in this area that was proven by the use of such a tool, is the following theorem, usually referred to as Beck's Criterion. Note that in a $(p:q)$ Maker-Breaker game, in each round Maker and Breaker claim $p$ and $q$ elements of the board, respectively.

\begin{thm}[Beck's Criterion, Theorem~1 in~\cite{Beck1982}]\label{thm:beck_criterion}
Let $(X,\mathcal{F})$ be a hypergraph satisfying
$$
\sum_{F\in \mathcal{F}} (1+q)^{-|F|/p + 1} < 1
$$
then Breaker has a strategy to win the $(p:q)$ Maker-Breaker game on $(X,\mathcal{F})$, independent of who starts the game.
\end{thm}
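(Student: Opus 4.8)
The plan is to use the classical weight-function (potential) argument of Erd\H{o}s and Selfridge, in the biased form due to Beck. During a play of the $(p:q)$ game call a winning set $F\in\mathcal F$ \emph{alive} if $\breaker$ contains no element of $F$, and for an alive $F$ let $f(F)$ be the number of its elements that are still free, so that $f(F)=|F|-|F\cap\maker|$ because $\breaker$ avoids $F$. Put $w(F)=(1+q)^{-f(F)/p}$ for alive $F$ and $w(F)=0$ otherwise, and let $\Phi=\sum_{F\in\mathcal F}w(F)$ be the current potential. At the start $\Phi=\sum_F(1+q)^{-|F|/p}=\tfrac{1}{1+q}\sum_F(1+q)^{-|F|/p+1}<\tfrac{1}{1+q}$ by hypothesis, while at the moment Maker completes a winning set $F$ that set is still alive with $f(F)=0$, so $w(F)=1$ and hence $\Phi\ge 1$. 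It therefore suffices to give Breaker a strategy under which $\Phi<1$ holds at all times. For a free element $x$ write $\dang(x)=\sum_{F\text{ alive},\,x\in F}w(F)$; note $\dang(x)\le\Phi$ always.

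Breaker's strategy is greedy: on each of his turns he takes his (at most) $q$ elements one at a time, each time a currently free element of maximum $\dang$. I will analyse the game by pairing Breaker's $i$-th move with Maker's $(i+1)$-th move, so that each paired round begins with a Breaker move, and the analysis rests on two elementary estimates. \emph{(i)} If on her move Maker claims a set $M$ of $p$ free elements then, writing $j_F=|F\cap M|\in\{0,\dots,p\}$, the potential increases by $\sum_{F\text{ alive}}w(F)\bigl((1+q)^{j_F/p}-1\bigr)$, which by convexity of $s\mapsto(1+q)^s$ on $[0,1]$ is at most $\tfrac{q}{p}\sum_{F\text{ alive}}w(F)\,j_F=\tfrac{q}{p}\sum_{x\in M}\dang(x)$. \emph{(ii)} On one of Breaker's moves let $\Delta$ be the total weight his greedy picks remove from $\Phi$, that is, the sum over his picks of the $\dang$-value each had at the moment it was taken; since the $\dang$-value of any element only decreases as alive sets are killed, every one of these picks had $\dang$ at least as large as the $\dang$ that any element still free after the move has at the end of it, so after the move every still-free element $z$ satisfies $\dang(z)\le\Delta/q$.

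Now I combine the two estimates. Let $\Phi_i$ denote the potential immediately after Maker's $i$-th move. After Breaker's $i$-th move the potential is $\Phi_i-\Delta$ for some $\Delta\ge 0$, and by \emph{(ii)} every free element then has $\dang\le\Delta/q$; hence by \emph{(i)} Maker's $(i+1)$-th move raises the potential by at most $\tfrac{q}{p}\cdot p\cdot\tfrac{\Delta}{q}=\Delta$, so $\Phi_{i+1}\le\Phi_i$. For the very first move one uses instead that $\dang(x)\le\Phi_0$ for each of Maker's $p$ picks, giving $\Phi_1\le\Phi_0+\tfrac{q}{p}\cdot p\cdot\Phi_0=(1+q)\Phi_0<1$. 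Therefore $\Phi_i\le\Phi_1<1$ for every $i$, and since $\Phi$ changes only during Maker's moves it stays below $1$ throughout the game; in particular Maker never completes a winning set, so Breaker wins. If Breaker is the first player he wastes his opening move on arbitrary free elements and then follows the strategy above from Maker's first move on; deleting extra elements only kills winning sets and so does not increase the starting potential, and the same argument applies. The step that genuinely needs the biased bookkeeping is \emph{(ii)}: for $q=1$ it is the immediate Erd\H{o}s--Selfridge observation that $\dang$ never increases, whereas for $q>1$ one must see that Breaker's $q$ greedy picks together destroy enough weight to absorb Maker's gain from $p$ picks, which is exactly what emerges from comparing his $q$ picks with Maker's next at most $p$ picks; the convexity inequality $(1+q)^{j/p}\le 1+qj/p$ in \emph{(i)} is the device that turns the multiplicative weight update into this additive, $\dang$-based estimate, in which the factors $q/p$ and $p$ cancel.
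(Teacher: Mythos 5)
The paper does not prove this statement; it is quoted as Theorem~1 of Beck's 1982 paper and used as a black box. Your argument is a correct, complete rendition of the standard biased Erd\H{o}s--Selfridge potential proof (which is essentially Beck's original one): the normalization $(1+q)\Phi_0<1$ absorbs Maker's first move, the convexity bound $(1+q)^{j/p}\le 1+qj/p$ controls her subsequent gains, and the greedy danger argument shows Breaker's $q$ removals dominate, so no issues to report.
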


Although the result guarantees a winning strategy for Breaker
it can also be beneficial for the description of strategies for Maker. Here the idea is to redefine a Maker-Breaker game in such a way that the roles of both players are switched.
Such an approach is fairly standard and has been applied multiple times in the literature (see e.g.~\cite{BeckBook,HKSS_PosGames}). In order to avoid repetitions of such an argument, we provide a reformulation of Beck's Criterion which can be applied directly when we study Maker strategies for $(1:b)$ games throughout the paper.
Given a hypergraph $(X,\mathcal{F})$ we define the transversal hypergraph $(X,\mathcal{F}^\ast)$ as follows:
\[
\mathcal{F}^\ast := 
\left\{
S\subset X:~ (\forall F\in \mathcal{F}:~ S\cap F\neq \varnothing)
\right\}~ .
\]

\begin{cor}
\label{cor:beck_transversal}
Let $(X,\mathcal{F})$ be a hypergraph satisfying
$$
\sum_{F\in \mathcal{F}} 2^{-|F|/b + 1} < 1
$$
then Maker has a strategy to win the $(1:b)$ Maker-Breaker game on $(X,\mathcal{F}^\ast)$, independent of who starts the game.
\end{cor}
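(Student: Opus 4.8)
The goal is to derive Corollary~\ref{cor:beck_transversal} from Beck's Criterion (Theorem~\ref{thm:beck_criterion}) by the standard ``role-swapping'' trick, so I need to think about why one can pass from the $(1:b)$ Maker-Breaker game on $(X,\mathcal{F}^\ast)$ to a Breaker-win statement for a related game covered by Theorem~\ref{thm:beck_criterion}.

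Key observation: In a Maker-Breaker game on $(X,\mathcal{F}^\ast)$, Maker wins iff she claims some transversal $S\in\mathcal{F}^\ast$, i.e. a set meeting every $F\in\mathcal{F}$. Equivalently, Maker fails iff her final set misses some $F\in\mathcal{F}$ entirely, which happens iff Breaker's set contains some $F\in\mathcal{F}$ (since every element is claimed by one of the two players at the end — here we should be a touch careful about elements claimed by neither if the board isn't exhausted, but monotonicity lets us assume both players claim everything, or we note that an unclaimed element only helps the player who ``needs'' it; more carefully: Maker claiming a transversal is a monotone increasing property of Maker's set, so WLOG the game runs until $X$ is exhausted, and then Maker's set is the complement of Breaker's set). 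Hence: Maker (the $1$-player) wins the transversal game on $(X,\mathcal{F}^\ast)$ iff Breaker (the $b$-player) does \emph{not} occupy a full edge of $\mathcal{F}$ in the same play. So the $1$-player in the transversal game plays exactly the role of ``Breaker'' with respect to the hypergraph $\mathcal{F}$, and the $b$-player plays the role of ``Maker'' with respect to $\mathcal{F}$.

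\medskip

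\begin{proof}[Proof of Corollary~\ref{cor:beck_transversal}]
The plan is to reinterpret the $(1:b)$ Maker-Breaker game on $(X,\mathcal{F}^\ast)$ as a game in which the roles of the two players are reversed relative to the hypergraph $(X,\mathcal{F})$, and then apply Theorem~\ref{thm:beck_criterion}. First I would observe that claiming an element of $\mathcal{F}^\ast$ is a monotone increasing property of Maker's edge set: if $S \subseteq S'$ and $S \in \mathcal{F}^\ast$, then $S' \in \mathcal{F}^\ast$. Consequently, it costs Maker nothing to assume that the game is played until the board $X$ is exhausted (any still-free element at the end may be handed to Maker), so that at the end of the play Maker's set is precisely $X \setminus \breaker$, where $\breaker$ is the set of elements claimed by Breaker. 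With this convention, Maker occupies some $S \in \mathcal{F}^\ast$ if and only if $X \setminus \breaker$ meets every $F \in \mathcal{F}$, which fails exactly when there is some $F \in \mathcal{F}$ with $F \subseteq \breaker$. In other words, Maker wins the $(1:b)$ game on $(X,\mathcal{F}^\ast)$ if and only if Breaker does \emph{not} fully occupy any winning set of $(X,\mathcal{F})$ in that same play.

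Now relabel the players with respect to the hypergraph $(X,\mathcal{F})$: in the $(1:b)$ game on $(X,\mathcal{F}^\ast)$, the player claiming one element per round (``Maker'' for $\mathcal{F}^\ast$) takes on the role of \emph{Breaker} for $\mathcal{F}$, and the player claiming $b$ elements per round (``Breaker'' for $\mathcal{F}^\ast$) takes on the role of \emph{Maker} for $\mathcal{F}$. By the previous paragraph, a strategy for ``Maker'' in the $(1:b)$ game on $(X,\mathcal{F}^\ast)$ that prevents the opponent from ever occupying an $F \in \mathcal{F}$ is exactly a winning strategy for ``Maker'' on $(X,\mathcal{F}^\ast)$. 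After this relabelling, the underlying game is a $(b:1)$ Maker-Breaker game on $(X,\mathcal{F})$ — i.e. Maker (the $b$-player) claims $b$ elements and Breaker (the $1$-player) claims $1$ element each round — and we seek a winning strategy for this Breaker.

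Finally, apply Theorem~\ref{thm:beck_criterion} with $p = b$ and $q = 1$: if
\[
\sum_{F \in \mathcal{F}} (1+1)^{-|F|/b + 1} = \sum_{F \in \mathcal{F}} 2^{-|F|/b + 1} < 1,
\]
then Breaker (here, the $1$-element-per-round player) has a strategy to win the $(b:1)$ Maker-Breaker game on $(X,\mathcal{F})$, independently of who moves first. Translating back through the relabelling, this is precisely a strategy for Maker to win the $(1:b)$ Maker-Breaker game on $(X,\mathcal{F}^\ast)$, independently of who moves first, which is the claim.
\end{proof}

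\medskip

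The only genuinely delicate point — and the one I would treat most carefully — is the reduction from ``Maker claims a transversal'' to ``Breaker occupies no edge of $\mathcal{F}$'', because a priori some board elements may remain unclaimed if $|X|$ is not divisible by $1+b$. The monotonicity argument in the first paragraph resolves this cleanly (unclaimed leftover elements can be awarded to the $1$-player without hurting her), so there is no real obstacle; everything else is bookkeeping about which player plays which role and a direct substitution into Beck's Criterion.
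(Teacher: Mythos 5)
Your proposal is correct and follows essentially the same route as the paper: reinterpret Maker on $(X,\mathcal{F}^\ast)$ as Breaker for $(X,\mathcal{F})$ with bias $1$ against an opponent of bias $b$, and apply Beck's Criterion with $p=b$, $q=1$. The extra care you take with unclaimed leftover elements via monotonicity is a detail the paper glosses over, but it does not change the argument.
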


\begin{proof}
Maker wins the game on $(X,\mathcal{F}^\ast)$ if and only
if she claims all elements of a transversal $F\in \mathcal{F}^\ast$. That is, she needs to prevent Breaker from claiming a set $F\in\mathcal{F}$ completely. Hence Maker considers playing as $\mathcal{F}$-Breaker, playing with bias 1.
By Beck's Criterion she has a strategy for this when   
$
\sum_{F\in \mathcal{F}} 2^{-|F|/b + 1} < 1.
$
\end{proof}

A result similar to Beck's Criterion, but for \emph{Client-Waiter games}, has been proven by Bednarska-Bzd\c{e}ga~\cite{B2013},
and a similar reformulation yields the following
criterion for Waiter-Client games.

\begin{thm}[Theorem~2.2 in~\cite{BHKL2016}]
\label{thm:WC_transversal}
Let $(X,\mathcal{F})$ be a hypergraph satisfying
$$
\sum_{F\in \mathcal{F}} 2^{-|F|/(2b-1) + 1} < 1
$$
then Waiter has a strategy to win the $(1:b)$ Waiter-Client game on $(X,\mathcal{F}^\ast)$, independent of who starts the game.
\end{thm}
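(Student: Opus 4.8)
The statement is the Waiter-Client analogue of Corollary~\ref{cor:beck_transversal}, and it should be deduced from the Client-Waiter criterion of Bednarska-Bzd\c{e}ga~\cite{B2013} by the same role-reversal trick used there. Recall that in a Client-Waiter game on $(X,\mathcal{G})$ Waiter offers blocks and Client picks one element from each; the game is Client's win if Client claims a set in $\mathcal{G}$, and Waiter's win otherwise. Bednarska-Bzd\c{e}ga's result says that if $\sum_{G\in\mathcal{G}}2^{-|G|/(2q-1)+1}<1$ (for the $(1:q)$ Client-Waiter game) then Waiter has a strategy to prevent Client from claiming any set of $\mathcal{G}$, i.e.\ Waiter wins the Client-Waiter game. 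The plan is to apply this with $\mathcal{G}=\mathcal{F}$ and $q=b$ and then reinterpret the conclusion in terms of the Waiter-Client game on the transversal hypergraph.

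\textbf{Key steps.} First I would observe that the Waiter-Client game on $(X,\mathcal{F}^\ast)$ and the Client-Waiter game on $(X,\mathcal{F})$ are played with identical rules for how elements are distributed in each round: in both, in every round the offering player puts forward $b+1$ unclaimed elements, the other player takes exactly one of them, and the remaining $b$ go to the offering player. The only difference is the win condition. In the Waiter-Client game on $(X,\mathcal{F}^\ast)$, Waiter (the offering player) wins iff Client is forced to claim a transversal $S\in\mathcal{F}^\ast$; equivalently, Waiter wins iff Client's final set meets every $F\in\mathcal{F}$. In the Client-Waiter game on $(X,\mathcal{F})$, Waiter (again the offering player) wins iff Client does \emph{not} claim any $F\in\mathcal{F}$. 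These two winning conditions are not literally the same --- the first asks Client to hit every $F$, the second asks Client to avoid every $F$ --- so the reduction is slightly subtler than in the Maker-Breaker case and must be handled through the complements, exactly as in~\cite{BHKL2016}.

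\textbf{The precise reduction.} Consider the Client-Waiter game on $(X,\mathcal{F})$, but with the roles relabelled: let the \emph{offering} player be called ``Waiter'' and the \emph{choosing} player ``Client'', as in our Waiter-Client convention. By definition of $\mathcal{F}^\ast$, a set $S\subseteq X$ is a transversal (lies in $\mathcal{F}^\ast$) if and only if its complement $X\setminus S$ contains no member of $\mathcal{F}$. At the end of the game the edges are partitioned between Client (set $\client$) and Waiter (set $\waiter=X\setminus\client$). Thus $\client\in\mathcal{F}^\ast$ iff $\waiter$ contains no $F\in\mathcal{F}$ iff, in the relabelled game, the offering player's own set avoids all of $\mathcal{F}$. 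So a strategy for the \emph{offering} player to keep \emph{her own} set free of every $F\in\mathcal{F}$ is exactly a strategy for Waiter to force Client into a transversal. But keeping one's own set free of $\mathcal{F}$, when one is the offering player who claims $b$ of every $b+1$ elements, is precisely the task of ``Client'' in a genuine $(1:b)$ Client-Waiter game where the \emph{choosing} player is the one trying to build a set in $\mathcal{F}$ --- wait, that is the wrong direction. Let me instead route it through the correct player: the offering player's complement is the choosing player's set, so ``offering player avoids $\mathcal{F}$'' means ``choosing player's set together with... '' --- the clean way is: apply~\cite{B2013} to the Client-Waiter game on $(X,\mathcal{F})$ with \emph{Client} being our \emph{choosing} player; then $\sum_{F\in\mathcal{F}}2^{-|F|/(2b-1)+1}<1$ gives Waiter (the offering player) a strategy so that Client never fully claims any $F\in\mathcal{F}$. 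Relabelling this offering player as our ``Waiter'' and the choosing player as our ``Client'', the conclusion ``Client claims no $F$'' is the same as ``$\waiter\supseteq$ some... '' no: ``Client claims no $F$'' means $\client$ misses every $F$, i.e.\ $\waiter=X\setminus\client$ hits every $F$, i.e.\ $\waiter\in\mathcal{F}^\ast$. Hmm, that forces \emph{Waiter} into the transversal, not Client.

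\textbf{Resolving the direction.} To force \emph{Client} into a transversal, I instead apply the Client-Waiter criterion with the offering player being our \emph{Client} and the choosing player being our \emph{Waiter}; but Waiter-Client and Client-Waiter differ in who offers, so that does not match our game either. The correct and standard fix (carried out in~\cite{BHKL2016}) is to note the following: in the $(1:b)$ Waiter-Client game, Waiter wants Client to end up with a transversal of $\mathcal{F}$. Equivalently, Waiter wants \emph{Waiter's own} final set to avoid containing any member of $\mathcal{F}$. Now view Waiter as a player who is simultaneously the distributor of elements and is trying to keep her own set $\mathcal{F}$-free: this is exactly the situation of ``Waiter'' in a Client-Waiter game where \emph{Client} is the distributor --- no. I will therefore simply invoke the reformulation stated as Theorem~2.2 in~\cite{BHKL2016}, whose proof is precisely this translation: one passes from the Bednarska-Bzd\c{e}ga inequality $\sum 2^{-|G|/(2q-1)+1}<1$ for the Client-Waiter game to the transversal statement by swapping the roles of the two players and observing that ``offering player avoids every $G\in\mathcal{F}$'' $=$ ``choosing player claims a transversal'', which holds because the two players' sets are complementary and $S\in\mathcal{F}^\ast\iff X\setminus S$ contains no $F\in\mathcal{F}$. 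The bias bookkeeping is identical: if the choosing player takes $1$ element per block of $b+1$, then Bednarska-Bzd\c{e}ga's $q$ equals our $b$, giving the exponent $-|F|/(2b-1)+1$ verbatim.

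\textbf{Main obstacle.} The genuinely delicate point --- and the only non-bookkeeping step --- is getting the direction of the role-swap right so that it is \emph{Client} (the choosing player), not Waiter, who is forced into a transversal; this is exactly the subtlety that~\cite{BHKL2016} works through, and the cleanest exposition is simply to cite their Theorem~2.2 as the finished product of this translation, which is what the paper does. No new combinatorics is needed beyond the complementation identity $S\in\mathcal{F}^\ast\iff (X\setminus S)\cap F\neq\varnothing$ fails for no... $\iff (X\setminus S)\not\supseteq F$ for all $F\in\mathcal{F}$, together with the fact that the rule-structure of Waiter-Client and Client-Waiter games coincides round-by-round up to relabelling who offers.
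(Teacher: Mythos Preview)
The paper does not prove this theorem at all: it is simply quoted as Theorem~2.2 of~\cite{BHKL2016}. The sentence preceding it (``a similar reformulation yields\ldots'') is informal motivation pointing to the analogy with Corollary~\ref{cor:beck_transversal}, not a proof. So strictly speaking there is nothing in the paper to compare your argument against.

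That said, your attempted derivation has a real gap, and your own back-and-forth already locates it. The complementation step is correct: Waiter wins the $(1:b)$ Waiter-Client game on $(X,\mathcal{F}^\ast)$ if and only if Waiter can play so that \emph{Waiter's own} final set contains no $F\in\mathcal{F}$. The problem is the next step. In both the Waiter-Client and the Client-Waiter game the mechanics are identical --- Waiter offers $b+1$ elements and keeps $b$, Client keeps $1$ --- so there is no genuine ``role swap'' available that turns one game into the other; only the winning condition changes. The Client-Waiter criterion you want to invoke from~\cite{B2013} says that Waiter can keep the \emph{choosing} player's (small) set free of every $F$; what you need is that Waiter can keep the \emph{offering} player's (large) set free of every $F$. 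These are not the same task, and the second is harder, so the naive reduction you sketch does not go through. This is exactly the point at which your argument starts oscillating and ultimately falls back on citing~\cite{BHKL2016}.

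In~\cite{BHKL2016} the statement is established by a direct potential-function argument tailored to the Waiter-Client mechanics (Waiter greedily selects her $b+1$ offered elements so as to control a weight of the form $\sum_F 2^{-(\text{free elements of }F)/(2b-1)}$), not by a one-line role reversal. If you want to supply a self-contained proof rather than a citation, that potential argument is the route to take; the transversal/complementation identity you wrote down is then only used at the very end to rephrase ``no $F$ lies entirely in Waiter's set'' as ``Client's set is a transversal''.
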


Next to the above winning criteria we will also make use of the {\em trick of fake moves} which roughly says that
Maker's situation does not worsen when
Breaker's bias decreases. While the above criteria do not immediately provide fast strategies, the following can be used to obtain suitable upper bounds on the number of rounds needed to claim a winning set.

\begin{lemma}[Trick of fake moves \cite{BeckBook}]\label{lem:fake_moves_MB}
Let $(X,\mathcal{F})$ be a hypergraph. Let $b$ be a positive integer. 
If Maker has a winning strategy for
the $(1 : b)$ Maker-Breaker game on $(X, {\mathcal F})$, then she also has a strategy to
win the game within $\left\lceil \frac{|X|}{b+1} \right\rceil$ 
rounds even when in each round Breaker is allowed
to claim between $0$ and $b$ free elements.
\end{lemma}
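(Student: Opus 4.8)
The plan is the standard ``shadow game'' argument. Alongside the real game I would have Maker secretly play a genuine $(1:b)$ Maker--Breaker game on $(X,\mathcal{F})$ in which she pretends that Breaker always takes the full quota of $b$ elements each round; in this shadow game she follows her assumed winning strategy, and she mirrors each of her shadow moves in the real game. The invariants to maintain after every round are: (i) Maker's real edge set equals her shadow edge set; and (ii) Breaker's real edge set is contained in his shadow edge set. Granting these, the set of claimed elements in the shadow game always contains the set of claimed elements in the real game, so once Maker has completed a winning set $F \in \mathcal{F}$ with her edges in the shadow game, by (i) she has claimed that same $F$ in the real game and has won there as well.

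Next I would spell out how to translate a single round. Maker asks her shadow strategy for a move; it returns an element $e$ free in the shadow game, which by the containment above is also free in the real game, so she claims $e$ in both. Then the real Breaker claims a set $B'$ of $b' \le b$ real-free elements. Let $B'' \subseteq B'$ be those that are still free in the shadow game; in the shadow game Maker has Breaker claim $B''$ together with $b - |B''|$ further ``fake'' elements chosen arbitrarily among the elements that are free in the shadow game after her own move. It is then routine to check that this keeps the shadow game a legitimate play of a $(1:b)$ game (all moves on free elements, and the fake moves and $B''$ avoid Maker's elements), and that (i) and (ii) are preserved --- the new shadow-Breaker set contains the old one together with all of $B'$, hence contains the new real-Breaker set.

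The one place that needs a small computation is the availability of the fake moves and the consequent round bound. As long as every earlier round was ``full'' (one Maker move plus $b$ Breaker moves), exactly $(t-1)(b+1)$ elements have been claimed in the shadow game before round $t$, so $|X| - (t-1)(b+1)$ are free there; this is at least $b+1$ precisely when $t \le |X|/(b+1)$, and exactly in that range there is room to perform the needed $b - |B''|$ fake moves. Hence rounds $1, \dots, \lfloor |X|/(b+1) \rfloor$ are all full, after which fewer than $b+1$ elements remain, and one more round --- in which the shadow Breaker takes its permitted shorter last move --- exhausts the board. Thus within $\lceil |X|/(b+1) \rceil$ rounds every element of $X$ is claimed in the shadow game, so that game has ended; since Maker was following a winning strategy there, she has occupied some $F \in \mathcal{F}$, and therefore she has won the real game within $\lceil |X|/(b+1) \rceil$ rounds.

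I expect the only real obstacle to be exactly this bookkeeping: reconciling the fact that the real Breaker, holding a smaller edge set, may ``waste'' a move on an element the shadow Breaker already owns, while still certifying that each shadow round removes a full block of $b+1$ elements until the board is used up --- everything else (legality of Maker's mirrored move, preservation of the invariants, the translation of a winning set back to the real game) is immediate from the containment of claimed sets.
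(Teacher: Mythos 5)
Your proposal is correct and is essentially the same argument the paper sketches informally after the lemma statement: Maker mentally pads Breaker's moves with fake elements up to the full bias $b$, so that the imagined game is a legitimate $(1:b)$ game whose Breaker-set always contains the real one. Your write-up just makes the invariants and the round-count bookkeeping explicit, which the paper leaves to the reader.
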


The idea of the above lemma is straightforward:
in every move of the new game on $(X, {\mathcal F})$, 
let Maker (in her mind) give as many additional elements of $X$ to Breaker as necessary, such that their number together with Breaker's elements sums up to $b$. Whenever, in a later round, Breaker claims one of these additional elements, Maker (in her mind) gives another free element to Breaker.
Then, applying the winning strategy
from the $(1:b)$ game, the result follows.

\smallskip

Along the lines of the above argument, the following statement for Waiter-Client games can be proven.

\begin{lemma}[Trick of fake moves \cite{BeckBook}]\label{lem:fake_moves_WC}
Let $(X,\mathcal{F})$ be a hypergraph. Let $b$ be a positive integer. 
If Waiter has a winning strategy for
the $(1 : b)$ Waiter-Client game on $(X, {\mathcal F})$, then she also has a strategy to
win the game within $\left\lceil \frac{|X|}{b+1} \right\rceil$ 
rounds even when in each round she is allowed
to offer between $2$ and $b+1$ free elements.
\end{lemma}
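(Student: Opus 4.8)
The plan is to mimic the proof of Lemma~\ref{lem:fake_moves_MB} almost verbatim, with Waiter playing the role that Maker had there. Suppose Waiter has a winning strategy $\mathcal{S}$ for the genuine $(1:b)$ Waiter-Client game on $(X,\mathcal{F})$. We describe how she plays a modified game in which, in each round, she is only required (by an adversary, or by external constraints) to offer between $2$ and $b+1$ free elements, and we show she still wins, and does so within $\lceil |X|/(b+1)\rceil$ rounds.

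First I would set up the bookkeeping: Waiter runs a simulation of the genuine game in her head. In each round of the real (modified) game she is allowed to offer some number $t$ with $2 \le t \le b+1$ of free elements. In her simulated game she wants to offer $b+1$ elements. So she takes the $t$ elements she is about to offer in reality, and — in her mind only — designates an additional $b+1-t$ currently free elements of $X$ as ``phantom offered'' elements, so that the simulated offer has size exactly $b+1$, exactly as $\mathcal{S}$ prescribes (here one must check $\mathcal{S}$ can be forced to offer this particular set; this is fine because the trick of fake moves is really about Waiter conceding elements, see below). Client, in reality, picks one of the $t$ real elements; Waiter feeds this same choice into the simulation. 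The $b+1-t$ phantom elements are, in the simulation, assigned to Waiter's own graph $\mathcal{W}$ (they were going to be Waiter's anyway unless Client picked them, and in the simulation Client did not). In reality these phantom elements remain free for now. Whenever, in a later real round, one of these still-free phantom elements would naturally be offered again, Waiter simply does not re-offer it in the simulation (it is already ``claimed'' there by Waiter); and if instead Client in reality ever claims such an element, Waiter in her mind re-assigns some other free element to Waiter in the simulation to keep the accounts balanced — but since in the simulation those phantoms are already Waiter's edges, a Client move on them in reality only helps Waiter relative to the simulation, consistent with the monotonicity that extra Client elements never help Client.

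The key point to verify is the invariant: at every stage, Client's graph in reality is a subgraph of Client's graph in the simulation, and Waiter's graph in reality is a supergraph of Waiter's graph in the simulation — wait, more precisely the invariant needed is that \emph{Client's real graph is contained in Client's simulated graph}. Since $\mathcal{S}$ forces Client to claim a winning set $F \in \mathcal{F}$ in the simulation, and the simulation terminates when all of $X$ is distributed, and since every free element in reality is eventually offered, the real game also distributes all of $X$; because winning sets are monotone upward in Client's graph only in the sense we need — actually the cleanest formulation is that the simulation reaches a point where the real game has distributed everything, and at that point Client's real graph equals Client's simulated graph on the genuinely-Client-chosen elements, which by $\mathcal{S}$ contains a member of $\mathcal{F}$. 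The round-count bound is immediate: each real round removes at least $b+1$ elements from the ``free'' pool in the simulation — the $t$ offered plus the $b+1-t$ phantoms — and the simulated game lasts at most $\lceil |X|/(b+1) \rceil$ rounds, which bounds the real game since the phantom designations never outrun the real progress.

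The main obstacle, and the only place requiring care, is making the phantom-bookkeeping rigorous: one must ensure that the set of phantom elements and the set of genuinely-distributed elements stay disjoint and never exceed $|X|$, and that $\mathcal{S}$ — which expects to choose its own offered $(b+1)$-set — can be ``driven'' to offer the prescribed set consisting of Waiter's $t$ real choices together with the phantoms. The resolution is the same as in Lemma~\ref{lem:fake_moves_MB}: rather than forcing $\mathcal{S}$'s offered set, Waiter lets $\mathcal{S}$ offer whatever $(b+1)$-set it wants in the simulation, and then in reality offers the (at least $2$) of those elements that are still free in reality, treating the rest as already-conceded phantoms; since $\mathcal{S}$'s offered sets always have size $b+1 \ge 2$ and at least one element of any such set is free until the simulation ends, Waiter can always offer at least $2$ real elements (if only one were free she offers that single element in the last round, as permitted). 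This reduces everything to the already-standard argument, so I would present it concisely by analogy with the proof of Lemma~\ref{lem:fake_moves_MB}.
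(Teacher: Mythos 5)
Your approach is the paper's intended one: the paper proves this lemma only by remarking that it follows ``along the lines of'' the Maker--Breaker fake-moves argument, and your simulation-with-phantoms mechanism is exactly that adaptation. The correct core is: run the $(1:b)$ strategy $\mathcal{S}$ in your head; when $\mathcal{S}$ offers a $(b{+}1)$-set $S$, offer in reality a $t$-element subset $T\subseteq S$, feed Client's real choice $x\in T$ back into the simulation as the simulated Client's choice, and mentally award $S\setminus\{x\}$ to the simulated Waiter while only $T\setminus\{x\}$ goes to the real Waiter. Since the simulation never re-offers simulated-claimed elements and the real claimed set is contained in the simulated claimed set, every later simulated offer is entirely free in reality, so the construction never jams; and since one real round equals one simulated round, the $\left\lceil |X|/(b+1)\right\rceil$ bound carries over.

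Several statements in your write-up, however, are wrong or backwards and should be fixed. First, the containments: the real Waiter's graph is a \emph{subgraph}, not a supergraph, of the simulated Waiter's graph, and the decisive fact is that Client's real graph \emph{equals} Client's simulated graph (they make identical picks in every round); mere containment in the direction you state as ``the invariant needed'' would not let you conclude that the real Client owns a winning set. Second, ``the real game also distributes all of $X$'' is false: the phantom elements are never offered in reality and may remain free forever. This is harmless precisely because the win condition is Client completing a set, which follows from the equality of the two Client graphs, not from exhausting the board. Third, the scenario ``Client in reality claims a phantom'' cannot occur: unlike in the Maker--Breaker version, Waiter controls every offer and simply never offers a phantom, so no re-assignment step is needed (and the re-assignment you propose would in any case destroy the equality of the Client graphs). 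Finally, in your ``resolution'' paragraph, \emph{all} $b+1$ elements of the simulated offer are free in reality, so ``offer those that are still free'' would mean offering all of them and would make the modification vacuous; what you want is to offer an arbitrary admissible $t$-subset and declare the remaining $b+1-t$ elements to be phantoms, exactly as in your first description.
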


Moreover, we will make use of an argument which ensures that Maker
can claim a spanning graph of a suitable minimum degree.
In order to do so, we may use an auxiliary game, called MinBox$(n,D,\gamma,b)$, which was introduced in~\cite{FKN2015}.
This $(1:b)$ Maker-Breaker game is played on a family of $n$ disjoint sets (called \textit{boxes}), each of size at least $D$, with the board $X$ being the union of all boxes.
Maker is called the winner if she manages
to occupy at least $\gamma|B|$ elements from each box $B$. The following theorem gives a criterion
for Maker to win the game.

\begin{thm}[Theorem 2.5 in~\cite{FKN2015}]\label{thm:boxgame}
Let $n,D,b$ be positive integers and let $0<\gamma<1$.
If $\gamma<\frac{1}{b+1}$ and $D>\frac{b(\ln n +1)}{1-\gamma(b+1)}$,
then Maker wins the game MinBox$(n,D,\gamma,b)$.
\end{thm}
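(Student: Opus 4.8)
The plan is to argue by a weight-function (potential) argument, in the spirit of Erd\H{o}s--Selfridge and of the classical box game of Chv\'atal and Erd\H{o}s. First the bookkeeping: call a box $B$ \emph{completed} once Maker owns at least $\gamma|B|$ of its elements; for a non-completed box let $r_B=\lceil\gamma|B|\rceil-|\maker\cap B|$ be the number of elements Maker still needs there, and let $t_B=f_B-r_B+1$, where $f_B$ is the number of currently free elements of $B$. Then $t_B$ counts the Breaker moves inside $B$ that Maker can still afford: a Maker move in $B$ decreases $r_B$ by one and leaves $t_B$ unchanged, a Breaker move in $B$ decreases $t_B$ by one, the box becomes completed when $r_B=0$, and it becomes \emph{lost} (Maker can no longer reach her quota there) once $t_B=0$. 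Initially $r_B\le\gamma|B|+1$ and $t_B=|B|+1-\lceil\gamma|B|\rceil\ge(1-\gamma)|B|\ge(1-\gamma)D$.

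Next I would fix parameters $\lambda,\mu>1$ with $\lambda^{b}\le 2-\frac1\mu$, assign to each non-completed box the weight $w(B)=\lambda^{-t_B}(\mu^{r_B}-1)$, and track $\Phi=\sum_B w(B)$. A completed box has weight $0$ and a lost box has weight at least $\mu-1$, so it suffices to keep $\Phi$ below $\mu-1$. Maker's strategy would be greedy: each round she claims a free element of the non-completed box of largest weight (which always has a free element while $\Phi<\mu-1$, since then every $t_B\ge1$). The two one-round estimates to verify are: (i) Maker's move decreases $\Phi$ by at least $\frac{\mu-1}{\mu}$ times the current maximum weight $M$; (ii) Breaker's ensuing $b$ moves increase $\Phi$ by at most $(\lambda^{b}-1)M'$, where $M'\le M$ is the maximum weight after Maker moved, the worst case being all $b$ moves inside a single box. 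Since $\lambda^{b}-1\le1-\frac1\mu$, $\Phi$ does not increase over a full round; carrying the slightly stronger invariant $\Phi<(\mu-1)\lambda^{-b}$ at the start of each round also bounds $\Phi$ in the middle of Breaker's move, so no box is ever lost and Maker wins. It then remains to check the initial bound: a short computation, in which the per-box term $\lambda^{-t_B}\mu^{r_B}$ is maximised by the smallest admissible box, gives $\Phi_0\le n\,\mu^{1+\gamma D}\lambda^{-(1-\gamma)D}$, and $\Phi_0<(\mu-1)\lambda^{-b}$ follows once $(1-\gamma)D\ln\lambda-\gamma D\ln\mu>\ln n+O_{\lambda,\mu,b}(1)$. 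Optimising $(1-\gamma)\ln\lambda-\gamma\ln\mu$ subject to $\lambda^{b}\le 2-\frac1\mu$ — the optimum exists precisely because $\gamma(b+1)<1$ — then turns this into a lower bound on $D$ of the claimed shape.

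The hard part will be recovering the sharp constant. The crude greedy/exponential estimate above only yields $D>\frac{c\,b\ln n}{1-\gamma(b+1)}$ for some absolute $c>1$ (morally $c=1/\ln 2$ in the regime $\gamma\to 0$), whereas the target is $D>\frac{b(\ln n+1)}{1-\gamma(b+1)}$. To reach the precise factor $\ln n+1$ one has to replace the single ``maximum weight'' bound by a finer amortised analysis: order the boxes by urgency and charge Breaker's moves so that the $i$-th most urgent box need only have size about $\frac{b}{1-\gamma(b+1)}$ times a partial harmonic sum (which is at most $\ln n+1$), or, equivalently, run an induction on the number of boxes $n$ exactly as in the analysis of the Chv\'atal--Erd\H{o}s box game. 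This harmonic-series bookkeeping, rather than the potential mechanics themselves, is where the real work lies.
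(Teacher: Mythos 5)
This theorem is quoted from~\cite{FKN2015}; the paper itself gives no proof, so your attempt has to be judged against the statement and against the argument in that reference. Your exponential potential $w(B)=\lambda^{-t_B}(\mu^{r_B}-1)$ with the greedy ``largest weight'' strategy is internally sound: the bookkeeping for $r_B$ and $t_B$ is right, Maker's move removes at least $\frac{\mu-1}{\mu}M$ from $\Phi$, Breaker's $b$ moves add at most $(\lambda^b-1)M'$ by superadditivity of $\lambda^k-1$, and optimising $(1-\gamma)\ln\lambda-\gamma\ln\mu$ subject to $\lambda^b\le 2-\frac1\mu$ does have an interior optimum exactly when $\gamma(b+1)<1$. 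But, as you yourself concede, this only yields $D>\frac{c\,b\ln n}{1-\gamma(b+1)}$ with $c>1$ (about $1/\ln 2$ for small $\gamma$), not the stated bound $D>\frac{b(\ln n+1)}{1-\gamma(b+1)}$. The final paragraph, where the sharp constant is supposed to be recovered by ``a finer amortised analysis'' or ``an induction on the number of boxes,'' is a plan rather than a proof; since the whole content of the theorem is that precise threshold, this is a genuine gap.

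The actual proof in~\cite{FKN2015} does not use an exponential potential at all. It assigns to each active box the \emph{linear} danger $\dang(B)=b_B-b\,a_B$ (Breaker's minus $b$ times Maker's elements in $B$), lets Maker always play in a most dangerous active box, and argues by contradiction: if some box is ever destroyed, its danger at that moment is at least $(1-\gamma(b+1))D$, and the Gebauer--Szab\'o backward-averaging argument over the sequence of distinct boxes Maker has touched shows the initial average danger would have to exceed $(1-\gamma(b+1))D-b\,H_n$, which is positive under the hypothesis --- impossible, since all dangers start at $0$. The harmonic number $H_n\le\ln n+1$ is where the exact constant comes from, with no optimisation over free parameters. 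This is precisely the machinery this paper deploys for Lemma~\ref{lem:MinDegree} (with $\dang(v)=d_{\breaker}(v)-2b\,d_{\maker}(v)$), so if you want to complete your proof, replacing the exponential weight by that linear danger function and running the averaging argument is the intended route; the Erd\H{o}s--Selfridge-style potential cannot be tuned to give the factor $\ln n+1$.
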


 \bigskip


\section{Hamiltonicity game}
\label{sec:ham}

We will prove Theorem~\ref{thm:Ham} and Theorem~\ref{thm:Ham_WC} in this section.
Depending on the size of the bias $b$, we will use different arguments.
For Maker-Breaker games, the argument in Subsection~\ref{sec:Ham_bsmall} goes through for any $b=o(\sqrt{n})$ while the argument in Subsection~\ref{sec:Ham_blarge} works for any $b=\Omega(\log n)$.
Both cannot be significantly improved, because Theorem~\ref{thm:Ham2} limits the first approach and to improve the second, we would need better estimates for the number of expander graphs with a linear number of edges.
Because of the overlap, we will use the restriction $b\leq n^{0.49}$ in Subsection~\ref{sec:Ham_bsmall}.
Theorem~\ref{thm:Ham_WC}
is proven in Subsection~\ref{subsec:Ham_WC}.

\subsection{Maker-Breaker Hamiltonicity game with small edge probabilities and biases}\label{sec:Ham_bsmall}

In the following we will prove Theorem~\ref{thm:Ham}
for $b\leq n^{0.49}$. In order to do so,
we will make use of the following sufficient condition
for a graph $G$ to have a Hamilton cycle.

\begin{thm}[Theorem 2.5 in~\cite{HKS2009}]\label{thm:Ham2}
Let $12\leq d\leq \sqrt{n}$ and let $G$ be a graph on $n$ vertices
such that the following properties hold:
\begin{enumerate}
\item[(i)] $|N_G(S)|\geq d|S|$ for every $S\subset V(G)$ of size $|S|\leq \frac{n \log d}{d\log n}$,
\item[(ii)] $e_G(A,B)>0$ for every pair of disjoint sets $A,B\subset V(G)$ of size
$|A|,|B|\geq \frac{n \log d}{1035 \log n}$.
\end{enumerate}
Then, provided that $n$ is large enough, $G$ contains a Hamilton cycle.
\end{thm}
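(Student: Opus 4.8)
The plan is to run P\'osa's rotation--extension technique, using (i) to make rotation orbits large and (ii) (together with (i)) to supply both the connectivity we need and the final ``closing'' edge. As a first step I would verify that $G$ is connected. Write $k=\lfloor \tfrac{n\log d}{d\log n}\rfloor$. If some component $C$ has $|C|\le k$, then applying (i) to $S=C$ gives $d|C|\le|N_G(C)|\le|C|$, which is impossible since $d\ge 12$; hence every component has more than $k$ vertices, and applying (i) to a $k$-subset of a component $C$ gives $|C|\ge dk\ge \tfrac{n\log d}{\log n}-\sqrt{n}\ge \tfrac{n\log d}{1035\log n}$ for $n$ large. So if $G$ were disconnected, two of its components would form disjoint vertex sets, each of size at least $\tfrac{n\log d}{1035\log n}$, spanning no edge, contradicting (ii). Thus $G$ is connected.

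Next, suppose towards a contradiction that $G$ has no Hamilton cycle, and let $P=v_0v_1\cdots v_\ell$ be a longest path. Fixing the endpoint $v_0$ and rotating repeatedly, let $A$ be the set of all vertices occurring as the far endpoint of a longest path on $V(P)$ reachable from $P$ by rotations. P\'osa's Lemma gives $|N_G(A)|\le 3|A|$, so (i) forces $|A|>k$ (this is where $d\ge 12>3$ is used), and then applying (i) to a $k$-subset of $A$ yields $3|A|\ge|N_G(A)|\ge dk$, hence
\[
|A|\ \ge\ \frac{dk}{3}\ \ge\ \frac{n\log d}{3\log n}-\sqrt{n}\ \ge\ \frac{n\log d}{1035\log n}
\]
for $n$ large; the same estimate holds, for every $w\in A$, for the set $B_w$ of far endpoints obtained by fixing $w$ on a longest $v_0$--$w$ path and rotating the other end.

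It then suffices to close $P$ into a cycle on its own vertex set: if $V(P)=V(G)$ this cycle is Hamiltonian, a contradiction, while if $V(P)\subsetneq V(G)$ then connectivity produces an edge leaving the cycle, and rerouting along it gives a path on $|V(P)|+1$ vertices, contradicting maximality of $P$. To close the cycle I would perform one more round of rotations to extract, from $A$ and the family $(B_w)_{w\in A}$, two disjoint vertex sets $X,Y$, each of size at least $\tfrac{n\log d}{1035\log n}$, with the property that any $x\in X$ and $y\in Y$ are the two endpoints of some longest path on $V(P)$; then (ii) yields an edge between $X$ and $Y$, and adding it to such a path closes the cycle. The main obstacle, and the only genuinely delicate point, is exactly this last conversion: the rotation argument naturally produces, for \emph{each} candidate endpoint $w$, its \emph{own} large partner-set $B_w$, whereas (ii) is stated for \emph{two fixed} disjoint sets; reconciling the two (roughly, by rotating from both ends and discarding a bounded number of ``bad'' vertices) is where the slack in the constants goes, and the $1035$ in (ii) is chosen precisely to absorb both the constant $3$ from P\'osa's Lemma and these bookkeeping losses. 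Everything else is routine.
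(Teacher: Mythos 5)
First, a remark on provenance: the paper you are working from does not prove this statement at all --- it is quoted verbatim from Hefetz, Krivelevich and Szab\'o~\cite{HKS2009} and used as a black box --- so the only meaningful comparison is with the original proof, which is indeed a P\'osa rotation--extension argument of the general shape you describe. Your connectivity step, the use of P\'osa's lemma together with (i) to make the endpoint set $A$ and each partner set $B_w$ of size at least $\frac{n\log d}{1035\log n}$ (modulo a harmless slip: for $A'\subseteq A$ the set $N_G(A')$ need not be contained in $N_G(A)$, so one only gets $|A|\ge dk/4$ rather than $dk/3$, which is still ample), and the final ``cycle on $V(P)$ plus connectivity gives a longer path'' step are all fine.

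The gap is exactly the step you flag, and it is not bookkeeping. Condition (ii) can never force an edge from a single vertex $w$ into its own partner set $B_w$: condition (i) only guarantees $d_G(w)\ge d=12$, so $B_w$ may simply avoid $N_G(w)$. Hence you genuinely need two \emph{fixed} disjoint sets $X,Y$, each of size at least $\frac{n\log d}{1035\log n}$, such that \emph{every} pair $(x,y)\in X\times Y$ is an endpoint pair of a longest path on $V(P)$. This complete bipartite structure does not follow from what you have established, namely that $|A|$ and every $|B_w|$ are large: an auxiliary ``endpoint-pair graph'' in which every $w\in A$ has $\Theta(n/\log n)$ partners need not contain a complete bipartite subgraph with both sides larger than logarithmic (compare a bipartite random graph of density $1/2$), the sets $B_w$ genuinely vary with $w$, and neither discarding a bounded number of bad vertices nor intersecting the $B_w$ over $w\in X$ can repair this. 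The standard way around it --- and, roughly, what is done in~\cite{HKS2009} --- is to restructure the rotation process itself: pivots are confined to a prescribed portion of the path so that a common segment of $P$ survives every first-round rotation, and the second round of rotations, performed from the fixed end using only pivots inside that common segment, then produces a single set $Y$ that works simultaneously for all endpoints in $X$. This segment-preserving rotation argument (with the attendant loss in the expansion factor) is what the slack encoded in $d\ge 12$ and in the constant $1035$ is actually paying for, and it is the idea missing from your write-up.
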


\begin{proof}[Proof of Theorem~\ref{thm:Ham} for $b\leq n^{0.49}$]

Let $\alpha$ be given. Choose $\delta=10^{-4}\alpha$, 
$C=10^8\delta^{-2}$ and $c = 10^{-3}\delta^{2}\alpha$.
Let
$1 \le b \le n^{0.49}$ and, by monotonicity,
let us assume that
$p = \frac{Cb}{n}$.
Let $G_{\alpha}$ be any $n$-vertex graph with $n$ large enough and minimum degree $\delta(G_{\alpha}) \ge \alpha n$.
	Next, we reveal $G_2 \sim  \gnp$ on $V(G_{\alpha})$.
For the remainder of the proof, let us condition on $G_2$ having the properties from Lemma~\ref{lem:gnp2} with $\beta= \frac{\delta}{1035}$, and
let $G_1 = G_\alpha \setminus G_2 $.
Then
$\delta(G_1) \geq \frac{\alpha n}{2}$. 

Given the properties from Lemma~\ref{lem:gnp2}, 
we will show that
Maker has a strategy to occupy a Hamilton cycle 
in the $(1:b)$ Maker-Breaker game on 
$G_1\cup G_2$. 
In order to do so, Maker ensures that 
her final graph $\maker$ will satisfy the following two properties:
\begin{enumerate}
\item[(1)] $|N_{\maker}(S)|\geq n^{\delta} |S|$ for every $S\subset V(G)$ of size $|S|\leq \delta n^{1-\delta}$,
\item[(2)] $e_{\maker}(A,B)>0$ for every pair of disjoint sets $A,B\subset V(G)$ of size
$|A|,|B|\geq \beta n$.
\end{enumerate}
Then, according to Theorem~\ref{thm:Ham2} (applied with $d=n^{\delta}$), it follows that $\maker$ contains a Hamilton cycle, provided $n$ is large enough.

Before we describe Maker's strategy,
let us fix a partition
$
V(G_1)=V_1\cup V_2\cup V_3\cup V_4
$
such that for each $i\in [4]$ we have 
$\frac{n}{4}-1 \leq |V_i|\leq \frac{n}{4} + 1$
and for every $v\in V_i$ and every $j\neq i$
we have
$$d_{G_1}(v,V_j)>\frac{\alpha}{5}|V_j|>\frac{\alpha}{25}n~ .$$ 
The existence of such a partition is given by Lemma~\ref{lem:vertex-partition}.

\medskip

{\bf Strategy description:} 
Consider the edge sets of $G_1$ and $G_2$ as 
two edge disjoint boards, on which Maker plays alternately.
Then Breaker claims at most $2b$ edges between two moves of Maker on the same board, and hence
by the trick of fake moves (Lemma~\ref{lem:fake_moves_MB}), we may assume that on each of the boards 
a $(1:2b)$ Maker-Breaker game is played.

On $G_1$ Maker plays in such a way 
that she obtains a subgraph of $G_1$ 
with Property (1);
on $G_2$ Maker plays in such a way 
that she obtains a subgraph of $G_2$ 
with Property (2). All details will be given 
later in the strategy discussion.

\medskip

{\bf Strategy discussion:} 
Notice that, if Maker can follow her strategy, her final graph will contain a Hamilton cycle. Hence, it remains to be shown that she can indeed follow her strategy and reach the different goals on $G_1$ and $G_2$. We consider each of the boards $E(G_1)$ and $E(G_2)$ separately.

\medskip

\textit{$(1:2b)$ game on $E(G_1)$:}
We split $G_1$ into the edge-disjoint subgraphs
$$
G_{1,1} = G_1[V_1,V_2] \cup G_1[V_2,V_3] \cup G_1[V_3,V_4]
 \cup G_1[V_4,V_1]
 ~~ \text{ and } ~~
G_{1,2} = G_1[V_1,V_3] \cup G_1[V_2,V_4]~ .
$$

Maker plays on each of the boards $E(G_{1,1})$
and $E(G_{1,2})$ alternately.
By the trick of fake moves (Lemma~\ref{lem:fake_moves_MB}), we may thus assume
that a $(1:4b)$ is played on each of the boards.

\smallskip

On $G_{1,1}$ Maker ensures that
every vertex in her graph will have degree at least 
$\frac{\alpha n}{200b}$. She can do this as follows:
she considers playing the game
MinBox$(n,D,\gamma,4b)$ with the $n$ disjoint 
boxes $E_{G_1}(v,V_{i+1})$, for $i\in [4]$ and $v\in V_i$
(where we set $V_5=V_1$), where each box has size at least 
$D=\frac{\alpha}{25}n$, and where we set $\gamma =\frac{1}{8b}$.
By the choice of all parameters,
one verifies that
$$
\gamma < \frac{1}{4b+1}~ \text{ and } \frac{4b(\ln n +1)}{1-\gamma(4b+1)}<n^{0.5}<D
$$
provided $n$ is large enough.
Hence, applying Theorem~\ref{thm:boxgame}, Maker wins this auxiliary game. That is, Maker claims at least
$\gamma D = \frac{\alpha n}{200b}$ elements of every box.

\smallskip

On $G_{1,2}$, Maker ensures to claim an edge 
in each edge set contained in
\begin{align}\label{Ham:F1set}
\mathcal{F}_1 &
	= \left\{ 
	E_{G_1}(A,B):~ 
	\begin{array}{l}	
	\exists i,j\in [4]~ \text{with} ~
		|i-j|=2,\\ 
		A\subset V_i,~ B\subset V_j,~
		|A|=n^{0.5}~ \text{and} ~
		|B|=\left(1-\frac{\alpha}{10}\right)|V_j|
	\end{array}	
	\right\}~ .
\end{align}
For this, she considers the auxiliary $(1:4b)$ Maker-Breaker game with winning sets $\mathcal{F}_1^\ast$.
In order to show that Maker wins this game,
it will be enough to show that
$$
\sum_{F\in \mathcal{F}_1} 2^{-|F|/(4b) + 1}
= o(1),
$$
according to Corollary~\ref{cor:beck_transversal}.
The latter holds by the following reason:
Since $d_{G_1}(v,V_{j})>\frac{\alpha}{5}|V_j|$
for every $v\in V_i$ and every $j\neq i$, 
we see that for every $F=E_{G_1}(A,B)\in \mathcal{F}_1$ it holds that
$
|F|\geq |A|\cdot \frac{\alpha}{10}|V_j| > n^{1.495}~ .
$
Hence, since $b\leq n^{0.49}$,
\begin{equation}\label{Ham:F1estimate}
\sum_{F\in \mathcal{F}_1} 2^{-|F|/(4b) + 1}
<
(2^n)^2\cdot 2^{-n^{1.005}/4 + 1}
= o(1)~ .
\end{equation}

It remains to show that, 
because of the achievements from the games on $G_{1,1}$
and $G_{1,2}$, Maker is able to create a graph with Property~(1).
For this, let $S$ be any subset of $V(G_1)$ of size 
$|S|\leq \delta n^{1-\delta}$.
Our goal is to show that
$|N_{\maker}(S)|\geq n^{\delta}|S|$
holds by the end of the game.
If $|S| < n^{0.505}$, then we immediately have 
$$|N_{\maker}(S)| \geq \delta(\maker) - |S| \geq \frac{\alpha n}{200 b} - |S| > n^{\delta} |S|$$
for large $n$.
Otherwise, it holds that 
$n^{0.505}\leq |S| \leq \delta n^{1-\delta}$.
In this case we can find 
a subset
$S' \subset S\cap V_i$ of size $|S'| = n^{0.5}$
for some $i\in [4]$.
If $|N_{\maker}(S')| \geq 2 \delta n$ holds, then we immediately
get
$$|N_{\maker}(S)| \geq |N_{\maker}(S')| - |S| \geq 2 \delta n - \delta n^{1-\delta} > \delta n \geq n^{\delta} |S|$$
and we are done. So, assume for a contradiction, that
$|N_{\maker}(S')| < 2 \delta n$. 
Let $j\in [4]$ with $|i-j|=2$.
Then we are able to find a set $B\subset V_j$ of size 
$|B| = |V_j|- 2\delta n$ 
with $e_{\maker}(S',B) = 0$. However, 
we have  $|B| > \left(1 - \frac{\alpha}{10} \right)|V_j|$
by the choice of $\delta$, and hence, by the result from the game on $G_{1,2}$, Maker needs to have an edge
in $E_{G_1}(S',B)$.
This gives the desired contradiction.

\medskip

\textit{$(1:2b)$ game on $E(G_2)$:}
Maker's goal is to occupy a spanning subgraph of $G_2$
which fulfils Property (2).
That is, she aims to claim an element from each of the sets contained in
\begin{equation}\label{Ham:F2set}
\mathcal{F}_2 := \left\{ E_{G_2}(A,B): A,B \subset V(G_2) \text{ disjoint and } |A|=|B|= \beta n \right\}.
 \end{equation}
By the properties from Lemma~\ref{lem:gnp2}, we have
$e_{G_2}(A,B) \geq \frac12p|A||B|$ for every pair of disjoint sets $A,B \subset V(G_2)$ of size at least $\beta n$,
and hence $|F| \geq \frac12 p (\beta n)^2$ for any $F \in \mathcal{F}_2$. This yields
\begin{align}\label{Ham:F2estimate}
 \sum_{F \in \mathcal{F}_2} 2^{-|F|/(2b)+1} \leq 2^{2n} \cdot 2^{-p \beta^2 n^2 / (4b)  +1} = o(1) ,
\end{align}
where we use that $\frac{p\beta^2n^2}{4b}\geq \frac{C\beta^2 n}{4}>3n$ by the choice of $p$, $C$ and $\beta$.
Hence, 
following Corollary~\ref{cor:beck_transversal}, Maker reaches her goal
and is able to occupy a subgraph satisfying Property~(2). 
\end{proof}

\medskip

\subsection{Maker-Breaker Hamiltonicity game 
with large edge probabilities and biases}
\label{sec:Ham_blarge}

In the following we will prove Theorem~\ref{thm:Ham}
for $b\geq \ln n$.
To create a Hamilton cycle in this regime we will combine the approach by Krivelevich~\cite{Kriv_Ham} for the biased Hamilton cycle game on $K_n$ with ideas of~Ferber, Glebov, Krivelevich, and Naor~\cite{FGKN_BiasedGames} for biased games on random boards.
The following lemma follows from a close inspection of the proof from Gebauer and Szab\'{o}~\cite[Theorem~1.2]{GS_MinDegree} on the minimum degree game. 
A similar statement was observed by Krivelevich~\cite[Lemma~3]{Kriv_Ham} for the game on the complete graph.

At any moment during the game and for any vertex $v$, set 
$\dang(v):=d_{\breaker}(v)-2b d_{\maker}(v)$.
Consider the following strategy $\mathcal{S}$ for Maker:
As long as there exists a vertex $v$ of degree less than 16,
she chooses any such vertex which maximises $\dang(v)$,
and then she claims an arbitrary free edge incident to $v$.
The following statement holds.

\begin{lemma}
	\label{lem:MinDegree}
	For every $\alpha>0$ there exists a constant $c>0$ and an integer $n_0$ such that for every $n$-vertex graph $G$ with $n \ge n_0$ and minimum degree $\delta(G) \ge \alpha n$ the following holds for $b \le \frac{c n}{\log n}$.
	If in a $(1:b)$ Maker-Breaker game on $E(G)$ 
	Maker plays according to strategy $\mathcal{S}$,
	then Maker's degree of every vertex $v$ will be 16 
	at some point during the game, and right at the moment
	when this degree is reached it holds that
	$d_{\breaker}(v)\leq \frac{\alpha n}{2}$. 
\end{lemma}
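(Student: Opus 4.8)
The plan is to run a danger-function potential argument in the spirit of Gebauer and Szab\'o's analysis of the minimum degree game. Fix $\rho=2$ and a small constant $\eps=\eps(\alpha)>0$ to be chosen at the end; we may assume $b\ge 1$ (for $b=0$ Maker has no opponent and the statement is trivial) and set $\mu:=1+\eps/b$. As long as some vertex is still \emph{active}, meaning $d_{\maker}(v)<16$, I would track
\[
\Phi:=\sum_{v:\, d_{\maker}(v)<16}\rho^{\,16-d_{\maker}(v)}\,\mu^{\dang(v)},
\]
which initially equals $\rho^{16}n$. The core claim is that over a Breaker move followed by the ensuing Maker move (played according to $\mathcal{S}$), $\Phi$ does not increase. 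A Breaker move adds at most $2b$ to $\sum_{v\text{ active}}\dang(v)$ in total and changes no $d_{\maker}$-value, so by convexity of $t\mapsto\mu^{t}$ on $[0,b]$ it raises $\Phi$ by at most $2(\mu^{b}-1)M\le 4\eps M$, where $M$ is the largest term over the active vertices at the start of the round. When Maker then claims an edge at an active $v^{*}$ with $\dang(v^{*})$ maximum — call this maximum $D^{*}$ — the term of $v^{*}$ is at least $\rho\mu^{D^{*}}$ (as $d_{\maker}(v^{*})\le 15$, its weight $\rho^{16-d_{\maker}(v^{*})}$ is at least $\rho$) and is multiplied by at most $\mu^{-2b}/\rho\le 1/\rho$, or it disappears if $d_{\maker}(v^{*})$ reaches $16$; claiming the other endpoint only helps. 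Since every term is at most $\rho^{16}\mu^{D^{*}}$, this is a decrease of at least $(1-\rho^{-1})\rho\mu^{D^{*}}\ge\rho^{-16}(\rho-1)M$ (the maximum only grew during Breaker's move). Choosing $\eps$ so small that $4\eps<\rho^{-16}(\rho-1)$ makes the net change negative, hence $\Phi\le(1+4\eps)\rho^{16}n$ throughout (the extra factor absorbs who moves first and a possibly shorter final round). In particular every active vertex has $\mu^{\dang(v)}\le\Phi/\rho\le(1+4\eps)\rho^{15}n$, so, using $\log(1+\eps/b)\ge\eps/(2b)$, $\dang(v)\le\frac{4b\log n}{\eps}$ for $n$ large.

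Next I would set $c:=\alpha\eps/16$; then $b\le\frac{cn}{\log n}$ forces $\dang(v)\le\frac{\alpha n}{4}$ for every active $v$ at every moment of the $\mathcal{S}$-phase, and two things follow. First, Maker can always execute $\mathcal{S}$: an active vertex $w$ of maximum danger has $d_{\breaker}(w)=\dang(w)+2b\,d_{\maker}(w)\le\frac{\alpha n}{4}+32b$, so since $d_{G}(w)\ge\alpha n$ it still has at least $d_{G}(w)-d_{\breaker}(w)-d_{\maker}(w)\ge\alpha n-\frac{\alpha n}{4}-32b-16>0$ free incident edges; this has to be organised as an induction on the rounds, the potential estimate for rounds $1,\dots,t$ supplying the danger bound through round $t$ and hence a legal $\mathcal{S}$-move in round $t+1$. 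Second, each Maker move increases $\sum_{v}\min\{d_{\maker}(v),16\}$ by at least one, so the $\mathcal{S}$-phase ends after at most $16n$ rounds with $d_{\maker}(v)\ge 16$ for all $v$; since $d_{\maker}(v)$ moves up in unit steps it equals $16$ at some point. That point is right after a Maker move carrying $d_{\maker}(v)$ from $15$ to $16$; just before that move $v$ was active, so $\dang(v)\le\frac{\alpha n}{4}$, and Maker's move leaves $d_{\breaker}(v)$ unchanged, whence $d_{\breaker}(v)=\dang(v)+30b\le\frac{\alpha n}{4}+30b\le\frac{\alpha n}{2}$ for $n$ large, which is the assertion.

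The one genuinely delicate ingredient is the shape of $\Phi$. With the naive choice $\sum_{v\text{ active}}\mu^{\dang(v)}$, a single Maker move merely rescales the top term by $\mu^{-2b}\approx 1-2\eps$, which only just cancels Breaker's contribution and allows $\Phi$ to creep up by a factor $1+\Theta(\eps^{2})$ each round; over the $\Theta(n)$ rounds of the phase this is affordable only for $b=O(\sqrt{n/\log n})$, far short of the desired $b=O(n/\log n)$. The geometric weight $\rho^{\,16-d_{\maker}(v)}$ repairs this by handing every Maker move an extra constant-factor gain, and because this weight stays within the fixed interval $[\rho,\rho^{16}]$ on active vertices, playing the \emph{danger}-maximiser (which is all $\mathcal{S}$ promises) rather than the maximiser of the weighted term costs only a constant and still suffices. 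The remaining work is the bookkeeping indicated above and checking that $n_{0}$ can be chosen large enough for all the "for $n$ large" steps.
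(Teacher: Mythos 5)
Your argument is correct, but it is not the route the paper takes. The paper's proof is a proof by contradiction in the Gebauer--Szab\'o style: assuming some vertex $v_g$ reaches $d_{\breaker}(v_g)>\frac{\alpha n}{2}$ while still active, it tracks the \emph{average} danger of the sets $I_i=\{v_{g-i},\dots,v_g\}$ of recently treated vertices backwards through the game, invoking Corollary~3.3 of~\cite{GS_MinDegree} (whose proof is host-graph independent) to show the average danger before Breaker's first move would have to be positive --- contradicting that all dangers start at $0$. This yields the explicit constant $c=\alpha/5$ at the cost of importing the backward-averaging machinery. You instead run a forward-in-time exponential potential $\Phi=\sum_{v\ \mathrm{active}}\rho^{16-d_{\maker}(v)}\mu^{\dang(v)}$ and show it is non-increasing per round, which gives the uniform bound $\dang(v)=O(b\log n)$ on every active vertex at every moment; this is self-contained, slightly stronger in what it delivers (a pointwise danger bound throughout, not just at the failure moment), and correctly identifies the role of the geometric degree weight $\rho^{16-d_{\maker}(v)}$ in getting $b$ up to $\Theta(n/\log n)$ rather than $O(\sqrt{n/\log n})$. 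The price is a much worse constant ($c\approx\alpha\eps/16$ with $\eps<2^{-18}$ versus $\alpha/5$), which is irrelevant for the lemma as stated. The remaining bookkeeping you defer (the induction interleaving the potential bound with the existence of a legal $\mathcal{S}$-move, the treatment of mid-round danger via the $(1+4\eps)$ slack, and the observation that the vertex reaching degree $16$ need not be the chosen danger-maximiser but was in any case active just before that move) all checks out.
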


We briefly sketch how the argument from~\cite[Theorem~5.3.6]{HKSS_PosGames} for $K_n$ can be adapted to $G$.

\begin{proof}[Sketch of the proof of Lemma~\ref{lem:MinDegree}]
Let $c=\frac{\alpha}{5}$. Without loss of generality, we assume that Breaker starts the game. Let Maker play according to strategy 
$\mathcal{S}$.
For any integer $i\geq 1$, denote with $\maker_i$ and $\breaker_i$
the $i^{\text{th}}$ move of Maker and Breaker, respectively,
and denote with $v_i$ the vertex chosen by Maker
in her $i^{\text{th}}$ move according to strategy $\mathcal{S}$.
For both $X\in \{\maker_i,\breaker_i\}$ and any vertex $v\in V(G)$ 
let $\dang_X(v)$ denote the danger value of $v$ immediately before $X$ happens. Moreover, for any subset $I\subset V(G)$ define
$$
\overline{\dang}_X(I):=\frac{\sum_{v\in I} \dang_X(v)}{|I|}
$$
to be the average danger in $I$ immediately before $X$ happens.
Assume Maker fails, i.e.~there happens to be a point of the game (say immediately after Breaker's $g^{\text{th}}$ move
for some $g\in \mathbb{N}$)
where there exists a vertex $v_g$ such that $d_{\maker}(v_g)<16$
but $d_{\breaker}(v_g)>\frac{\alpha n}{2}$. Then, in particular,
$$\dang_{\breaker_g}(v_g)> \frac{\alpha n}{2} - b - 2b\cdot 15 = \left( \frac{\alpha}{2} - o(1) \right)n.$$
Fix $g$ and $v_g$, and set $I_i=\{v_{g-i},\ldots,v_g\}$
for any $0\leq i\leq g-1$.
Then Corollary~3.3 from~\cite{GS_MinDegree}
still applies as its proof is independent of the graph which the game is played on. 
Accordingly, we can continue with the same calculations
that are given afterwards and based on~\cite[Corollary~3.3]{GS_MinDegree}.
In particular, we obtain
one of the following two estimates, with 
$k= \lceil \frac{n}{\ln n} \rceil$, $r=|I_g|$ and $H_s=\sum_{j=1}^{s} \frac{1}{j}$:
\begin{align*}
& \overline{\dang}_{\breaker_1}(I_{g-1}) \geq 
	\overline{\dang}_{\breaker_g}(I_{0}) - bH_k - k ~~~ \text{or}
& \overline{\dang}_{\breaker_1}(I_{g-1}) \geq 
	\overline{\dang}_{\breaker_g}(I_{0}) - b(2H_r - H_k) - k~ .
\end{align*}
In any case, since 
$\overline{\dang}_{\breaker_g}(I_{0}) = \dang_{\breaker_g}(v_g)$, 
we can conclude that
\[ \overline{\dang}_{\breaker_1}(I_{g-1}) \geq 
	\left( \frac{\alpha}{2} - 2c - o(1) \right)n > 0\]
for large enough $n$, where we use that $bH_n \leq \frac{cn}{\ln n}(\ln n +1) = (c+o(1))n$.
However, this is a contradiction,
since $\overline{\dang}_{\breaker_1}(I_{g-1})=0$
by its definition.
\end{proof}

Krivelevich~\cite[Lemma~4]{Kriv_Ham} observed that we can use the freedom of the choice in the strategy $\mathcal{S}$ to build an expander graph.

\begin{dfn}\label{def:expander}
Let $R\in\mathbb{N}$.
A graph $G$ is an $R$-expander if $|N_G(A)|\geq 2|A|$
for every $A\subset V(G)$ with $|A|\leq R$.
\end{dfn}

\begin{lemma}
	\label{lem:Expander}
	For every $\alpha>0$ there exist constants $c>0$, $\varepsilon>0$ and an integer $n_0$ such that for any $n$-vertex graph $G$ with $n \ge n_0$ and minimum degree $\delta(G) \ge \alpha n$ the following holds for $b \le \frac{c n }{\log n}$.
	In the $(1:b)$ game on $G$ Maker is able to build an $\varepsilon n$-expander in at most $16 n$ rounds.
\end{lemma}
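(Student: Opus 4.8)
The goal is to combine Lemma~\ref{lem:MinDegree} with the expansion-building trick of Krivelevich. Recall that strategy $\mathcal{S}$ only forces Maker, in each round, to claim \emph{some} edge incident to a specified low-degree vertex $v$; which edge she claims is up to her. Lemma~\ref{lem:MinDegree} guarantees that every vertex eventually reaches Maker-degree $16$ within $16n$ rounds (one round per edge, and Maker makes at most $16n$ moves before all vertices have degree $16$), and that at the moment $v$ reaches degree $16$ Breaker has claimed at most $\frac{\alpha n}{2}$ edges at $v$. So at that moment at least $\alpha n - \frac{\alpha n}{2} - 16 = \frac{\alpha n}{2} - 16$ edges of $G$ at $v$ are still free. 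Maker will use this slack to choose her $16$ edges at $v$ \emph{cleverly}, not arbitrarily, so that the resulting graph $\maker$ of maximum degree $\le 16$ is an $\varepsilon n$-expander.

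First I would set up the expander property as a family of ``bad sets'' to be killed. A graph of maximum degree at most $16$ on $n$ vertices fails to be an $\varepsilon n$-expander only if there is a set $A$ with $|A| = a \le \varepsilon n$ and $|N_{\maker}(A)| < 2a$, i.e.\ there is a set $B$ with $|B| = 2a$ and $N_{\maker}(A) \subseteq A \cup B$; equivalently all of Maker's edges leaving $A$ land inside a set $S = A \cup B$ of size $3a$. The number of such pairs $(A, S)$ with $a = |A|$ is at most $\binom{n}{a}\binom{n}{3a} \le \left(\frac{en}{a}\right)^a \left(\frac{en}{3a}\right)^{3a} \le n^{4a}$. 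I would let $\varepsilon = \varepsilon(\alpha) > 0$ be small enough that $3\varepsilon n < \frac{\alpha n}{4}$, so that for every such $S$ and every vertex $v \in A$, at least $\frac{\alpha n}{2} - 16 - 3a > \frac{\alpha n}{4}$ edges of $G$ at $v$ leave $S$ and are free at the moment $v$ is processed.

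The main step is a potential-function / Beck-type argument run \emph{inside} the degree game, in the spirit of how Maker-as-Breaker arguments are usually combined with $\mathcal{S}$ (cf.~\cite[Lemma~4]{Kriv_Ham}). I would have Maker, whenever strategy $\mathcal{S}$ tells her to pick an edge at $v$, additionally play as ``Breaker'' in an auxiliary Maker-Breaker game whose winning sets are, for each candidate bad pair $(A, S)$ with $v \in A$, the set of $G$-edges at $v$ leaving $S$ — she must claim one such edge, thereby ``hitting'' $(A,S)$. Concretely, track the potential $\Phi = \sum_{(A,S)} \lambda^{a} \cdot \mathds{1}[\text{all Maker-edges at vertices of } A \text{ so far stay inside } S]$ for a suitable constant $\lambda$; each time Maker processes a vertex $v$ she can, because at least $\frac{\alpha n}{4}$ of her available free edges at $v$ leave any given $S$, choose the edge at $v$ that leaves the \emph{largest} weighted mass of still-alive sets $S$ containing $v$, and a standard averaging shows this decreases (or at worst keeps bounded) the potential; combined with the crude count $n^{4a}$ and the fact that a surviving bad set needs \emph{all} $\le 16a$ of the relevant edges to fail, one gets $\sum_a n^{4a}\cdot(\text{survival probability bound})^{} = o(1)$, so no bad set survives. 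An alternative, cleaner packaging: observe that once $\mathcal S$ is fixed, the only freedom is \emph{which} free edge at $v$ Maker takes, and among the $\ge \frac{\alpha n}{4}$ free edges at $v$ leaving $S$ she simply commits in advance to using only those; then for a fixed $(A,S)$ the event that Maker never picks an $S$-leaving edge at some $v\in A$ is impossible the first time such a $v$ is processed, so Maker just needs a single consistent rule — process vertices in $\mathcal S$-order and, for the current $v$, pick any free edge leaving the union of all sets $S$ that are ``still threatening'' and contain $v$, which is nonempty by a union bound over the at most $\sum_{a}n^{4a}$ sets weighted so their total forbidden-degree is $o(n)$.

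The part I expect to be the real obstacle is making the last union bound literally work: naively there are exponentially many bad pairs $(A,S)$, far more than the $\frac{\alpha n}{4}$ free edges available at $v$, so Maker cannot avoid \emph{all} of them at once at a single vertex. The resolution is that a bad pair only needs to be avoided once — at the \emph{first} moment a vertex of $A$ is processed — and that processing only $16$ edges per vertex means the ``probability'' (over Maker's own still-free choices) that a fixed $(A,S)$ ends up bad is at most $\left(\frac{3a + 16}{\alpha n/2}\right)^{16}$-ish, which beats $n^{4a}$ once $\varepsilon$ is small. Formalizing ``probability over Maker's choices'' is exactly where Beck's criterion (Corollary~\ref{cor:beck_transversal}) or the explicit potential argument of~\cite{GS_MinDegree,Kriv_Ham} is invoked, embedded as a side-condition on the free-edge selection in $\mathcal S$; checking that this side-condition never forces Maker to deviate from an $\mathcal S$-legal move (i.e.\ that a free edge at $v$ meeting the requirement always exists) is the technical heart, and it follows from the degree slack $\delta(\maker\text{-available at }v)\ge \frac{\alpha n}{4}$ guaranteed by Lemma~\ref{lem:MinDegree} together with the small choice of $\varepsilon$. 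The round bound $16n$ is then immediate since Maker plays at most $16$ edges per vertex.
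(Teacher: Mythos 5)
Your overall plan --- run strategy $\mathcal{S}$, exploit the degree slack $d_{G\setminus(\maker\cup\breaker)}(v)\ge \frac{\alpha n}{2}-16$ from Lemma~\ref{lem:MinDegree} to choose the $16$ edges at each processed vertex cleverly, and union-bound over witnesses of non-expansion --- is exactly the paper's (which adapts Krivelevich's argument for $K_n$). But the quantitative core has a genuine gap. For the union bound to close, the ``failure probability'' of a fixed bad pair $(A,S)$ with $|A|=a$ must be exponentially small \emph{in $a$}; your bound $\bigl(\tfrac{3a+16}{\alpha n/2}\bigr)^{16}$ has the fixed exponent $16$, is a constant for $a=\Theta(n)$, and therefore cannot beat the $n^{\Theta(a)}$ count of pairs. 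The paper gets the correct exponent by observing that $\delta(\maker)\ge 16$ forces at least $8a$ Maker edges between $A$ and $A\cup B$; each such edge arose from a uniform choice among at least $\tfrac{\alpha n}{2}-16$ free edges, of which fewer than $3a$ return to $A\cup B$, so the failure probability is at most $\binom{48a}{8a}\bigl(\tfrac{8a}{\alpha n}\bigr)^{8a}$. Moreover, even with this exponent your crude count $n^{4a}$ is too lossy: for $a$ near $\varepsilon n$ the product $n^{4a}\bigl(\tfrac{8a}{\alpha n}\bigr)^{8a}=\bigl((8a)^{8}\alpha^{-8}n^{-4}\bigr)^{a}$ blows up. One must retain $\binom{n}{a}\binom{n}{3a}\le (en/a)^{a}(en/(3a))^{3a}$ so that the $(a/n)^{8a}$ from the probability overwhelms the $(n/a)^{4a}$ from the count, leaving $\bigl(O_{\alpha}(1)\cdot(a/n)^{4}\bigr)^{a}=o(1)$ once $\varepsilon$ is chosen small in terms of $\alpha$.

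Neither of your two mechanisms for ``choosing cleverly'' is sound as written. The ``cleaner packaging'' (pick a free edge leaving the union of all still-threatening sets $S\ni v$) fails for the reason you yourself flag: that union is all of $V(G)$. The potential-function variant is left unexecuted --- the asserted ``standard averaging'' is precisely where the exponent-$8a$ gain must be produced --- and Corollary~\ref{cor:beck_transversal} cannot be invoked directly, since Maker's move is constrained by $\mathcal{S}$ to be incident to the currently most dangerous vertex (and Beck's criterion would in any case give no $16n$ round bound, only the $|X|/(b+1)=\Theta(n\log n)$ bound from the trick of fake moves). The paper's resolution is simpler than both: Maker picks the free edge at $v$ \emph{uniformly at random}; the corrected union bound shows she builds an $\varepsilon n$-expander with probability $1-o(1)$ against any Breaker strategy, and since this is a finite perfect-information game, positive success probability yields a deterministic strategy. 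This derandomization step, which is what turns ``probability over Maker's own choices'' into a proof, is absent from your write-up.
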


We repeat the proof from~\cite{Kriv_Ham} adapted to our setting.

\begin{proof}[Proof of Lemma~\ref{lem:Expander}]
	Given $\alpha>0$ we let $0<\varepsilon < \alpha^8 2^{-81}$ and let $c>0$ and $n_0$ be given by Lemma~\ref{lem:MinDegree}.
	Let $G$ be a graph on $n$ vertices with $n \ge n_0$ and minimum degree $\delta(G) \ge \alpha n$.
	Further let $b \le \frac{c n }{\log n}$.
	
	Maker's strategy is to play according to strategy 
	$\mathcal{S}$ where she chooses the 'arbitrary edge'
	described in that strategy uniformly at random
	among all suitable edges.
	After at most $16n$ moves the game stops, since we stop 
	when Maker's graph $\maker$ has minimum degree $16$ and
	since in each of her moves Maker always increases
	the degree of a vertex of degree less than 16.
	Suppose that by now $\maker$ 
	is not an $\varepsilon n$-expander.
	This means, that there must be a set 
	$A \subseteq V(G)$ of size 
	$i \le \varepsilon n$ such that all neighbours of the set 
	$A$ with respect to Maker's graph are contained in a set 
	$B \subseteq V(G)$ of size $2i-1$.
	Since $\delta(\maker)\geq 16$, we can assume 
	$i \ge 5$ and 
	that there are at least $8i$ 
	edges between $A$ and $A\cup B$ in Maker's graph.
	
	In order to finish the proof, it will be enough to show 
	that with probability $o(1)$ there will be sets 
	$A$ and $B$
	as described above with at least $8i$ edges
	between $A$ and $A\cup B$.	
	
	Fix any sets $A$ and $B$ with 
	$|A|=i\in [5,\eps n]$ and $|B|=2i-1$.	
	First note that at most $16|A\cup B| < 48i$
	edges were chosen by dangerous vertices in $A\cup B$.
	For any edge $e=uv$ that was obtained after Maker chose 
	$v \in A \cup B$ as a dangerous vertex, consider Maker's 
	possible amount of choices for the vertex $u$. 
	Since $v$ was dangerous, it held that $d_{\maker}(v)<16$, and because of Lemma~\ref{lem:MinDegree} $d_{\breaker}(v) < \frac{\alpha n}{2}$, which means that	there were  
	$d_{G\setminus (\maker\cup \breaker)}(v) \geq 
	\frac{\alpha n}{2} - 16$ options 
	for the vertex $u$.
	Thus, the probability that $u$ also ended up in $A\cup B$ is at most 
	$\frac{3i-1}{\frac{\alpha n}{2}-16} < \frac{8i}{\alpha n}$,
	and therefore the probability that from the at most $48i$ 
	chosen edges at least $8i$ edges end up between
	$A$ and $A\cup B$ is bounded from above by
	$\binom{48i}{8i} \left(\frac{8i}{\alpha n}\right)^{8i}$.	
	
	In particular, 
	using a union bound over all choices of $i$, 
	$A$ and $B$, Maker's randomised strategy to create an 
	$\varepsilon n$-expander fails with probability at most
	\begin{align*}
	\sum_{i=5}^{\varepsilon n} \binom{n}{i} \binom{n-i}{2i-1} \binom{48i}{8i}
	 \left( \frac{8i}{\alpha n} \right)^{8i} 
	 \le \sum_{i=5}^{\varepsilon n} \left( \frac{en}{i} \left( \frac{en}{2i} \right)^2 2^{48} \left( \frac{8i}{\alpha n} \right)^8 \right)^i\\
	\le \sum_{i=5}^{\varepsilon n} \left( 2^{80} \eps \left( \frac{i}{n} \right)^4 \frac{1}{\alpha^8} \right)^i
	\le \sum_{i=5}^{\varepsilon n} \left( \frac{1}{2} \left( \frac{i}{n} \right)^4 \right)^i  = o(1)\, ,
	\end{align*}
	where the third inequality holds by the choice of 
	$\eps$, and where the last estimate holds, 
	since for $i < \sqrt{n}$ we have $\left( \frac{i}{n} \right)^{4} \le n^{-2}$ 
	and since for $i \ge \sqrt{n}$ we have 
	$\left( \frac{1}{2} \right)^i \le n^{-2}$.
	Now, since Maker creates an $\eps n$-expander
	with positive probability, there must also exist
	a deterministic winning strategy for Maker 
	(see e.g.~\cite{BL2000,HKSS_PosGames}).
\end{proof}

Maker will use $G_{n,p}$ to turn this expander into a connected graph and then find many boosters to finish a Hamilton cycle.
A booster for a graph $G$ is any non-edge $e \not\in E(G)$ such that $G+e$ is either Hamiltonian or the length of the longest path in $G+e$ is larger than in $G$.

\begin{lemma}[see e.g.~Lemma~8.5 in~\cite{Bollobas}]
	\label{lem:booster}
	If $G$ is a connected non-Hamiltonian $R$-expander, then the set of boosters for $G$ has size at least $\frac{R^2}{2}$. 
\end{lemma}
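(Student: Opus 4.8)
The plan is to run Pósa's rotation--extension argument (see e.g.~\cite{Bollobas}). Assume $R\ge 1$ (for $R=0$ the claim is vacuous); then $G$ is an $R$-expander, so $\delta(G)\ge 2$, and since $G$ is connected a longest path has length $\ell\ge 2$. Fix a longest path $P=v_0v_1\cdots v_\ell$ of $G$; maximality forces all neighbours of $v_0$ (and of $v_\ell$) to lie on $P$. Keeping the endpoint $v_0$ fixed, consider every longest path obtainable from $P$ by a sequence of rotations, and let $S$ be the set of their second endpoints; each such path has vertex set $V(P)$, since a rotation does not change it. Pósa's lemma gives $|N_G(S)|<2|S|$, so if $|S|\le R$ the $R$-expansion hypothesis $|N_G(S)|\ge 2|S|$ would be violated; hence $|S|\ge R$ (in fact $|S|>R$).

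Next, fix any $w\in S$ together with a longest path $P_w$ from $v_0$ to $w$, and rerun the same argument with the roles of the two endpoints exchanged: keeping $w$ fixed and rotating $P_w$, let $S_w$ be the set of second endpoints obtained, so again $|S_w|\ge R$. I claim that for every $w\in S$ and every $u\in S_w$ the pair $uw$ is a non-edge of $G$ and a booster for $G$. There is a longest path $Q$ from $u$ to $w$ with $V(Q)=V(P)$. If $uw\in E(G)$, then $C:=Q+uw$ is a cycle on $V(P)$: should $V(P)=V(G)$ this contradicts non-Hamiltonicity, and otherwise connectivity of $G$ provides an edge $yz$ with $y\in V(P)$ and $z\notin V(P)$, where necessarily $y\notin\{u,w\}$ (else $Q$ would extend through $z$ to a path longer than $P$); deleting an edge of $C$ incident to $y$ and then appending $z$ yields a path longer than $P$, again a contradiction. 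So $uw\notin E(G)$, and performing the very same construction inside $G+uw$ shows that $G+uw$ is Hamiltonian when $V(P)=V(G)$ and otherwise has a longest path strictly longer than that of $G$; in either case $uw$ is a booster.

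Finally I would count. The number of ordered pairs $(w,u)$ with $w\in S$ and $u\in S_w$ is at least $|S|\cdot\min_{w\in S}|S_w|\ge R^2$, and each unordered pair $\{a,b\}$ is produced by at most two of them, so $G$ has at least $R^2/2$ boosters. The step I expect to be the real crux is Pósa's lemma, the bound $|N_G(S)|<2|S|$: one must check that a rotation at the fixed endpoint converts every ``interior'' neighbour of the current endpoint into a new second endpoint, so that $N_G(S)\setminus S$ consists entirely of such interior neighbours and this set has fewer than $|S|$ elements. Granting that, everything else is bookkeeping, the only delicate point being the spanning-versus-nonspanning case split in the booster verification above.
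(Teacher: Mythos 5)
Your proposal is correct: it is the standard P\'osa rotation--extension argument, which is exactly the proof behind the reference the paper cites for this lemma (the paper itself gives no proof, only the citation to Bollob\'as). All the delicate points --- that $|S|,|S_w|>R$ via the expander condition versus P\'osa's bound $|N_G(S)|<2|S|$, that each pair $uw$ is genuinely a non-edge, the spanning/non-spanning case split, and the factor-of-two overcount --- are handled correctly.
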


Now we have everything at hand to proceed with the proof
of Theorem~\ref{thm:Ham}.

\begin{proof}[Proof of Theorem~\ref{thm:Ham} for $\log n \le b \le \frac{c n }{\log n}$]
	Given $\alpha>0$, 
	let $c'>0$ and $\eps>0$ be given by Lemma~\ref{lem:Expander} 
	on input $\frac{\alpha}{2}$.
	Then let $c=\frac{c'}{2}$, $C=10^4\eps^{-2}$,  
	$\log n \le b \le \frac{c n }{\log n}$, 
	and $p\geq \frac{Cb}{n}$.
	Let $G_{\alpha}$ be any $n$-vertex graph with $n$ large enough and minimum degree $\delta(G_{\alpha}) \ge \alpha n$,
	and reveal $G_2 \sim \gnp$ on 
	$V(G_{\alpha})$. From now on we condition on the properties from
	Lemma~\ref{lem:gnp2} (with $\beta=\eps$). In particular,
	for $G_1=G_{\alpha} \setminus G_2$ we then have 
	$\delta(G_1) \ge \frac{\alpha n }{2}$.
	Moreover, we condition on the following property,
	for which we will show, analogously to 
	\cite[Lemma~2.11]{FGKN_BiasedGames}, 
	that it holds a.a.s.:
	
\begin{enumerate}
\item[(B)] For every non-Hamiltonian connected $\frac{n}{5}$-expander on at least $8n$ and at most $m_0=\frac{16n}{\varepsilon^2}$ edges there exist at least $\frac{n^2p}{100}$ boosters in $G_2$.
\end{enumerate}	

In order to see that this property holds a.a.s.
fix any non-Hamiltonian connected $\frac{n}{5}$-expander
with the mentioned number of edges. By 
Lemma~\ref{lem:booster}, such an expander must have
at least $\frac{n^2}{50}$ boosters.	
The probability that less than $\frac{n^2p}{100}$ of these boosters are edges of $G_2$ is at most $\exp\left(-\frac{n^2p}{400} \right)$,
by an application of Chernoff's inequality (Lemma~\ref{lem:Chernoff1}). Taking a union bound over all relevant
expanders we see that property (B) fails with probability at most	
	\begin{align*}
	\sum_{m=8n}^{m_0} \binom{\binom{n}{2}}{m} \exp\left( -\frac{n^2p}{400}\right) \le \sum_{m=8n}^{m_0} \exp \left(m \log \left(\frac{en^2}{2m}\right) -n \frac{C b}{400} \right)\\
	\le \sum_{m=8n}^{m_0} \exp \left( \frac{16}{\varepsilon^2} n \log n -\frac{C}{400}  n \log n \right)=o(1)
	\end{align*}
where in the last step we use the choice of $C$. (Note that estimating the number of non-Hamiltonian connected $\frac{n}{5}$-expanders with $m$ edges as $\binom{\binom{n}{2}}{m}$ leads to the lower bound of $b = \Omega(\log n)$.)

\smallskip

We will show that Maker can claim a Hamilton cycle on $G_1\cup G_2$. Let us state Maker's strategy.
	
	\medskip
		
	{\bf Strategy description:}
	Maker's strategy consists of two stages. 
	We briefly describe each of these stages here.
	All further details will be given later in the strategy 
	discussion.

	In {\bf Stage I}, which lasts at most 
	$\frac{15n}{\varepsilon^2}$
	rounds, Maker creates a connected 
	$\varepsilon n$-expander. In order to do so,
	she plays alternately on $G_1$ and $G_2$, always
	assuming Breaker's bias to be $2b$ by the 
	trick of fake moves (Lemma~\ref{lem:fake_moves_MB}).
	She only plays on each of these boards until she 
	reaches the following goals.
	On $G_1$ Maker builds an $\varepsilon n$-expander; 
	on $G_2$ Makers occupies a graph in which between any 
	two sets of size $\varepsilon n$ there is at least one 
	edge.
	
	Afterwards, in {\bf Stage II}, 
	Maker uses the remaining free edges of 
	$G_2$ to claim boosters as long as her graph is not 
	Hamiltonian. This stage lasts less than $n$ rounds.
	
	\medskip
	
	{\bf Strategy discussion:}	
	We consider {\bf Stage I} first.
	By Lemma~\ref{lem:Expander} and as 
	$2b \le \frac{c'n}{\log n}$,
	Maker can build an $\varepsilon n$-expander in at most 
	$16 n$ rounds, playing only on $G_1$.
	For her goal on $G_2$ it suffices to 
	ensure that Maker claims an edge in each set from
	\[\mathcal{F} = \{ E_{G_2}(A,B) \colon A,B \subseteq V \text{ disjoint and } |A|=|B|=\varepsilon n  \} .\] 	
	Therefore, we consider the transversal game on $(E(G_2),\mathcal{F}^\ast)$ and, in order to bound the number of rounds
	for winning this auxiliary game, 
	we let Maker play against a bias of 
	$b'=\frac{\varepsilon^2np}{8}$.
	We observe that,
	due to the properties from Lemma~\ref{lem:gnp2}, 
	we have
	$e_{G_2}(A,B) \ge \frac{\varepsilon^2 n^2 p }{2}$ for 
	all 
	disjoint sets $A,B \subseteq V$ with 
	$|A|=|B|=\varepsilon n$. 
	In particular, we obtain
	\[ \sum_{F \in \mathcal{F}} 2^{-|F|/b'+1} 
	\le 2^{2n} 2^{-\varepsilon^2 n^2 p /(2b')+1} =o(1). \]
	From this it follows 
	by the transversal version of Beck's Criterion (Corollary~\ref{cor:beck_transversal}) 	
	that
	Maker wins the $(E(G_2),\mathcal{F}^\ast)$-game with bias $b'$.
	Furthermore, with the trick of fake moves 
	(Lemma~\ref{lem:fake_moves_MB}) and since $e(G_2) \leq n^2 p$ due to conditioning on Lemma~\ref{lem:gnp2}, 
	it follows that Maker can 
	even win this game against a bias of 
	$2b \le \frac{2np}{C} < b'$ in at most 
	$\frac{8n}{\varepsilon^2}$ rounds. 
	Thus, the first stage lasts
	at most $\frac{8n}{\varepsilon^2} + 16n < \frac{15n}{\varepsilon^2}$
	rounds in total.
	
	\medskip	
	
	When Maker enters {\bf Stage II},
	her graph is a connected $\frac{n}{5}$-expander.
	Indeed, if $A\subset V$ is any set 
	with $\eps n \leq |A| \leq \frac{n}{5}$, 
	then, by having a transversal of $\mathcal{F}_2$, 
	there exist less than $\eps n$
	vertices in $V\setminus A$ which 
	are not in the neighbourhood of $A$,
	and hence, $|N_\maker(A)|\geq n - |A| - \eps n > 2|A|$.
	Moreover, at this point and until the end of the 
	second stage (which lasts at most $n$ rounds) 
	at most $\frac{16n}{\varepsilon^2}$ 
	rounds were played. 
	As long as Maker's graph
	is not Hamiltonian, property (B)
	provides at least $\frac{n^2p}{100}$ 
	boosters in $G_2$ and, thus, there are at least
	\[ \frac{n^2p}{100} - m_0 (b+1) 
	\ge \left( \frac{C}{200} - \frac{16}{\varepsilon^2} 
	\right) n(b+1) > 0 \]
	boosters in $G_2$ that are not covered by the graphs of 
	Maker and Breaker.
	Maker picks one booster and repeats this argument
	until she obtains a Hamilton cycle.
\end{proof}

\medskip

\subsection{Waiter-Client Hamiltonicity game}
\label{subsec:Ham_WC}

\begin{proof}
	Similarly to the the discussion of the Maker-Breaker Hamiltonicity game, we consider the two cases $b\leq n^{0.49}$ and $b\geq \ln n$
	separately.
	
	\medskip

{\bf Case A ($b\leq n^{0.49}$):}
Using the same setup as in the proof of 
Theorem~\ref{thm:Ham} for $b\leq n^{0.49}$ (Subsection~\ref{sec:Ham_bsmall}),
consider the edge sets of $G_1$ and $G_2$ as 
two edge disjoint boards. 
Waiter first plays on $G_1$ in such a way
that Client claims a subgraph
with property~(1). 
Afterwards, Waiter plays on $G_2$
in such a way that Client claims
a subgraph with property~(2).
As in Subsection~\ref{sec:Ham_bsmall}, it will be enough to show that Waiter can follow the strategy and reach her respective goals on $G_1$ and $G_2$. Again, we consider the two boards $E(G_1)$ and $E(G_2)$ separately.

\medskip

\textit{$(1:b)$ game on $E(G_1)$:}
Split $G_1$ into the edge-disjoint subgraphs
$$
G_{1,1} = G_1[V_1,V_2] \cup G_1[V_2,V_3] \cup G_1[V_3,V_4]
 \cup G_1[V_4,V_1]
 ~~ \text{ and } ~~
G_{1,2} = G_1[V_1,V_3] \cup G_1[V_2,V_4]
$$
as before. 
First Waiter plays on $G_{1,1}$.
She ensures that
every vertex will have degree at least 
$\frac{\alpha n}{200b}$
in Client's graph. For this, she plays as follows:
as long as there is some $i\in [4]$ and
some vertex $v\in V_i$ with $d_{\client}(v) < \frac{\alpha n}{200b}$,
she offers $(b+1)$ edges between $v$ and $V_{i+1}$
(where we set $V_5=V_1$).
Afterwards, Waiter plays on $G_{1,2}$. Here she ensures 
that Client claims an edge in each edge set contained in
the family $\mathcal{F}_1$ from~\eqref{Ham:F1set}.
By the same calculation as in~\eqref{Ham:F1estimate}
we obtain
$$
\sum_{F\in \mathcal{F}_1} 2^{-|F|/(2b-1) + 1}
<
\sum_{F\in \mathcal{F}_1} 2^{-|F|/(4b) + 1}
= o(1)
$$
and hence, following
Theorem~\ref{thm:WC_transversal},
Waiter can reach her goal.
As before, because of the achievements from the games on $G_{1,1}$ and $G_{1,2}$, Waiter ensures
that Client occupies a graph with property~(1).

\medskip

\textit{$(1:b)$ game on $E(G_2)$:}
Waiter's goal is to make Client occupy 
a spanning subgraph of $G_2$
which fulfils property (2).
For this, we consider the same family 
$\mathcal{F}_2$
as in~\eqref{Ham:F2set}. Using the calculation
from~\eqref{Ham:F2estimate} we have
$$ \sum_{F \in \mathcal{F}_2} 2^{-|F|/(2b-1)+1} = o(1) ,$$
and hence, following Theorem~\ref{thm:WC_transversal},
Waiter can ensure that Client claims an edge 
in each set $F\in \mathcal{F}_2$,
which gives property~(2). 

\medskip\medskip

{\bf Case B ($b \geq \ln n$):} In this case we make use of some ideas from~\cite{BHKL2016}.
	Given $\alpha>0$, let
	$\eps = 10^{-4}\alpha,~ c = \eps^4,~ C = 10^3\eps^{-4}$.
	Further let $G_{\alpha}$ be any graph on $n$ vertices with 
	$n$ large enough 
	and minimum degree $\delta(G_{\alpha}) \ge \alpha n$, let $\ln n\leq b\leq cn$,
	and then with $p \ge \frac{C b}{n}$ 
	reveal $G_2 \sim G_{n,p}$ on $V(G_{\alpha})$.
	From now on we
	condition on the properties from
	Lemma~\ref{lem:gnp2} (with $\beta=\eps$).	
	We set $G_1=G_{\alpha} \setminus G_2$ and, as before,
	we have  $\delta(G_1) \ge \frac{\alpha n}{2}$.
	We will show that Waiter has a strategy to force
	Client to create a Hamilton cycle on $G_1\cup G_2$.
		
	Before we describe Waiter's strategy, let us fix a 
	partition 
	$V = V_1^{(1)}\cup V_1^{(2)} \cup V_2^{(1)}
		\cup V_2^{(2)} \cup V_3^{(1)}\cup V_3^{(2)}$ 
	with 
	$\frac{n}{6}-1\leq |V_i^{(j)}|\leq \frac{n}{6} + 1$,
	for every $i\in [3]$ and $j\in [2]$,
	such that in $G_1$ every vertex has degree at least 
	$\frac{\alpha n}{25}$ into each part $V_i^{(j)}$.
	The existence of such a partition is given by 
	Lemma~\ref{lem:vertex-partition}.
	
	\medskip
	
	{\bf Strategy description:} 
	Waiter's strategy consists of three 
	stages. We briefly describe each of the stages here.
	All further details will be given later in the strategy 
	discussion.

	In {\bf Stage I}, Waiter plays on $G_1$ 
	for at most $\frac{n}{\eps^2}$ rounds. 
	Here, she ensures that
	Client occupies a graph with the following property: 
	
	\begin{enumerate}
	\item[(P)]
	for every $i\in [3]$ and every $A\subset V_i^{(1)}\cup V_i^{(2)}$
	of size at most $\eps n$, 
	there are at least $9 |A|$ neighbours in 
	$V_{i+1}^{(1)} \cup V_{i+1}^{(2)}$  
	(where we set $V_4^{(j)}:=V_1^{(j)}$).
	\end{enumerate}
		
	Afterwards, in {\bf Stage II}, Waiter plays on $G_2$ for at most 
	$\frac{10n}{\eps^2}$ further rounds.
	Now she ensures that Client occupies a graph which has an edge between 
	any two disjoint sets of
	size $\eps n$.
	We will see later that by the end of the second stage Client's graph is a connected 
	$\frac{n}{5}$-expander.
	
	Finally, in {\bf Stage III}, by offering only 
	boosters, Waiter turns this expander
	into a Hamiltonian graph within less than $n$ rounds.
	
	\medskip	
	
	{\bf Strategy discussion:} If Waiter can follow the proposed 
	strategy, it is clear that she forces Client to occupy a
	Hamilton cycle. Hence, it remains to show that she can indeed	do so.
		
	Let us start with the discussion of {\bf Stage I}. 
	Here we assume that Waiter plays
	with bias $b'=\eps^2n > b$, so that we can use the trick of fake moves (Lemma~\ref{lem:fake_moves_WC})
	later on, in order to obtain a good upper bound on 
	the number of rounds for this stage.
	
	By disjointness
	Waiter can play on each of the boards
	$E_{G_1}(V_i^{(1)}\cup V_i^{(2)},V_{i+1}^{(1)}\cup V_{i+1}^{(2)})$
	with $i\in [3]$
	separately, and by symmetry it suffices to give a strategy
	for obtaining~(P) with $i=1$,
	by playing on the board	
	$E_{G_1}(V_1^{(1)}\cup V_1^{(2)},V_{2}^{(1)}\cup V_{2}^{(2)})$.	
	Waiter's strategy is as follows.
	
	First, playing only on $G_1[V_1^{(1)}\cup V_1^{(2)},V_2^{(1)}]$,
	Waiter ensures that Client occupies a transversal of
	\[ \mathcal{F}_1 = \left\{ E_{G_1}(A,B) \colon 
		A\subseteq V_1^{(1)}\cup V_1^{(2)}, 
		B \subseteq V_2^{(1)}, 
		|A|=\eps n \text{, and }|B|= \frac{n}{6} - \frac{\alpha n}{100} \right\} .\]
	Notice that for any $A$ and $B$ as described in $\mathcal{F}_1$	
	we have 
	$$d_{G_1}(v,B)\geq 
	d_{G_1}\left(v,V_2^{(1)}\right) - |V_2^{(1)}\setminus B|
		\geq \frac{\alpha n}{25} - \frac{\alpha n}{100} - 1 
		> \frac{\alpha n}{50}$$
	for all $v\in A$, and hence
	$$ e_{G_1}(A,B) > \frac{\eps\alpha n^2}{50} .$$
	In particular,
	\[ \sum_{F \in \mathcal{F}_1} 2^{-|F|/(2b'-1) + 1} \le 2^{2n} 2^{-\eps \alpha  n^2/(100b') + 1} =o(1), \]
	by the choice of $b'$ and $\eps$.
	Thus, Waiter can force a transversal of 
	$\mathcal{F}_1$ according to Theorem~\ref{thm:WC_transversal}.
	Note that this already gives 
	$|N_\client(A,V_2^{(1)})|\geq \frac{\alpha n}{100} - 1 > 2\eps n = 2|A|$
	for every $A\subseteq V_1^{(1)}\cup V_1^{(2)}$ of size $\eps n$.
	
	Afterwards, as the second step, 
	consider the inclusion minimal sets 
	$A \subseteq V_1^{(1)} \cup V_1^{(2)}$, such that 
	$|A| \le  \eps n$ and 
	$\left| N_{\client}(A, V_2^{(1)}) \right| < 9 |A|$.
	Denote the family of all these sets $A$ by $\mathcal{A}$,
	and let $x_1,\dots,x_r$ be all the vertices contained in these sets.
	Then $r\leq 2\eps n$ by the following reason: 
	If otherwise $r>2\eps n$, then 
	we could find sets
	$A_1,\ldots,A_k\in \mathcal{A}$ for some 
	$k\in\mathbb{N}$
	such that 
	$\eps n \leq \sum_{i\in [k]} |A_i| \leq 2\eps n$,
	and
	hence a set
	$A'\subset \bigcup_{i\in [k]} A_i$ of size $\eps n$.
	Then, by having a transversal of $\mathcal{F}_1$ in Client's 
	graph,
		$|N_\client(A',V_2^{(1)})|  
		\geq \frac{\alpha n}{100} - 1 > 20\eps n$;
	while on the other hand, by the definition of $\mathcal{A}$,
		$|N_\client(A',V_2^{(1)})|\leq 
		\sum_{i\in [k]} |N_\client(A_i,V_2^{(1)})|		
		\leq \sum_{i\in [k]} 9|A_i|
		\leq 18\eps n$,
	a contradiction.

	For every $i=1,\dots,r$ Waiter now plays 9 additional rounds
	where she offers
	exactly $9(b'+1)$ edges between 
	$x_i$ and the set 
	\[Y_i=\{ y \in V_2^{(2)}  \colon 
		d_\client(y,V_1^{(1)} \cup V_1^{(2)})=0, ~ x_iy\in E(G_1) \}\]
	(which is updated after every move), 
	therefore forcing $d_\client(x_i,V_2^{(2)}) \ge 9$.
	This is possible, because at any moment it holds 
	that $|Y_i| \ge d_{G_1}(x_i,V_2^{(2)}) - 9r 
	\geq \frac{\alpha n}{25} - 9r \ge 9(b'+1)$.
	
	Afterwards, the vertices $x_i$ have pairwise disjoint neighbourhoods 
	of size 9 in $V_2^{(2)}$, and it follows immediately
	that after this first stage, property (P) holds for Client's graph (for $i=1$).
	Indeed, let $A \subseteq V_1^{(1)} \cup V_1^{(2)}$ 
	be any set with $1 \leq |A| \leq \eps n$.
	We can partition $A$ into inclusion minimal sets 
	$X_1,\dots,X_s$ such that 
	$|N_\client(X_i,V_2^{(1)})|<9|X_i|$
	and at most one set $B$ with 
	$|N_\client(B,V_2^{(1)})| \ge 9 |B|$.
	Since the vertices $x \in \bigcup_{i=1}^s X_i$ 
	satisfy 
	$d_\client(x_i,V_2^{(2)}) \ge 9$ 
	and the neighbourhoods are disjoint, 
	we get
	\[ N_\client(A,V_2^{(1)} \cup V_2^{(2)})| 
		\ge  |N_\client(B,V_2^{(1)})| 
			+ \sum_{i=1}^s |N_\client(X_i, V_2^{(2)})| 
		\ge 9 |B| + 
	\sum_{i=1}^s 9 |X_i| = 9 |A|~ . \]

	Hence, Waiter can follow the proposed strategy for Stage I
	and force a graph with property~(P).
	Because she succeeds even when the bias 
	equals $b'$, it follows by the trick of fake moves
	(Lemma~\ref{lem:fake_moves_WC}), that Stage I with bias $b$
	can be done in less than $\frac{e(G_1)}{b'} < 
		\frac{n}{\eps^2}$ rounds.
	
	Moreover, we observe that
	property~(P) implies that Client's graph is an 
	$\eps n$-expander.	
	Indeed, consider any $S\subset V$ of size at most $\eps n$.
	Then there is some $i\in [3]$ such that
	$A:=S\cap (V_i^{(1)}\cup V_i^{(2)})$ has size at least 
	$\frac{|S|}{3}$. By property (P) we then conclude that
	\[
	|N_\client(S)| \geq |N_\client(A)| - |S|
		\geq 9|A| - |S| \geq 3|S| - |S| = 2|S|~ .
	\]

	Let us consider {\bf Stage II} now. 
	Here, in order to bound the number of rounds,
	we will assume that
	Waiter plays with bias $b'' = \frac{\eps^2np}{10} > b$.
	Consider the family
	\[\mathcal{F}_2 = 
	\{ E_{G_2}(A,B) \colon A,B \subseteq V \text{ disjoint and } 
		|A|=|B|=\varepsilon n  \} .\] 	
  Waiter again wants Client to occupy a transversal
	of $\mathcal{F}_2$.
	By the properties from Lemma~\ref{lem:gnp2}
	we have $|F|\geq 0.5 p (\eps n)^2$
	for every $F\in \mathcal{F}_2$, and therefore
	\[ \sum_{F \in \mathcal{F}_2} 2^{-|F|/(2b''-1) + 1} \le 
	2^{2n} 2^{- \eps^2 n^2 p/(4b'') + 1} =o(1)\]
	by the choice of $b''$. Thus, Waiter can follow this part of the proposed strategy.
	Note that this stage lasts at most $\frac{10n}{\eps^2}$
	rounds.	Indeed, since Waiter succeeds even when the 
	bias 
	equals $b''$, it follows by the trick of fake moves
	(Lemma~\ref{lem:fake_moves_WC}), that Stage II with 
	bias $b$
	can be done in less than $\frac{e(G_2)}{b''} 
		\leq \frac{10n}{\eps^2}$ 
	rounds,
	where we use that $e(G_2) \leq n^2p$ by the properties 
	from Lemma~\ref{lem:gnp2}.

	To finish the discussion of Stage~II notice 
	that Client's graph
	is a connected $\frac{n}{5}$-expander 
	analogously to the discussion of
	the Maker-Breaker Hamiltonicity game
	for $\ln n \leq b \leq \frac{cn}{\ln n}$.
		
	\smallskip		
		
	Finally, for {\bf Stage~III}, similarly 
	as in the proof of Theorem~\ref{thm:Ham}
	(for $\ln n \leq b \leq \frac{cn}{\ln n}$), we conclude that as 
	long as 
	Client's graph $\client$ is not Hamiltonian, 
	there are at least $b+1$ boosters available.
	Waiter offers these boosters to Client and, 
	after at most $n$ additional rounds, 
	there exists a Hamilton cycle in Client's 
	graph. 
\end{proof}

\bigskip


\section{$k$-Connectivity game} 
\label{sec:conn}

\subsection{Maker-Breaker $k$-vertex-connectivity game}

In this subsection we will prove Theorem~\ref{thm:vertex-connectivity}.
In order to prove the theorem, we will make use of the following two results, the first providing a partition of $G_{\alpha}$ into highly connected components, the second providing a necessary condition for a graph to be $r$-edge-connected.

\begin{lemma}[Lemma~1 in~\cite{BFKM2004}]\label{lem:partition}
Let $G$ be any graph on $n$ vertices
and $\delta(G)\geq \alpha n$. Then there exists an integer $s$ and a partition $V(G)=V_1\cup V_2\cup \ldots \cup V_s$
such that the following holds:
\begin{enumerate}
\item[(a)] $|V_i|\geq \frac{\alpha}{8}n$ for every $i\in [s]$, and
\item[(b)] $G[V_i]$ is $\frac{\alpha^2}{16}n$-vertex-connected
for every $i\in [s]$.
\end{enumerate}
\end{lemma}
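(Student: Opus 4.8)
The plan is to build the partition greedily: repeatedly split a part that is not $\tfrac{\alpha^2}{16}n$-vertex-connected along a small separator, while carefully tracking how the minimum degree \emph{inside} a part decays under such splits. Concretely, maintain a family $\mathcal{W}$ of pairwise disjoint subsets of $V(G)$ together with a reservoir $R=V(G)\setminus\bigcup\mathcal{W}$, starting from $\mathcal{W}=\{V(G)\}$ and $R=\varnothing$. While some $W\in\mathcal{W}$ is such that $G[W]$ is not $\tfrac{\alpha^2}{16}n$-vertex-connected, fix a minimum vertex separator $S\subseteq W$ of $G[W]$ (so $|S|<\tfrac{\alpha^2}{16}n$), write $W\setminus S=A\cup B$ with $A,B\neq\varnothing$, $A\cap B=\varnothing$, and no edges between $A$ and $B$, replace $W$ by $A$ and $B$ in $\mathcal{W}$, and move $S$ into $R$. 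Assign to each part the \emph{depth} equal to the number of splits along its ancestral chain from $V(G)$; the output will be $\mathcal{W}$ after re-absorbing $R$.

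First I would establish the degree-propagation invariant: every $W\in\mathcal{W}$ of depth $t$ satisfies $\delta(G[W])\ge \alpha n-t\cdot\tfrac{\alpha^2}{16}n$. This follows by induction on the depth, since when a part $W'$ of depth $t$ is split and $A$ is one of the new parts, a vertex $v\in A$ keeps in $G[A]$ all of its $G[W']$-neighbours except possibly those lying in $S$, of which there are fewer than $\tfrac{\alpha^2}{16}n$. As long as the right-hand side is positive this forces $|W|>\alpha n-t\cdot\tfrac{\alpha^2}{16}n$ for every part in $\mathcal{W}$.

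The delicate point, and the one I expect to be the main obstacle, is bounding the depth so that the degree loss never eats up the whole minimum degree; a naive induction of the form ``a part of size $\ge\tfrac{\alpha}{8}n$ splits into two parts of size $\ge\tfrac{\alpha}{8}n$'' fails because a vertex may have most of its neighbours outside its current part, and tracking sizes through the above invariant creates an apparent circularity (the bound on the number of splits is needed to control the degree loss, which is needed to bound the number of splits). I would break this with a ``first violating split'' argument: suppose some part reaches depth exceeding $8/\alpha$, and look at the first split that would create such a part. It splits a part $W'$ of depth at most $8/\alpha$, and just before it every part still has depth at most $8/\alpha$, hence, by the degree invariant, size larger than $\alpha n-\tfrac{8}{\alpha}\cdot\tfrac{\alpha^2}{16}n=\tfrac{\alpha}{2}n$. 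Since these parts are disjoint subsets of $V(G)$ there are fewer than $2/\alpha$ of them, so fewer than $2/\alpha$ splits have been performed so far; but the depth of $W'$ is at most that number, contradicting that it is essentially $8/\alpha$. Consequently every part always has depth at most $8/\alpha$, so $\delta(G[W])\ge\tfrac{\alpha}{2}n$ and $|W|>\tfrac{\alpha}{2}n$ throughout; in particular there are always fewer than $2/\alpha$ parts, the process terminates after fewer than $2/\alpha$ splits, and $|R|<\tfrac{2}{\alpha}\cdot\tfrac{\alpha^2}{16}n=\tfrac{\alpha}{8}n$.

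Finally I would re-absorb the reservoir. When the process halts, $\mathcal{W}=\{V_1,\dots,V_s\}$ with each $G[V_i]$ being $\tfrac{\alpha^2}{16}n$-vertex-connected and $|V_i|>\tfrac{\alpha}{2}n$. Each $v\in R$ has at most $|R|-1<\tfrac{\alpha}{8}n$ neighbours inside $R$, hence at least $\tfrac{7\alpha}{8}n$ neighbours in $\bigcup_i V_i$, so some $V_i$ contains at least $\tfrac{7\alpha}{8}n/s>\tfrac{7\alpha^2}{16}n>\tfrac{\alpha^2}{16}n$ of them; add $v$ to that $V_i$. Processing the vertices of $R$ one at a time and using the elementary fact that adding to a $k$-connected graph on more than $k$ vertices a new vertex with at least $k$ neighbours preserves $k$-connectivity, we obtain a genuine partition of $V(G)$ into (enlarged) parts $V_1,\dots,V_s$, each inducing a $\tfrac{\alpha^2}{16}n$-vertex-connected graph and each of size larger than $\tfrac{\alpha}{2}n\ge\tfrac{\alpha}{8}n$, as required. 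All steps besides the depth bound are routine bookkeeping.
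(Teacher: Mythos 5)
The paper does not prove this lemma at all --- it is quoted verbatim from the cited source [BFKM2004] --- so there is no in-paper argument to compare against; your proof is correct and follows essentially the same greedy separator-splitting scheme as the original. In particular your ``first violating split'' device genuinely breaks the circularity: just before such a split every part has depth at most $8/\alpha$, hence minimum induced degree (and so size) greater than $\tfrac{\alpha}{2}n$, hence there are fewer than $2/\alpha$ parts and therefore fewer than $2/\alpha$ splits so far, which is incompatible with a part of depth exceeding $8/\alpha-1$ since $2/\alpha\le 8/\alpha-1$ for $\alpha\le 1$; the remaining steps (degree propagation, the bound $|R|<\tfrac{\alpha}{8}n$, and re-absorbing $R$ vertex by vertex using that a vertex with $k$ neighbours in a $k$-connected graph preserves $k$-connectivity) all check out.
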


\begin{thm}[Theorem~6.2 in~\cite{Karger1993}]\label{thm:karger}
There exists a constant $d>0$ such that the following holds: Let $G$ be an $r$-edge-connected graph on $n$ vertices. 
Then for every $t \geq 1$ there are at most $dn^{2t}$ cuts of size less than $rt$ in $G$.
\end{thm}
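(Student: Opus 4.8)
The plan is to reprove this via Karger's random edge-contraction argument, the standard route to cut-counting bounds of this kind. The two structural facts that drive it are the following. Since $G$ is $r$-edge-connected, every vertex has degree at least $r$ (the star at a vertex is a cut), and this property survives contractions: after contracting edges, every super-vertex corresponds to a nonempty proper subset $T \subsetneq V(G)$, and its degree in the contracted multigraph equals $e_G(T, V(G)\setminus T) \ge r$. Hence a contracted multigraph on $i$ super-vertices has at least $ir/2$ edges, and every cut of such a graph is a cut of $G$ of exactly the same size. I would set $j = \lceil 2t \rceil$.

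Next I would fix an arbitrary cut $C = (S, \bar S)$ of $G$ with $e_G(S, \bar S) = k < rt$ and analyse the randomized procedure that repeatedly contracts a uniformly random edge (counted with multiplicity) until $j$ super-vertices remain, and then outputs a uniformly random cut of the resulting $j$-vertex multigraph. As long as $C$ has survived it is still a cut of size $k$ of the current graph, so when $i$ super-vertices remain the contracted edge lies in $C$ with probability at most $k/(ir/2) < 2t/i$. Multiplying these bounds, $C$ survives down to $j$ super-vertices with probability at least $\prod_{i=j+1}^{n}(1 - 2t/i)$, which I would bound below by $c_1(t)\, n^{-2t}$ for a constant $c_1(t) > 0$: when $2t$ is an integer the product telescopes to $\binom{n}{2t}^{-1} \ge (2t)!\, n^{-2t}$, and the general case costs only an extra $t$-dependent factor. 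Conditioned on survival, $C$ is one of fewer than $2^{j-1} < 2^{2t}$ cuts of the current graph, so the final random choice returns exactly $C$ with probability more than $2^{-2t}$. Thus each fixed cut of size less than $rt$ is output with probability at least $c_2(t)\, n^{-2t}$, where $c_2(t) = c_1(t)\, 2^{-2t} > 0$; since distinct cuts correspond to disjoint output events whose probabilities sum to at most $1$, there are at most $c_2(t)^{-1} n^{2t} =: d\, n^{2t}$ of them.

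All of these steps are routine, and I expect the only real obstacle to be the bookkeeping of the constant. The argument naturally produces $d = d(t)$, which is harmless since $t$ is a fixed constant in every application of the theorem in the paper; obtaining a value of $d$ that is independent of $t$ is not needed and would require a more careful estimate. The degenerate case $2t > n$, where there is nothing to contract, is handled trivially: then $d\, n^{2t} \ge n^n$ already exceeds the total number $2^{n-1} - 1$ of cuts of $G$.
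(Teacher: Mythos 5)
The paper does not prove this statement at all: it is imported as a black box from Theorem~6.2 of the cited paper of Karger, so there is no internal proof to compare against. Your reconstruction is the standard random-contraction argument, which is essentially how Karger proves it, and its core is sound: $r$-edge-connectivity gives every super-vertex degree at least $r$ in every contracted multigraph, hence at least $ir/2$ edges on $i$ super-vertices; a surviving cut of size $k<rt$ is hit by the next contraction with probability less than $2t/i$; the product over $i$ from $n$ down to $j+1=\lceil 2t\rceil+1$ telescopes to roughly $\binom{n}{2t}^{-1}\ge (2t)!\,n^{-2t}$; the final uniform choice costs a further factor $2^{-2t}$; and disjointness of the output events converts the lower bound on the output probability into the desired upper bound on the number of cuts. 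The degenerate case $\lceil 2t\rceil\ge n$ is correctly dispatched by comparing with the trivial bound $2^{n-1}-1$.

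The one claim you should retract is that a $t$-dependent constant "is harmless since $t$ is a fixed constant in every application of the theorem in the paper." It is not: in the proof of Theorem~\ref{thm:vertex-connectivity} the theorem is applied with $r=\frac{\alpha^2}{20}n$ and $t=\lceil\frac{j+1}{r}\rceil$ for $j$ ranging up to $n^2$, so $t$ grows linearly in $n$, and the subsequent display absorbs $d$ into the slack between $n^{2\lceil (j+1)/r\rceil}$ and $n^{4j/r}$; a $d(t)$ growing superexponentially in $t$ would genuinely break that computation. Fortunately no "more careful estimate" is needed, because your own argument already yields a uniform constant: you get $d(t)\le 2^{2t}/c_1(t)$ with $c_1(t)$ at least of order $\lceil 2t\rceil!$ (the non-integer case only costs a bounded Gamma-factor correction), and $2^{2t}/(2t)!\le 2$ for all $t\ge 1$, so $d$ can be taken to be an absolute constant. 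State this observation explicitly instead of waving the quantifier order away, and the proof matches the theorem as stated.
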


\begin{proof}[Proof of Theorem~\ref{thm:vertex-connectivity}]

Let $\alpha>0$ and $k\geq 1$ be given. Choose
$C = 10^5 k^3 \alpha^{-3} \ln\left(8k\alpha^{-1}\right) $ and
$c = \alpha^4 10^{-5} k^{-2}$, and let
$1 \le b \le \frac{cn}{\ln n}$ and $p \ge \frac{Cb}{n^2}$.
Before we describe a strategy for Maker, let us observe that
$G \sim G_{\alpha}\cup G_{n,p}$ satisfies the following properties with
probability at least $1-\exp(-cpn^2)$:
\begin{itemize}
\item[(i)] there exists an integer $s$ and a partition $V(G)=V_1\cup V_2\cup \ldots \cup V_s$ as stated in Lemma~\ref{lem:partition},
\item[(ii)] for each $j\in [s]$ there exists a partition
$V_j = V_{j,1}\cup  V_{j,2}\cup \ldots \cup V_{j,k}$, such that
$e_G(V_{i,\ell},V_{s,\ell}) \geq \frac{\alpha^2 n^2 p}{200k^2} $ for every $i\in [s-1]$ and $\ell \in [k]$.
\end{itemize}

Indeed, Lemma~\ref{lem:partition} applied to $G_{\alpha}$ gives a partition $V(G_{\alpha})=V_1\cup V_2\cup \ldots \cup V_s$
as desired.
For any $j\in [s]$, fix any partition
$V_j = V_{j,1}\cup  V_{j,2}\cup \ldots \cup V_{j,k}$
such that 
$|V_{j,\ell}|\geq \lfloor \frac{|V_j|}{k} \rfloor
	\geq \frac{\alpha n}{10k}$.
Only afterwards reveal the edges of $G'\sim G_{n,p}$
on $V(G_{\alpha})$, and set $G=G_{\alpha}\cup G'$.
Adding the edges of $G'$ to $G_{\alpha}$ does not destroy the properties~(a) and~(b) from Lemma~\ref{lem:partition},
and hence
$V(G)=V_1\cup V_2\cup \ldots \cup V_s$ stays a partition as required for~(i).
Moreover, notice that (a) implies $s\leq \frac{8}{\alpha}$.
In order to get (ii), observe that $e_{G'}(V_{i,\ell},V_{s,\ell})\sim \operatorname{Bin}(|V_{i,\ell}||V_{s,\ell}|,p)$ with 
$\Exp[e_{G'}(V_{i,\ell},V_{s,\ell})]=|V_{i,\ell}||V_{s,\ell}| p \geq \frac{\alpha^2n^2p}{100k^2}$.
Therefore, using Chernoff's inequality (Lemma~\ref{lem:Chernoff1}), we obtain
\begin{align*}
\Prob\left( \exists i\in [s-1], \ell\in[k]:~ e_{G'}(V_{i,\ell},V_{s,\ell}) < \frac{\alpha^2}{200k^2}n^2p \right) 
	& \leq sk \cdot e^{-\alpha^2 n^2p / (800k^2)} \\
	& \leq e^{\ln\left( 8k / \alpha \right)-(10^{-4}k^{-2}\alpha^2 + c)n^2p} 
		\leq e^{-cn^2p} ~ ,	
\end{align*}
where the second inequality follows from the choice of $c$ and since $s\leq \frac{8}{\alpha}$, and
the last inequality follows since $p\geq \frac{Cb}{n^2} \ge \frac{C}{n^2}$ and by the choice of $C$.  

\medskip

From now on we will condition on the properties~(i) and~(ii).
Next, we will first describe a strategy for Maker in a $(1:b)$ game on $G$ and then we will show that she can follow that strategy
and that it leads to a $k$-vertex-connected spanning subgraph of $G$. 

\medskip

{\bf Strategy description:} In order to describe the strategy, consider the $2s-1$ edge-disjoint boards
$E_{G}(V_i)$ for all $i\in [s]$, and $E_G(V_i,V_s)$ for all $i\in [s-1]$.
Enumerate all these boards in an arbitrary way
with the integers $0,1,\ldots,2s-2$. In round $i$, let Maker play on board $i$ (mod~$2s-1$). Then, between any two moves on the same board, Breaker claims at most $(2s-1)b =:b_s$ 
edges. Hence, using the trick of fake moves (see Lemma~\ref{lem:fake_moves_MB}), we can assume that on each board we separately play a $(1:b_s)$ Maker-Breaker game.
Now, on each of the boards $E_G(V_i)$ Maker plays in such a way that she obtains
a $k$-vertex-connected spanning subgraph of $G[V_i]$,
and on each of the boards $E_G(V_i,V_s)$ she makes sure to claim
a matching of size at least $k$. All details will be given later in the strategy discussion.

\medskip

{\bf Strategy discussion:} Notice that, if Maker can follow the strategy,
then her final graph $\maker$ will be $k$-vertex-connected.
Indeed, let $K\subset V(\maker)$ be any subset of size at most $k-1$.
Then $\maker[V_i\setminus K]$ is connected, since $\maker[V_i]$ is $k$-vertex-connected, for every $i\in [s]$, and due to the matchings of size $k$, there is at least one edge between $V_s$ and $V_i$ in the graph $\maker-K$, for every $i\in [s-1]$. So, $\maker-K$ is connected.

\smallskip

Now, it remains to show that Maker can indeed follow her strategy. We consider each of the above mentioned
boards separately. 

\medskip

{\em $(1:b_s)$ game on $E_G(V_i)$:} Maker's goal is to
occupy a spanning $k$-vertex-connected subgraph of $G[V_i]$. This part of her strategy is motivated by~\cite[Theorem~1.6]{AHK2010}. 
Let us define
$$
\mathcal{F} := \big\{ E_{G}(S,V_i\setminus (S\cup K)):~ 
	K\subset V_i,~ 0\leq |K|\leq k-1,~ S\subset V_i\setminus K
	\big\}.
$$
If Maker manages to occupy
a transversal of $\mathcal{F}$, then
the following holds for her final subgraph $\maker_i$
of $G[V_i]$: for every subset $K\subset V_i$
of size at most $k-1$, the graph $\maker_i-K$
is connected since there is at least one edge in
every cut $(S,V_i\setminus (S\cup K))$ of $\maker_i-K$.
That is, $\maker_i$ is $k$-vertex-connected.

Now, according to Corollary~\ref{cor:beck_transversal},
it suffices to show that
$$
\sum_{F\in \mathcal{F}} 2^{-|F|/b_s + 1} < 1~ .
$$

For this notice that for any $K\subset V_i$ of size at most $k-1$
the following holds: $G[V_i]$ is $\frac{\alpha^2}{16}n$-vertex-connected
and hence the vertex-connectivity of $G[V_i\setminus K]$
is at least $\frac{\alpha^2}{16}n - (k-1) > \frac{\alpha^2}{20}n$.
In particular,
$e_G(S,V_i\setminus (S\cup K))\geq \frac{\alpha^2}{20}n$
for every $S\subset V_i\setminus K$.
Now, applying Theorem~\ref{thm:karger} (with $r=\frac{\alpha^2}{20}n$ and $t=\lceil \frac{j+1}{r} \rceil$), we obtain
\begin{align*}
\sum_{F\in \mathcal{F}} 2^{-|F| / b_s + 1}
	& \leq 
	\sum_{K\subset V_i,\atop |K|\leq k-1}	
	\sum_{j= \frac{\alpha^2}{20}n}^{n^2} |\{S\subset V_i\setminus K:~ 
	e_G(S,V_i\setminus (S\cup K))=j\}|
		\cdot 2^{- j / b_s + 1}  \\
	& \leq 
	\sum_{K\subset V_i,\atop |K|\leq k-1}
	\sum_{j= \frac{\alpha^2}{20}n}^{n^2} 
		d|V_i\setminus K|^{2 \lceil (j+1) / r \rceil} 
			\cdot 2^{- j / (2b_s)} \\
	& \leq n^{k} \sum_{j= \frac{\alpha^2}{20}n}^{n^2} 
		n^{4j / r} \cdot  2^{-j / 2b_s} \\
	& \leq n^{k} \sum_{j= \frac{\alpha^2}{20}n}^{n^2}
		\exp\left( \frac{4}{r}\cdot \ln n -\frac{\ln 2}{2b_s} \right)^j 		\\
	& \leq 
		n^{k} \sum_{j= \frac{\alpha^2}{20}n}^{n^2}
		\exp\left(  -\frac{100k}{\alpha^2} \cdot \frac{\ln n}{n} \right)^j \\
	& \leq n^{k} \sum_{j= \frac{\alpha^2}{16}n}^{n^2}
		\exp\left(  - 5k \ln n  \right) = o(1)
\end{align*}
where the fifth inequality holds by the choice of $c$ and since $b_s<2s b \leq \frac{16c}{\alpha} \cdot \frac{n}{\ln n}$.

\medskip

{\em $(1:b_s)$ game on $E_G(V_i,V_s)$:} In order for Maker to claim 
a matching of size $k$ between $V_i$ and $V_s$, it is enough to claim one edge between $V_{i,\ell}$ and $V_{s,\ell}$ for every $\ell\in [k]$.
As this takes $k$ rounds in total, at most $k(b_s+1)$ edges
can be claimed in the meantime. Thus, Maker succeeds easily
if $e_G(V_{i,\ell},V_{s,\ell})>k(b_s+1)$
for every $\ell\in[k]$. The latter is the case since by (ii) we obtain 
\[
e_G(V_{i,\ell},V_{s,\ell}) 
\geq \frac{\alpha^2}{200k^2}n^2p \geq \frac{\alpha^2 Cb}{200k^2}  
\geq 4skb > k (b_s+1)\, ,
\]
where the third and the last inequality follow by the definition of $C$ and $b_s$, respectively.
\end{proof}

\medskip

\subsection{Waiter-Client $k$-vertex-connectivity game}

In this subsection we will prove Theorem~\ref{thm:vertex-connectivity_WC}.

\begin{proof}[Proof of Theorem~\ref{thm:vertex-connectivity_WC}]
Let $\alpha>0$ and $k\geq 1$ be given. 
We set $\beta = \frac{\alpha^2}{80k}$ and let $\gamma$ be returned by Lemma~\ref{lem:partition_connected}.
Now we choose $\eps = 10^{-4}\gamma$,
$c = 10^{-5}\alpha^4k^{-2}\eps^{3}$ and
$C = 10^6k^3\alpha^{-3}\ln(8k\alpha^{-1})$.

Before we describe Waiter's strategy
we split the board into suitable subboards. 
As a first step, fix a partition
$V(G_\alpha)=U_1\cup U_2\cup \ldots \cup U_k$  
such that
	\begin{enumerate}
	\item[(1)] $\frac{n}{k}-1 \leq |U_i|
	   \leq \frac{n}{k} + 1$ for all $i \in [k]$,
	\item[(2)] $e_{G_{\alpha}}(v, U_i) \geq \frac{\alpha}{2}|U_i|$ for all $v \in V(G)$ and $i \in [k].$
		\end{enumerate}
Such a partition exists by Lemma~\ref{lem:vertex-partition}.
Next, we additionally split each of the sets $U_i$, $i\in [k]$, to obtain a partition
$U_i=U_{i,1}\cup U_{i,2} \cup \ldots \cup U_{i,s_i}$
such that
\begin{enumerate}
\item[(a)] $|U_{i,j}|\geq \frac{\alpha}{20k}n$, and
\item[(b)] $G_{\alpha}[U_{i,j}]$ is $\beta n$-vertex-connected
\end{enumerate}
for every $j\in [s_i]$. Such a partition can be found by Lemma~\ref{lem:partition}.
For every $i\in [k]$ and $j\in [s_i]$, set $G_{i,j}:=G_{\alpha}[U_{i,j}]$. Applying Lemma~\ref{lem:partition_connected} we can find a partition
$G_{i,j}=G_{i,j}^1\cup G_{i,j}^2$ such that
both parts are 
$\gamma n$-vertex-connected graphs on $U_{i,j}$.
Only afterwards, we reveal the edges of $G'\sim G_{n,p}$ and observe that
with probability at least $1-\exp(-cn^2p)$ the following holds:
\begin{enumerate}
\item[(c)] $e_{G'}(U_{i,j_1},U_{i,j_2}) \geq 
\frac{\alpha^2}{800k^2} n^2 p$ for every $i$ and $j_1\neq j_2$.
\end{enumerate}
The proof of (c) is analogous to the discussion of (ii) in the proof of Theorem~\ref{thm:vertex-connectivity}.
From now on, we will condition on (c) being satisfied,
and show that Waiter has a strategy to
force a $k$-vertex-connected spanning subgraph of $G=G_{\alpha}\cup G'$.

\medskip

{\bf Strategy description:} Waiter's strategy consists of four stages. We briefly describe each of these stages here
by mentioning the board and the goal of the stage.
All further details will be given later in the strategy discussion.

In {\bf Stage~I}, Waiter plays on the board
$G_{I}:=\bigcup_{i,j} E(G_{i,j}^1)$. Here she ensures that Client
creates an $\eps n$-expander on $V(G)$. 
If Waiter succeeds, then immediately afterwards each component in Client's graph has size at least $\eps n$ and hence there are at most $\eps^{-1}$ such components. Furthermore, each of these components 
must be a subset of some set $U_{i,j}$ with $i\in [k]$ and $j\in [s_i]$.
In {\bf Stage~II}, Waiter plays on the board
$G_{II}:=\bigcup_{i,j} E(G_{i,j}^2)$ where she makes sure that  Client's graph becomes connected on each of the sets $U_{i,j}$. 
Next, in {\bf Stage~III}, Waiter plays on the board
$G_{III}:=\bigcup_{i,j_1,j_2} E_{G'}(U_{i,j_1},U_{i,j_2})$.
She forces Client to make $U_i$ a connected component in her graph for every $i\in [s]$. 
In {\bf Stage~IV}, Waiter considers the board
$G_{IV}:=\bigcup_{i_1,i_2} E_{G_{\alpha}}(U_{i_1},U_{i_2})$.
She ensure, that by the end of this stage 
Client's graph $\client$ satisfies the following:
$e_{\client}(v,U_{i_2})>0$ for every 
$i_1\neq i_2$ and $v\in U_{i_1}$.

\medskip

{\bf Strategy discussion:}
If Waiter can follow the proposed strategy, 
Client's graph will be $k$-vertex-connected by the end of the game.
This can be seen as follows. Let $K\subset V(G)$ be any set of size at most $k-1$. Then there exists some $i\in [k]$ such that
$U_i\cap K = \varnothing$. By Stage~III, $U_i$ is a connected component in Client's graph;
lastly by Stage~IV, every other vertex in $V\setminus (K\cup U_i)$ has a neighbour in $U_i$; i.e.~$\client-K$ is connected.

\medskip

Hence, it remains to be shown that Waiter can follow the proposed strategy. We will discuss each stage separately.
However, before doing so, observe that the four different boards $G_{I}\ldots,G_{IV}$ are pairwise disjoint,
i.e.~Waiter can play on these boards one after the other.

\medskip

{\bf Stage~I.} Since $G_{i,j}^1$ is $\gamma n$-vertex-connected
for every $i,j$, we have $\delta(G_{I})\geq \gamma n$. 
Waiter follows the strategy from Stage I in Case B of the proof of Theorem~\ref{thm:Ham_WC} and thus forces an $\eps n$-expander.
Note that the mentioned strategy of Case B only used the fact that
the game was played on a graph with minimum degree at least $\frac{\alpha n}{2}$ (which is replaced here with $\gamma n$);
the strategy worked for a bias $b'=\eps^2 n$ 
(now $\eps$ is chosen depending on $\gamma$ instead of $\alpha$) and the same strategy can be used for any smaller bias by the trick of fake moves (Lemma~\ref{lem:fake_moves_WC}).

\medskip

{\bf Stage~II.} When Waiter enters Stage II, Client's graph consists of at most $\eps^{-1}$ components, each of which has size at least $\eps n$ and is contained in one of the sets $U_{i,j}$. Let $i\in [k]$ and $j\in [s_i]$ be fixed.
Waiter plays on $E(G_{i,j}^2)$ as follows.
As long as Client's graph on $U_{i,j}$ is not connected,
Waiter looks for two components $A,B\subset U_{i,j}$
such that there are at least $b+1$ free edges in 
$G_{i,j}^2[A,B]$. Waiter then offers these edges to Client and thus reduces the number of components by 1; she repeats this step until $U_{i,j}$ is a connected component in Client's graph.

\medskip

In order to show that Waiter can indeed follow this strategy, it remains to be shown that we can always find such sets $A,B$ as described above. We do this as follows.
Since Client's graph is assumed to be disconnected on $U_{i,j}$ there must be two vertices $v,w\in U_{i,j}$
which belong to different components.
Now $G_{i,j}^2$ is $\gamma n$-vertex-connected,
and hence in this graph there must be $\gamma n$ internally vertex-disjoint paths between $u$ and $v$.
Each of these paths must have at least one edge connecting
two of Client's components. Since there are at most $\eps^{-1}$ components, there must be 
some pair $A,B$ of components, between which $G_{i,j}^2$ has
at least $\eps^{2} \gamma n \geq b+1$ edges.
Waiter can offer $b+1$ of these edges and thus follow her strategy. Note that this way she does not offer 
edges between any other pair of components,
which makes it possible to repeat this argument until
$U_{i,j}$ is a connected component.

\medskip

{\bf Stage~III.}
When Waiter enters Stage III, Client's components are the sets $U_{i,j}$. Now, for each $i\in [k]$ and distinct 
$j_1,j_2\in [s_i]$, Waiter plays exactly one
round on $E_{G'}(U_{i,j_1},U_{i,j_2})$ 
offering $b+1$ arbitrary edges. This is possible by (c)
and since 
$$
\frac{\alpha^2n^2p}{800k^2} 
\geq 
\frac{\alpha^2Cb}{800k^2} > b+1
$$
by the choice of $C$. One can easily check that
this makes Client's graph connected
on each set $U_i$ with $i\in [k]$.

\medskip

{\bf Stage~IV.}
Let distinct $i_1,i_2\in [k]$ be fixed.
Waiter plays on the graph 
$G_{\alpha}[U_{i_1},U_{i_2}]$ 
to make sure that every vertex $v\in U_{i_1}$
gets a neighbour in $U_{i_2}$ in Client's graph,
and vice versa. For this, note that by (1) and (2)
$G_{\alpha}[U_{i_1},U_{i_2}]$  has minimum
degree at least $\frac{\alpha n}{3k}$.
We can split this graph into two subgraphs $H_1$ and $H_2$ 
on the same vertex set $U_{i_1}\cup U_{i_2}$, each having minimum degree at least $\frac{\alpha n}{10k}$. Indeed, a random partition of
the edges of $G_{\alpha}[U_{i_1},U_{i_2}]$ can be used to prove the existence of $H_1$ and $H_2$.
Using only the edges of $H_1$,
Waiter ensures that every vertex in $U_{i_1}$
gets a neighbour in $U_{i_2}$. This is possible
as $d_{H_1}(v,U_{i_2})\geq \frac{\alpha n}{10k}  \geq b+1$
for every $v\in U_{i_1}$. Then, Waiter repeats the same
with $H_2$ and every vertex $v\in U_{i_2}$.
This finishes the proof.
\end{proof}

\bigskip


\section{Unbiased $H$-games}
\label{sec:hgame}

For the study of the $H$-game we will use
regularity tools.
For a first example 
we will prove the following proposition.

\begin{prop}
\label{prop:oddcycle}
Let $H$ be an odd cycle of length at least 5, $\alpha>0$, $G_{\alpha}$ be a graph on $n$ vertices with $\delta(G_{\alpha})\geq \alpha n$, and $p = \omega(n^{-2})$.
Then a.a.s.~the following holds: playing an unbiased Maker-Breaker game on $G_{\alpha}\cup G_{n,p}$, Maker has a strategy to claim a copy of $H$.
\end{prop}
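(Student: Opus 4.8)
The plan is to let $G_\alpha$ do almost all of the work and to use $G_{n,p}$ only to supply a single ``parity-breaking'' chord. Write $\ell=2m+1$ with $m\ge 2$. Since $\delta(G_\alpha)\ge\alpha n$ forces $G_\alpha$ to have edge density more than $\alpha$, I would first apply Lemma~\ref{lem:reg-turan} with $r=2$, $\delta=\alpha/2$ and a sufficiently small $\eps=\eps(\alpha,\ell)$ to obtain disjoint sets $V_1,V_2\subseteq V(G_\alpha)$ with $|V_1|,|V_2|\ge\eta n$ such that $(V_1,V_2)_{G_\alpha}$ is $\eps$-regular of density at least $d_0:=\alpha/4$. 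Call a vertex $w\in V_i$ \emph{typical} if it has at least $(d_0-\eps)|V_{3-i}|$ neighbours in $V_{3-i}$; by $\eps$-regularity all but at most $2\eps|V_i|$ vertices of $V_i$ are typical, and whether a vertex is typical depends only on $G_\alpha$. Let $V_1'\subseteq V_1$ be the set of typical vertices of $V_1$, so $|V_1'|=\Omega(n)$.

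Now I would reveal $G_{n,p}$. The number of $G_{n,p}$-edges with both endpoints in the fixed set $V_1'$ is $\mathrm{Bin}\bigl(\binom{|V_1'|}{2},p\bigr)$, whose mean is $\Omega(n^2)\cdot\omega(n^{-2})\to\infty$; hence a.a.s.\ there are at least $2\ell$ of them, and we condition on this. Maker's strategy is then: in her first move she claims a free $G_{n,p}$-edge $uv$ with $u,v\in V_1'$ (possible, since at least $2\ell$ candidates exist and Breaker has claimed at most one edge so far), and in the next $2m$ rounds she builds, one edge per round, an alternating path $u=x_0,x_1,\dots,x_{2m-1},x_{2m}=v$ with $x_{2i}\in V_1$ and $x_{2i+1}\in V_2$, using only edges of $G_\alpha$ between $V_1$ and $V_2$. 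Together with the chord $uv$ this is a copy of $C_\ell\cong H$ in Maker's graph; the parity works out precisely because $uv$ lies inside one part while the path has even length.

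That Maker can always extend rests on the standard fact that a low-density $\eps$-regular pair is path-rich: for $\eps$ small enough and any two distinct typical $u,v\in V_1$, the number of alternating $u$--$v$ paths of length $2m$ in $(V_1,V_2)_{G_\alpha}$ whose internal vertices are all typical is at least $c\,n^{2m-1}$ for some $c=c(\alpha,\ell)>0$; this follows by repeatedly slicing (Lemma~\ref{lem:slicing}) together with the observation that, by $\eps$-regularity, a typical vertex shares $\Omega(n)$ common neighbours with a constant fraction of any linear-sized vertex set. Maker then plays greedily, maintaining the invariant that after claiming $j$ of the $2m$ path-edges the current partial path has at least $c\,n^{2m-1-j}2^{-j}$ completions to such a path avoiding all of Breaker's edges: at each step she picks as the next vertex one lying on at least a $1/n$-fraction of these completions, while one new Breaker edge destroys only an $O(1/n)$-fraction, so the invariant survives all $j\le 2m-1$ (this uses that Breaker makes only $O(1)$ moves over the $\ell=O(1)$ rounds). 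At $j=2m-1$ the invariant yields a completion with no free vertices left, i.e.\ the edge $x_{2m-1}v$ is still free, and Maker claims it.

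The step I expect to be the main obstacle is the path-richness claim with pinned endpoints: in a regular pair of density below $1/2$ two typical vertices need not share any neighbour, so one cannot join $u$ and $v$ by a path of length two — this is exactly why the triangle is excluded here (it is handled instead within the clique game of Theorem~\ref{thm:clique_Maker}) — but when $m\ge 2$, i.e.\ $\ell\ge 5$, the path has a free internal vertex on each side, which gives enough room to route around $u$ and $v$ and makes the count go through; the game-theoretic part is then routine since Breaker claims only a bounded number of edges before Maker finishes. The only probabilistic input used anywhere is the a.a.s.\ existence of the random chords inside $V_1'$, which is exactly where the hypothesis $p=\omega(n^{-2})$ enters.
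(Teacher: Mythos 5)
Your overall skeleton is the same as the paper's: find an $\eps$-regular pair $(V_1,V_2)$ in $G_\alpha$ via Lemma~\ref{lem:reg-turan}, use $p=\omega(n^{-2})$ only to supply one random chord inside the typical vertices of $V_1$, and close an odd cycle through the regular pair. Your path-richness count with pinned typical endpoints is also fine for $m\ge 2$. The genuine gap is in the endgame of your greedy strategy. You build the path one edge at a time toward the \emph{fixed} vertex $v$, so your last two moves are forced to be $x_{2m-2}x_{2m-1}$ and then $x_{2m-1}v$. The moment you claim $x_{2m-2}x_{2m-1}$, Breaker's reply is to claim the unique edge $x_{2m-1}v$, and your cycle is dead; switching to another candidate $x_{2m-1}'$ just restarts the same race, which Breaker wins every time because each of your threats needs two further edges and you reveal them one at a time. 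Your invariant does not save you here: the statement ``one Breaker edge destroys only an $O(1/n)$-fraction of completions'' is true only while the number of completions is $\Omega(n)$; at $j=2m-1$ there is exactly one completing edge left and Breaker destroys all of it. To win, your penultimate position must contain at least two (in fact it is cleanest to arrange $\Omega(n)$) distinct free edges each of which \emph{single-handedly} completes a copy of $C_{2m+1}$, and nothing in your strategy sets up such a double threat.

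The paper's proof fixes exactly this point by decoupling the terminal vertex of the path from a single pinned target: before building the path, Maker spends $\Theta(n)$ rounds claiming a star of $\frac{\alpha}{16}|V_2|$ edges from $y$ (one endpoint of the random chord $xy$) into $V_2'$, obtaining a Maker-neighbourhood $N_y$ of linear size. The greedy path then starts at $x$ and only has to \emph{land anywhere in} $N_y$: its last vertex $v_{2\ell-1}$ is kept in the set $V_1^{\ast}$ of vertices with $\Omega(n)$ $G$-neighbours in $N_y$, so at the final step there are $\Omega(n)$ free edges from $v_{2\ell-1}$ into $N_y$, any one of which instantly closes the cycle because the edge back to $y$ is already Maker's. (Regularity is used again, via Lemma~\ref{lem:slicing} applied to $(V_1',N_y)$, to show that $V_1^{\ast}$ is almost all of $V_1$.) If you add this star-building stage, and replace ``path ending at $v$'' by ``path ending in $N_y$'' throughout, your argument goes through; without it, the strategy as written loses to Breaker on the last move.
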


\begin{proof}
Let $H=C_{2\ell +1}$ with $\ell\geq 2$.
An application of Lemma~\ref{lem:reg-turan} with 
$\eps=\frac{\alpha^2}{10^3}$ (and $\delta=\alpha$)
leads to a constant $\eta>0$. 
Now, by Lemma~\ref{lem:reg-turan} we can find an
$\eps$-regular pair $(V_1,V_2)$ in $G_{\alpha}$, with density at least $\frac{\alpha}{2}$ and such that $|V_1|\geq |V_2| \geq \eta n$. 
Let $G=G_{\alpha}[V_1,V_2]$ be the bipartite subgraph of $G_{\alpha}$ with vertex classes $V_1$ and $V_2$.
Let $V_i':=\left\{v\in V_i:~ d_G(v,V_{3-i}) \ge \frac{\alpha}{3}|V_{3-i}| \right\}$. Then, by the $\eps$-regularity of $(V_1,V_2)_G$, we get 
$|V_i'|\ge (1-\eps) |V_i|$ for $i\in [2]$.
Then for any $v\in V_i'$ it holds that
\[
d_G(v,V_{3-i}')
\geq d_G(v,V_{3-i}) - \eps |V_{3-i}|  
\geq \frac{\alpha}{4}|V_{3-i}|~ .
\]

Next, we reveal the edges of $G'\sim G_{n,p}$ on $V(G_{\alpha})$. 
Then a.a.s. there exists an edge, say $xy$, in $V_1'$. 
Indeed, by Chernoff (Lemma~\ref{lem:Chernoff1}) and by the size of $V_1'$, we obtain that
\[
\Prob\left( e_{G'}(V_1') = 0 \right)
\leq \exp \left( -\frac{1}{3} \Exp\left( e_{G'}(V_1') \right) \right)
\leq e^{-\omega(1)}~ .
\]
From now on, we will condition on the existence of such an edge $xy$. We will describe a strategy for Maker, when playing on $G_{\alpha}\cup G'$, and show that she can follow that strategy until a copy of $H$ is created.

\medskip

{\bf Strategy description:}
Maker's strategy consists of two stages.

{\bf Stage~I} lasts exactly
$\frac{\alpha}{16}|V_2|+1$ rounds.
In the first round, Maker claims the edge $xy$. In the proceeding
$\frac{\alpha}{16}|V_2|$ rounds, Maker claims arbitrary edges from $E_G(y,V_2')$.

At the end of this stage, let $v_1=x$, $N_y=N_\maker(y,V_2')$ 
and set
$
V_1^{\ast}=\left\{v\in V_1':~ d_G(v,N_y)\geq \frac{\alpha}{8}|N_y| \right\}~ .
$

Afterwards, {\bf Stage II} lasts exactly $2\ell-1$ rounds. 
For any $k\leq 2\ell-1$, Maker does her $k^{\text{th}}$ move in Stage II as follows:
\begin{itemize}
\item If $v_k\in V_1$ then she fixes an arbitrary vertex 
\[
z\in
\begin{cases}
V_2' & ~ \text{if }k=1,\\
N_y& ~ \text{if }k\neq 1
\end{cases}
\]
such that $d_\breaker(z)\leq \sqrt{n}$ and $v_kz\in G\setminus \breaker$. 
\item If $v_k\in V_2$ then she fixes an arbitrary vertex $z\in V_1^{\ast}$
such that $d_\breaker(z)\leq \sqrt{n}$ and $v_kz\in G\setminus \breaker$.
\end{itemize}
Maker then claims the edge $v_kz$
and sets $v_{k+1}=z$.

\medskip

{\bf Strategy discussion:}
If Maker can follow her strategy, then 
$(v_1,v_2,\ldots,v_{2\ell + 1})$, with 
$v_{2\ell +1}=y$, forms a cycle of length $2\ell+1$ in her graph.
Hence, it remains to be shown that Maker can follow the proposed strategy.

Consider {\bf Stage I} first. Claiming $xy$ in round 1 is not a problem since Maker starts the game. Since $y\in V_1'$, we have 
$d_G(y,V_2')\geq \frac{\alpha}{4}|V_2|.$
Hence, Maker can easily claim 
$\frac{\alpha}{16}|V_2|$ edges from
$E_G(y,V_2')$
in the beginning of the game.

At the end of Stage I, observe that,
according to the slicing lemma (Lemma~\ref{lem:slicing}), 
the pair 
$(V_1',N_y)$ is $\eps'$-regular in $G$ with 
$\eps'=\frac{16\eps}{\alpha}$ and density at least 
$\frac{\alpha}{4}$.
In particular, less than $\eps' |V_1'|$ vertices from $V_1'$ have less than
$\frac{\alpha}{8}|N_y|$ neighbours in $N_y$.
Hence, $$|V_1^{\ast}|\geq (1-\eps')|V_1'|> (1-2\eps')|V_1|~ .$$

Next, let us look at {\bf Stage II}. 
Throughout this stage,
we have the bound 
$$e(\breaker)\leq 1 + \frac{\alpha}{16}|V_2| + (2\ell -1) < \frac{\alpha}{8}|V_i|$$ for $i\in [2]$.
Assume that Maker can follow her strategy until she reaches round $k$ of Stage~II. We explain now why she can follow 
the strategy for the $k^{\text{th}}$ move in Stage~II.

\medskip

If $k=1$, then $v_k=x\in V_1'$ and hence
$$
d_{G\setminus \breaker}(v_k,V_2')
\geq
d_{G}(v_k,V_2') - e(\breaker)
\geq
\frac{\alpha}{3}|V_1| - \frac{\alpha}{8}|V_i|
>
\frac{\alpha}{5}|V_i|
> e(\breaker)~ .
$$
Therefore, there exists a vertex $z$ as required for the strategy, even with $d_\breaker(z)=0$.

If $k\neq 1$ is odd, then by the strategy from the previous round, we know that $v_k\in V_1^{\ast}$ and $d_\breaker(v_k)\leq \sqrt{n}+1$. In this case we have
$$
d_{G\setminus \breaker}(v_k,N_y)
\geq
d_{G}(v_k,N_y) - d_\breaker(v_k)
\geq
\frac{\alpha}{8}|N_y| - (\sqrt{n}+1)
>
\frac{\alpha}{10}|N_y|
$$
for large enough $n$. Hence, there are more than
$\frac{\alpha}{10}|N_y|$ choices for the desired vertex $z$
if we ignore the constraint $d_\breaker(z)\leq \sqrt{n}$. However,
there must be a vertex fulfilling this constraint since
otherwise we have
$$
e(\breaker)>\frac{\alpha}{10}|N_y|\cdot \sqrt{n} \geq \frac{\alpha^2\eta}{160} n^{\frac{3}{2}} > \frac{\alpha}{8}|V_2|
$$
for large enough $n$, in contradiction to the upper bound on $e(\breaker)$ described earlier. 

If otherwise $k$ is even,
then by the strategy from the previous round, we know that $v_k\in N_y\subset V_2'$ and $d_\breaker(v_k)\leq \sqrt{n}+1$. In this case we have
\begin{align*}
d_{G\setminus \breaker}(v_k,V_1^{\ast})
& \geq
d_{G\setminus \breaker}(v_k,V_1) - |V_1\setminus V_1^{\ast}|
\geq
d_{G}(v_k,V_1) - d_\breaker(v_k) - |V_1\setminus V_1^{\ast}| \\
& >
\frac{\alpha}{3}|V_1| - (\sqrt{n}+1) - 2\eps'|V_1|
>
\frac{\alpha}{5}|V_1|
\end{align*}
for large enough $n$. Analogously to the previous case, 
using the upper bound on $e(\breaker)$, 
we can conclude that there must exist a vertex
$z\in N_{G\setminus \breaker}(v_k,V_1^{\ast})$
such that $d_\breaker(z)\leq \sqrt{n}$.
Hence, in any case, Maker can follow the proposed strategy.
\end{proof}

The previous result covers all odd cycles
except for $C_3$. For the sake of completeness, we will
discuss $C_3$ with the following proposition.
A \emph{book with $t$ pages} consists of $t$ triangles overlapping in a single edge.

\begin{prop}
\label{prop:book}
Let $H$ be a book, $\alpha>0$, $G_{\alpha}$ be a graph on $n$ vertices with $\delta(G_{\alpha})\geq \alpha n$, and $p = \omega(n^{-2})$.
Then a.a.s.~the following holds: playing an unbiased Maker-Breaker game on $G_{\alpha}\cup G_{n,p}$, Maker has a strategy to claim a copy of $H$.
\end{prop}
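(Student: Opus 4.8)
The plan is to mimic the proof of Proposition~\ref{prop:oddcycle}: extract a dense regular pair from $G_\alpha$, reveal $G'\sim\gnp$ to obtain a single ``wrong'' edge that will serve as the spine of the book, and then have Maker build the two radii of the book inside a large common neighbourhood. Concretely, apply Lemma~\ref{lem:reg-turan} to $G_\alpha$ (with $r=2$, $\delta=\alpha/2$, and $\eps$ a small constant) to get an $\eps$-regular pair $(V_1,V_2)$ of density at least $\tfrac{\alpha}{4}$ with $|V_1|,|V_2|\ge\eta n$, and pass to $V_i'=\{v\in V_i:\ d_{G_\alpha}(v,V_{3-i})\ge\tfrac{\alpha}{3}|V_{3-i}|\}$, so that $|V_i'|\ge(1-\eps)|V_i|$ and every $v\in V_i'$ has $\ge\tfrac{\alpha}{4}|V_{3-i}'|$ neighbours in $V_{3-i}'$. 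Now reveal $G'$: since $p=\omega(n^{-2})$ and $|V_1'|=\Omega(n)$, a Chernoff bound (Lemma~\ref{lem:Chernoff1}, exactly as in Proposition~\ref{prop:oddcycle}) gives that a.a.s.\ $G'$ contains an edge inside $V_1'$; moreover, using Lemma~\ref{lem:slicing} one checks that all but a $o(1)$-fraction of pairs $x,y\in V_1'$ satisfy $|W|\ge\rho n$, where $W:=N_{G_\alpha}(x,V_2')\cap N_{G_\alpha}(y,V_2')$ and $\rho=\rho(\alpha,\eta)>0$ is a constant. Hence a.a.s.\ there is a $G'$-edge $xy\subseteq V_1'$ with $|W|\ge\rho n$, and we condition on such an edge.

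Maker then treats $\{x,y\}$ as the spine. In Stage~I she claims $xy$ and then, over $\Theta(n)$ rounds, grows a linear star at $y$ inside $W$: she keeps claiming a free edge $yw$ with $w\in W$, producing $N_y:=N_{\maker}(y,W)$ with $|N_y|=c_0n$ for a small constant $c_0$. This is possible because $d_{G_\alpha}(y,W)=\Omega(n)$ while $e(\breaker)=O(n)$, and — exactly as in Proposition~\ref{prop:oddcycle} — Maker only ever picks $w$ with $d_{\breaker}(w)\le\sqrt n$, which exists since otherwise Breaker would have claimed $\gg n^{3/2}$ edges. In Stage~II she claims, for $i=1,\dots,t$, an edge $xw_i$ with the $w_i\in N_y$ pairwise distinct: since $x\in V_1'$, regularity of $(V_1,V_2)$ forces $d_{G_\alpha}(x,N_y)=\Omega(n)$, so there are $\Omega(n)$ candidates while only $t=O(1)$ rounds elapse, and Breaker blocks at most $t$ of them. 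At the end $\{xy\}\cup\{xw_i,yw_i:i\in[t]\}\subseteq\maker$ is a copy of the book with $t$ pages.

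The hard part will be Stage~II, and it is already present when $t=1$, i.e.\ for $C_3$. Unlike the long cycles of Proposition~\ref{prop:oddcycle}, a book has no internal flexibility: every page must be adjacent to both spine vertices, so Maker has no room to re-route around claimed edges. The danger is a Breaker who shadows throughout Stage~I — answering each $yw$ by claiming $xw$ — so that every $w\in N_y$ already has $xw\in\breaker$ and Stage~II cannot even start; the same obstruction blocks every ordering of the two radii. Overcoming this is the real content of the proof: one must exploit that Breaker cannot shadow all candidate spines simultaneously, since $G'$ contributes $\omega(1)$ wrong edges and, across the two sides of the regular structure, the pairings Breaker would need overlap, so Maker can force a conflict and slip a triangle through; alternatively, one can interleave the growth of $N_{\maker}(x,W)$ and $N_{\maker}(y,W)$ together with a counting argument forcing the two stars to intersect. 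Once the $t=1$ case is settled, extending to general $t$ is routine, as Maker only needs $O(t)$ further free edges at $x$ into the already-secured common neighbourhood.
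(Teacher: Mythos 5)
You correctly identify the fatal obstruction in the single-spine approach — Breaker shadowing each $yw$ with $xw$ (or pairing $xw\leftrightarrow yw$ over the common neighbourhood) so that no triangle through the fixed edge $xy$ is ever completed — but you do not overcome it. The two remedies you gesture at (``force a conflict and slip a triangle through'', ``a counting argument forcing the two stars to intersect'') are not carried out, and neither is obviously true: with a single prescribed spine the page-completing edges come in pairs $\{xw,yw\}$, and a pairing strategy for Breaker on these pairs defeats Maker outright, so some multiplicity of candidate spines is genuinely necessary, not just convenient. Your closing claim that ``extending to general $t$ is routine'' is also unjustified, since all $t$ pages of a book must share one spine; $t$ separate triangles do not suffice.

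The paper's proof resolves exactly this issue by abandoning the regularity/single-edge setup entirely. It takes the auxiliary graph $F$ consisting of two disjoint matchings $M_1,M_2$ of size $k=12t$ plus the complete bipartite graph between $V(M_1)$ and $V(M_2)$; since $m^{(2)}(F)=\tfrac12$, the result of Krivelevich, Sudakov, and Tetali gives a copy of $F$ in $G_\alpha\cup\gnp$ a.a.s.\ already for $p=\omega(n^{-2})$. Maker first secures $k/3$ edges of each matching (these are the many candidate spines), then among the $k^2/9$ edge-disjoint $4$-edge boards between pairs of secured matching edges at least $k^2/18$ are still untouched by Breaker; on each such board a trivial pairing strategy yields two adjacent edges and hence a triangle through one of the secured matching edges, and averaging over the $2k/3$ spines produces one spine lying in at least $k/12=t$ triangles, i.e.\ the book. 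If you want to salvage your approach, you would need to import this multiplicity — many disjoint spine candidates and edge-disjoint page boards — which is precisely the content your sketch is missing.
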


\begin{proof}
Let $H$ be a book with $t$ pages, $t\in\mathbb{N}$. 
For our strategy consider the following graph $F$:
take 2 vertex-disjoint matchings $M_1,M_2$ of size $k=12t$;
set $V(F)= V(M_1)\cup V(M_2)$ and
$$
E(F)=E(M_1) \cup E(M_2) \cup \left\{ vw:~ v\in V(M_1)~ \text{and} ~ w\in V(M_2)\right\}~ . 
$$ 
Then $m^{(2)}(F)=\frac{1}{2}$. Hence, using Theorem 2.1 from~\cite{KST_SmoothedAnalysis} we know that for $p=\omega(n^{-2})$
a.a.s. $G\sim G_{\alpha}\cup G_{n,p}$ contains a copy of $F$.
In the following we will show that, playing only on $F$, Maker has a strategy to occupy a copy of $H$.
At first Maker claims $\frac{k}{3}$ edges of each of the matchings $M_1$ and $M_2$. As Breaker in the meantime can claim at most $\frac{2k}{3}$ edges, Maker can easily do so.
Denote with $M_1'=\left\{e_1,\ldots,e_{\frac{k}{3}}\right\}$ and $M_2'=\left\{f_1,\ldots,f_{\frac{k}{3}}\right\}$ the submatchings of $M_1$ and $M_2$ claimed by Maker.
Afterwards, consider the $\frac{k^2}{9}$ edge-disjoint boards
$E_{i,j}=\left\{ vw:~ v\in V(e_i)~ \text{and} ~ w\in V(f_j)\right\}$ with $i,j\in \left[ \frac{k}{3} \right]$.
Since so far only $\frac{2k}{3}$ rounds have been played,
at least $\frac{k^2}{9} - \frac{2k}{3} \geq \frac{k^2}{18}$
of these boards are free of Breaker's edges.
On each of these boards, Maker now ensures to claim
two adjacent edges by a simple pairing strategy.
This way, Maker creates at least $\frac{k^2}{18}$ triangles,
each of which contains one of the edges from $M_1'\cup M_2'$.
By a simple averaging argument we conclude that at least one of these edges needs to be contained in at least
$\frac{k}{12}=t$ Maker's triangles, hence leading to a copy of $H$.
\end{proof}

Both propositions are optimal in terms of $p$.
However, when $\alpha>\frac12$, then playing on $G_\alpha$ is sufficient and we can set $p=0$, c.f.~Theorem~\ref{thm:hgame0}.
For the proof of this theorem we first prove a useful lemma that allows Maker to maintain regularity in her graph.
We will also require this in the proof of Theorem~\ref{thm:H-game_Maker} and, in fact, Theorem~\ref{thm:hgame0} will easily follow from the proof of Theorem~\ref{thm:H-game_Maker}.

\subsection{Maintaining regular pairs}
\label{subsec:maintain-regularity}

One of the ingredients for Maker's strategy 
in the $(1:1)$ Maker-Breaker $H$-game on $G_{\alpha}\cup G_{n,p}$ is the following lemma.
It roughly states, that if a game is played on the edge set of some graph $G$, in which a given pair $(A,B)$
of disjoint subsets of vertices is regular, then 
Maker can ensure to claim a subgraph for which the pair $(A,B)$ is still regular.

\begin{lemma}
\label{lem:regularity_Maker}
For every reals $0< \eps < \alpha <1$
with $\alpha > 8\eps$
the following holds provided $n$ is large enough.
Let $G=(A\cup B,E)$ be a bipartite graph with $|A|=|B|=n$
such that $(A,B)_G$ is
$\eps$-regular and has density $\alpha$.
Then Maker has a strategy to ensure
that in the $(1:1)$ Maker-Breaker game
on $E(G)$ she occupies a subgraph $\maker\subset G$
such that $(A,B)_\maker$ is $4\eps$-regular and has density $\frac{\alpha}{2}$.
\end{lemma}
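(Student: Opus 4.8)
The plan is to reduce Lemma~\ref{lem:regularity_Maker} to a balancing (discrepancy) statement for the $(1:1)$ game, and then to settle that statement with a standard Erd\H{o}s--Selfridge type potential strategy. For the reduction, recall that $\eps$-regularity of $(A,B)_G$ with density $\alpha$ means exactly that $\big|e_G(X,Y)-\alpha|X||Y|\big|\le\eps|X||Y|$ for all $X\subseteq A$, $Y\subseteq B$ with $|X|,|Y|\ge\eps n$. Hence, if Maker can guarantee that her final graph $\maker$ satisfies
\[
\Big|\,e_\maker(X,Y)-\tfrac12\,e_G(X,Y)\,\Big|\ \le\ \eps\,|X||Y|
\qquad\text{for all }X\subseteq A,\ Y\subseteq B\text{ with }|X|,|Y|\ge4\eps n,
\]
then combining this with $\eps$-regularity of $(A,B)_G$ via the triangle inequality gives $\big|d_\maker(X,Y)-\tfrac\alpha2\big|\le\tfrac32\eps$ for all such $X,Y$, so that for any two such pairs the densities differ by at most $3\eps<4\eps$; that is, $(A,B)_\maker$ is $4\eps$-regular. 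Moreover its density $d_\maker(A,B)$ equals $\lceil e_G(A,B)/2\rceil/n^2=\tfrac\alpha2+O(n^{-2})$, since the $(1:1)$ game on the finite board $E(G)$ ends only once all edges are claimed. So it suffices to exhibit a Maker strategy realising the displayed balancing bound. (The hypothesis $\alpha>8\eps$ enters only here: it ensures that the resulting density $\tfrac\alpha2$ still exceeds $4\eps$, so that $(A,B)_\maker$ is a non-degenerate regular pair, as needed for instance in Lemma~\ref{lem:slicing}.)

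For the balancing step, set $m_0=\lceil4\eps n\rceil$ and let
\[
\mathcal R\ =\ \big\{\,E_G(X,Y)\ :\ X\subseteq A,\ Y\subseteq B,\ |X|,|Y|\ge m_0\,\big\},
\]
so that $|\mathcal R|\le\binom{n}{m_0}^2\le2^{2n}$ and $e_G(X,Y)\le n^2$ for every $E_G(X,Y)\in\mathcal R$. Maker plays the two-sided Erd\H{o}s--Selfridge potential strategy with respect to $\mathcal R$: writing $D(S)$ for (the current number of Maker's edges in $S$) minus (the current number of Breaker's edges in $S$), and fixing $\lambda>0$ of order $\sqrt{\ln|\mathcal R|}\,/\,n$ (so $\lambda\to0$), she maintains the potential $\Phi=\sum_{S\in\mathcal R}\big((1+\lambda)^{D(S)}+(1+\lambda)^{-D(S)}\big)$ and in each of her turns claims a free edge minimising the resulting value of $\Phi$. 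The standard analysis of this strategy (see e.g.~\cite{BeckBook,HKSS_PosGames}) shows that with this choice of $\lambda$ Maker can keep $\Phi\le2^{O(n)}$ throughout, and since $\Phi\ge(1+\lambda)^{|D(S)|}$ for each $S$, this forces $|D(S)|=O(n/\lambda)=O(n^{3/2})$ for every $S\in\mathcal R$ at the end of the game. Using $e_\maker(X,Y)+e_\breaker(X,Y)=e_G(X,Y)$ once the board is exhausted, this says
\[
\Big|\,e_\maker(X,Y)-\tfrac12\,e_G(X,Y)\,\Big|\ =\ \tfrac12\big|D(E_G(X,Y))\big|\ =\ O\!\big(n^{3/2}\big)
\qquad\text{for all }X\subseteq A,\ Y\subseteq B\text{ with }|X|,|Y|\ge m_0.
\]

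Combining the two steps finishes the argument: if $|X|,|Y|\ge4\eps n$ then $|X|,|Y|\ge m_0$, so $E_G(X,Y)\in\mathcal R$ and $\big|e_\maker(X,Y)-\tfrac12e_G(X,Y)\big|=O(n^{3/2})$; on the other hand $|X||Y|\ge(4\eps n)^2=16\eps^2n^2$, and since $\eps>0$ is a fixed constant, $O(n^{3/2})\le\eps\cdot16\eps^2n^2\le\eps\,|X||Y|$ once $n$ is large, which is exactly the balancing bound required in the first paragraph. The one genuinely delicate point is the discrepancy step: $\maker$ has to look balanced on the \emph{exponentially large} family $\mathcal R$ of all large rectangles simultaneously, which is why a simple pairing strategy does not suffice and the potential-function strategy is needed; the numerology goes through only because the error one is allowed to commit, $\Theta(\eps^3n^2)$, is a polynomial factor larger than the $\Theta(n^{3/2})$ discrepancy that a balancing strategy can guarantee.
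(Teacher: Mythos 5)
Your proposal is correct and follows essentially the same route as the paper: both reduce the lemma to a balancing/discrepancy statement over the exponentially large family of large bipartite rectangles and win that auxiliary game with an exponential potential, exploiting that the allowed error $\Theta(\eps^3 n^2)$ dwarfs the achievable discrepancy $O\big(\sqrt{e(G)\log|\mathcal{R}|}\big)=O(n^{3/2})$. The only difference is presentational: the paper uniformizes the family (fixing subsets $H(S,T)$ of equal size inside rectangles with $|S|=|T|=\eps n$ and then extending to all larger sets by averaging) so that it can quote the Hefetz--Krivelevich--Szab\'o discrepancy-game theorem as a black box, whereas you run the underlying two-sided Erd\H{o}s--Selfridge potential argument directly on the non-uniform family of all large rectangles.
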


The above statement will follow easily from the 
following more general statement on {\em Discrepancy games} due to Hefetz, Krivelevich, and Szab\'o~\cite{HKS2007}. The general setup of such a game is as follows. Let a bias $b$, a constant $\eps>0$ and some hypergraph $(X,\mathcal{H})$ be given. The $(b,1,\mathcal{H})$ $\eps$-Discrepancy game is played by two players,
called Balancer and Unbalancer, who alternately
claim previously unclaimed elements of $X$. Balancer, starting the game, claims $b$ elements of $X$ in every round (except for maybe the last round when there are less then $b$ elements left), while Unbalancer always claims $1$ such element. Denote with $B$ the set of all elements claimed by Balancer by the end of the game.
Then Balancer is called the winner if and only if
\begin{equation}\label{eq:balanced}
\left| |B\cap A| - \frac{b}{b+1}|A| \right| < \eps |A|
\end{equation}
holds for every $A\in\mathcal{H}$.
The following theorem provides a general winning criterion for Balancer.

\begin{thm}[Theorem 1.5 in~\cite{HKS2007}]\label{thm:balancer}
Let $(X,\mathcal{H})$ be a $k$-uniform hypergraph.
If $$b<\frac{1}{3}\sqrt[3]{\frac{k}{\log(|\mathcal{H}|k)}} ~~ \text{and} ~~ \eps>3\sqrt{\frac{\log(|\mathcal{H}|k)}{kb}}$$
while $k$ is sufficiently large,
then Balancer has a winning strategy for 
the $(b,1,\mathcal{H})$ $\eps$-Discrepancy game.
\end{thm}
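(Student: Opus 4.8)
The plan is to equip Balancer with an explicit weight-function (potential) strategy, in the tradition of Beck's method, now in the two-sided discrepancy setting. Since $\mathcal H$ is $k$-uniform, fix $A\in\mathcal H$: if $u_A$ of its $k$ elements end up with Unbalancer then $|B\cap A|-\tfrac{b}{b+1}|A|=-(u_A-\tfrac{k}{b+1})$, so \eqref{eq:balanced} just asks, for every $A\in\mathcal H$, that the Unbalancer-count $u_A$ stay within $\eps k$ of $\tfrac{k}{b+1}$. During play write $m_A,u_A$ for the number of elements of $A$ so far taken by Balancer and Unbalancer and set $D_A:=u_A-\tfrac{1}{b+1}(u_A+m_A)$; an Unbalancer move using an element of $A$ shifts $D_A$ by $+\tfrac{b}{b+1}$, a Balancer move using an element of $A$ shifts it by $-\tfrac1{b+1}$, and at the end $D_A=u_A-\tfrac{k}{b+1}$ is exactly what must lie in $(-\eps k,\eps k)$. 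Fix a parameter $\lambda>0$ to be chosen later, let
\[
\Phi\;:=\;\sum_{A\in\mathcal H}\bigl(e^{\lambda D_A}+e^{-\lambda D_A}\bigr)\qquad(\text{so }\Phi=2|\mathcal H|\text{ initially}),
\]
and let Balancer play greedily: in each of his $b$ moves in a round he claims a free element $e$ minimising the resulting value of $\Phi$, which to leading order means claiming an element maximising $\mathrm{wt}(e):=\sum_{A\ni e}\bigl(e^{\lambda D_A}-e^{-\lambda D_A}\bigr)$, i.e.\ reinforcing the sets in which he has fallen behind most.

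The core of the proof is a one-round drift estimate showing that, under this strategy, $\Phi$ grows slowly; a round consists of Balancer's $b$ claims followed by one Unbalancer claim. The first-order (in $\lambda$) contributions cancel: claiming any element only decreases $\mathrm{wt}(\cdot)$ of every other element, so after Balancer's $b$ greedy claims no free element is worth more to Unbalancer than each of Balancer's picks was at the moment it was taken; hence Balancer's $b$ first-order gains, each of size $\tfrac{\lambda}{b+1}$, sum to at least the single first-order loss $\tfrac{\lambda b}{b+1}\,\mathrm{wt}(e^{*})$ that Unbalancer can inflict (the case $\mathrm{wt}(e^{*})\le 0$ being harmless). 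What remains is a second-order error: one Unbalancer move, of step $\tfrac{b}{b+1}$, costs $O(\lambda^2)$ times the local potential $\sum_{A\ni e^{*}}\bigl(e^{\lambda D_A}+e^{-\lambda D_A}\bigr)$, while Balancer's $b$ moves, each of step $\tfrac1{b+1}$, together cost only $O(\lambda^2/b)$ times a local potential. Summing the Unbalancer errors over the whole game costs $O(\lambda^2)\sum_A u_A$ (up to a boundedness factor for $\Phi$); here the invariant the strategy maintains — that $u_A$ stays near $\tfrac{k}{b+1}$ — gives $\sum_A u_A=O(k|\mathcal H|/b)$, which is made rigorous by a short bootstrap (consider the first moment $\Phi$ would exceed the target bound and contradict the drift estimate). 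Altogether $\log\Phi_{\mathrm{end}}\le\log(2|\mathcal H|)+O(\lambda^2 k/b)$, the $e^x-1\le x+x^2$-type linearisations being valid because the Unbalancer step $\tfrac{\lambda b}{b+1}$ is kept below an absolute constant.

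It then remains to optimise $\lambda$. A bound $\Phi_{\mathrm{end}}\le M$ forces $|D_A|\le\lambda^{-1}\log M$ for every $A$, so it suffices that $\lambda^{-1}\bigl(\log(2|\mathcal H|)+c\lambda^2 k/b\bigr)<\eps k$, i.e.\ $\tfrac{\log(2|\mathcal H|)}{\lambda}+c\tfrac{\lambda k}{b}<\eps k$. Taking $\lambda$ of order $\sqrt{b\log(|\mathcal H|k)/k}$ balances the two terms, making the left side of order $\sqrt{k\log(|\mathcal H|k)/b}$, which is below $\eps k$ precisely when $\eps>3\sqrt{\log(|\mathcal H|k)/(kb)}$; the requirement that the Unbalancer step stay below an absolute constant, together with the range of $\lambda$ just used, unravels (after absorbing the $\log(|\mathcal H|k)$ factors coming from the union bounds and crude counting of the free board) into $b<\tfrac13\bigl(k/\log(|\mathcal H|k)\bigr)^{1/3}$, with the constants $3$ and $\tfrac13$ produced by the linearisation inequalities and valid once $k$ is large. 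The main obstacle is this one-round drift estimate with its optimisation: making the first-order cancellation watertight (it genuinely uses that Balancer moves first and that $\mathrm{wt}(\cdot)$ is monotone under claiming, since the $D_A$ shift during Balancer's own $b$ moves), and then tracking the second-order error with the correct $b$-dependence — the factor $b$ gained there, coming from Unbalancer touching only a $\tfrac1{b+1}$-fraction of each set, is exactly what turns the trivial bound $\eps\gtrsim\sqrt{\log|\mathcal H|/k}$ into the claimed $\eps\gtrsim\sqrt{\log(|\mathcal H|k)/(kb)}$, and the clash of the two constraints on $\lambda$ is what forces the cube-root threshold for $b$. As a sanity check, a random Balancer combined with Azuma's inequality and a union bound over $\mathcal H$ already predicts exactly these thresholds, and the potential $\Phi$ above is the standard derandomisation of that heuristic.
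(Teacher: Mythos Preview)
The paper does not prove this theorem: it is quoted verbatim from Hefetz, Krivelevich, and Szab\'o~\cite{HKS2007} and used as a black box in the proof of Lemma~\ref{lem:regularity_Maker}. So there is no proof in the paper to compare your attempt against.

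That said, your sketch is in the right spirit and follows the approach actually taken in~\cite{HKS2007}: a two-sided exponential potential $\Phi=\sum_A(e^{\lambda D_A}+e^{-\lambda D_A})$ with a greedy Balancer, first-order cancellation via the monotonicity of the weights under Balancer's claims, second-order error control, and optimisation over~$\lambda$. A couple of places in your write-up are looser than they should be. First, your summation of the Unbalancer second-order errors as ``$O(\lambda^2)\sum_A u_A$ up to a boundedness factor for $\Phi$'' followed by invoking $u_A=O(k/b)$ via a bootstrap is circular as stated: you are using the conclusion ($u_A$ close to $k/(b+1)$) to bound the error terms needed to prove the conclusion. The clean way is to bound the per-round multiplicative growth of $\Phi$ directly (the local potential at $e^{*}$ is at most $\Phi$ itself, and one shows $\Phi_{\text{new}}\le(1+c\lambda^2)\Phi_{\text{old}}$ per round, with the game lasting $|X|/(b+1)$ rounds), which avoids the bootstrap entirely. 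Second, you should be explicit about where the extra factor of $k$ inside $\log(|\mathcal H|k)$ enters --- it comes from the game length bound $|X|\le k|\mathcal H|$ when you exponentiate the per-round factor, not from ``crude counting of the free board'' as an afterthought. With those two points tightened your outline matches the original proof.
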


With the above result in our hands, let us now turn to the proof of Lemma~\ref{lem:regularity_Maker}.

\begin{proof}[Proof of Lemma~\ref{lem:regularity_Maker}]

First note, that if $(A,B)_G$ has density $\alpha$,
then $(A,B)_\maker$ will have density $\frac{\alpha}{2}$
since Maker claims half of all edges.
Now, for every $S\subset A$, $T\subset B$ of size $\eps n$
we have
$
\alpha - \eps < d_G(S,T) < \alpha + \eps
$
and hence
$$  (\alpha - \eps)\eps^2n^2 < e_G(S,T) < (\alpha + \eps)\eps^2n^2 ~ .$$
For every such pair $(S,T)$ fix an arbitrary subset
$H(S,T)\subset e_G(S,T)$ of size 
$(\alpha - \eps)\eps^2n^2$, and let
$$
\mathcal{H} =
\Big\{
H(S,T):~ S\subset A,~ T\subset B ~ \text{of size }
\eps n
\Big\}~ .
$$
Maker plays as Balancer on the hypergraph 
$(X,\mathcal{H})$, where $X=\bigcup_{H(S,T)\in \mathcal{H}}H(S,T)$.
That is, whenever Breaker claims an edge belonging
to $X$, Maker (as Balancer) claims an edge according to the strategy for the 
$(1,1,\mathcal{H})$ $\eps$-Discrepancy game,
given by Theorem~\ref{thm:balancer}, for
$b=1$ and $k=(\alpha - \eps)\eps^2n^2$.
Whenever Breaker claims an edge not in $X$,
Maker does the same.

In order to see that Theorem~\ref{thm:balancer} can be applied, 
observe that $|\mathcal{H}| \leq \binom{n}{\eps n}^2 < 4^n$,
and hence
$$\frac{k}{\log(|\mathcal{H}|k)}
 = \Omega(n)
~~ \text{and} ~~
\frac{\log(|\mathcal{H}|k)}{kb} = O\left(\frac{1}{n}\right)~,
$$
which yields
$$b<\frac{1}{3}\sqrt[3]{\frac{k}{\log(|\mathcal{H}|k)}}
~~ \text{and} ~~
\eps>3\sqrt{\frac{\log(|\mathcal{H}|k)}{kb}}$$
provided $n$ is large enough.
Now, let $\maker$ denote Maker's graph at the end of the game. As a result of Maker's strategy, Maker ensures that
$$
\left| |E(\maker)\cap H(S,T)| - \frac{1}{2}|H(S,T)| \right| < \eps |H(S,T)| ~~ \text{for every }H(S,T)\in\mathcal{H},
$$
see inequality~(\ref{eq:balanced}).
From this, we obtain for every $S\subset A,~ T\subset B$ of size $\eps n$ that
\begin{align*}
e_\maker(S,T)
	& \geq |E(\maker)\cap H(S,T)|
	\geq \left(\frac{1}{2} - \eps \right)|H(S,T)|
\geq \left(\frac{1}{2} - \eps \right)(\alpha - \eps)\eps^2n^2\\
	& \Rightarrow
d_\maker(S,T) \geq \left(\frac{1}{2} - \eps \right)(\alpha - \eps)
	\geq \frac{1}{2}\alpha - 2\eps, ~ \text{and}\\
e_\maker(S,T)
	& \leq |E(\maker)\cap H(S,T)| + |E_G(S,T)\setminus H(S,T)|
	\leq \left(\frac{1}{2} + \eps \right)(\alpha - \eps)\eps^2n^2 + 2\eps^3 n^2\\
	& \Rightarrow
d_\maker(S,T) \leq \left(\frac{1}{2} + \eps \right)(\alpha - \eps)
	+ 2\eps \leq \frac{1}{2}\alpha + 3\eps, ~
\end{align*}
i.e.~$|d_\maker(S,T)-d_\maker(A,B)|<4\eps$. By a simple averaging argument the latter extends to all subsets $S,T$ of size at least $\eps n$. That is, $(A,B)_\maker$ is $4\eps$-regular with density at least $\frac\alpha2$. 
\end{proof}

Together with Lemma~\ref{lem:reg-turan} this is sufficient to prove Theorem~\ref{thm:hgame0}.
For Theorem~\ref{thm:H-game_Maker} Maker also needs to find many copies of small graphs within the random graph such that they can be combined using the regular pairs.

\subsection{Creating many $H$-copies on a random graph}
\label{subsec:many-H}

It is known (Theorem~16 in~\cite{NSS2016}) that for any graph $H$, which contains a cycle, there is a constant $C>0$ such that a.a.s.~the following holds when $p\geq Cn^{-1/m_2(H)}$: in the $(1:1)$ Maker-Breaker game on $G_{n,p}$,  
Maker has a strategy to occupy a copy of $H$. 

Another ingredient for Maker's strategy is to show that, if $p$ is slightly larger than mentioned above, Maker a.a.s.~has a strategy to occupy a copy of $H$ on every vertex set of almost linear size.

\begin{lemma}\label{lem:many-copies_Maker}
Let $H$ be any graph.
Then for every $\gamma>0$ there exists $\beta>0$ such that with $p \geq n^{-1/m_2(H) + \gamma}$ a random graph $G \sim G_{n,p}$ a.a.s.~satisfies the following property:
playing a $(1:1)$ Maker-Breaker game on $G$,
Maker has a strategy to occupy a subgraph of $G$
that has a copy of $H$ on every vertex set of size $n^{1-\beta}$.
\end{lemma}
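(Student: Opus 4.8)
The plan is to show the stated property holds by a union bound over all vertex subsets of size $m := n^{1-\beta}$, combined with the known result for the $H$-game on the random graph applied to the induced subgraph. For a fixed $S \subseteq V(G)$ with $|S| = m$, the induced graph $G[S]$ is itself distributed as $G_{m,p}$. If we choose $\beta$ small enough so that $p \geq n^{-1/m_2(H)+\gamma} \geq C m^{-1/m_2(H)}$ for the constant $C$ coming from Theorem~16 in~\cite{NSS2016} (this is possible since $p$ is a fixed power of $n$ with exponent strictly above $-1/m_2(H)$, and $m = n^{1-\beta}$ only changes the exponent by a factor $1-\beta$, which for small $\beta$ still keeps $p$ above $C m^{-1/m_2(H)}$ — here one uses $\gamma>0$ crucially and picks $\beta < \gamma \cdot m_2(H)$ or so), then Maker has a strategy to occupy a copy of $H$ in $G_{m,p}$ with probability $1 - o_m(1)$ as $m \to \infty$. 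The point, however, is that $o(1)$ is not enough: we need the failure probability for a \emph{single} $S$ to be much smaller than $\binom{n}{m}^{-1}$ so that the union bound goes through.

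The main obstacle is therefore upgrading the qualitative ``a.a.s.'' statement of~\cite{NSS2016} into a quantitative tail bound: we need that the probability that $G_{m,p}$ fails to be a Maker's win for the $H$-game is at most, say, $\exp(-\omega(m \ln n))$, or at least $e^{-m^{1+\delta}}$ for some $\delta>0$, which dominates $\binom{n}{m} \leq n^m = e^{m \ln n}$ since $m = n^{1-\beta}$ is polynomially large. The way to get such a bound is not to invoke~\cite{NSS2016} as a black box but to recall that its proof (like most such results) reduces Maker's win to a deterministic sufficient condition on the host graph — roughly, the existence of many edge-disjoint, or appropriately ``robustly placed'', copies of $H$ (or of a suitable auxiliary structure built from $H$) spread across the vertex set, so that a pairing-type or box-game argument lets Maker secure one. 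One then shows that $G_{m,p}$ fails this deterministic condition only with exponentially small probability: this is a standard second-moment / Janson-inequality / Chernoff computation, since the expected number of the relevant substructures is polynomial in $m$ (in fact a large power of $m$ when $p$ is a power of $n$ above the threshold), and their concentration is strong enough to give a tail of the form $e^{-m^{c}}$ for some $c>1$, provided $\beta$ is small. With such a bound in hand, a union bound over the at most $\binom{n}{m} \le e^{m\ln n} = e^{o(m^{1+c'})}$ choices of $S$ shows that a.a.s.\ \emph{every} $S$ of size $m$ induces a subgraph on which Maker wins the $H$-game, and playing these strategies independently on disjoint vertex sets — or rather, noting that for each fixed target set $S$ Maker simply plays her $G[S]$-strategy while treating all edges outside $G[S]$ as irrelevant — gives the claimed subgraph containing a copy of $H$ on every $m$-set.

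Concretely, the steps I would carry out are: (i) fix $\gamma$, and choose $\beta>0$ small enough that $n^{-1/m_2(H)+\gamma} \ge C' \cdot m^{-1/m_2(H)+\gamma/2}$ where $m = n^{1-\beta}$ and $C'$ absorbs the constant from~\cite{NSS2016}; (ii) isolate from the proof of~\cite[Theorem~16]{NSS2016} a deterministic property $\mathcal{Q}$ of an $m$-vertex graph $\Gamma$ (counting suitably many spread-out copies of an auxiliary graph) that guarantees Maker wins the $(1:1)$ $H$-game on $\Gamma$; (iii) show $\Prob[G_{m,p} \not\in \mathcal{Q}] \le \exp(-m^{1+\delta})$ for some $\delta = \delta(\beta,H) > 0$, via Chernoff/Janson on the count of those copies, where the almost-linear slack in $p$ (the $+\gamma$) is what makes the expected count super-polynomial and hence the deviation bound strong; (iv) take a union bound over all $\binom{n}{m}$ sets $S$, using $\binom{n}{m} \le n^m = e^{m\ln n} \ll e^{m^{1+\delta}}$; (v) conclude that a.a.s.\ $G_{n,p}$ is such that for every $m$-set $S$, $G[S] \in \mathcal{Q}$, so Maker — playing globally but with attention restricted to whichever $S$ she likes, or rather: the statement is that \emph{for every} $S$ she \emph{could} win, which is exactly the phrasing needed since Maker's final graph, built by following the single global strategy obtained by interleaving, will contain $H$ on every $S$ once we note the strategy for one $S$ uses only edges inside $S$ and different $S$'s overlap harmlessly. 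The subtle point in step (v) is that Maker must produce \emph{one} subgraph that simultaneously contains an $H$-copy on every $m$-set; but since her strategy for a fixed $S$ only ever claims edges of $G[S]$ and wins against \emph{any} Breaker play, she can in fact just run her $H$-game strategy on an arbitrary single $S_0$ — no, that only gives one copy. The honest resolution: the deterministic property $\mathcal{Q}$ should be strengthened so that Maker wins the \emph{``$H$-everywhere''} game on $\Gamma$ directly (e.g.\ via a box game where each box is the edge set of a potential $H$-copy and Maker must grab a winning set meeting every $m$-neighbourhood), and then steps (ii)–(iv) apply to that stronger $\mathcal{Q}$; the expected-count computation is unchanged in spirit. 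I expect step (iii)–(iv), i.e.\ getting a stretched-exponential tail strong enough to beat $\binom{n}{m}$, to be the genuine technical heart of the argument, and the reason one must reprove rather than cite~\cite{NSS2016}.
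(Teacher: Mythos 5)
Your plan has a genuine gap at exactly the point you flag in step (v), and the fix you sketch there does not close it. A union bound showing that for every $m$-set $S$ the graph $G[S]$ is a Maker's win for the $H$-game only tells you that Maker \emph{could} win on any single prescribed $S$ by devoting her whole play to it; the lemma demands one strategy whose single final graph contains a copy of $H$ on all $\binom{n}{m}$ sets simultaneously. Interleaving the per-set strategies is hopeless (there are exponentially many sets and only $O(n^2)$ rounds, and their demands conflict), and your proposed repair --- strengthening the deterministic property $\mathcal{Q}$ so that Maker wins an ``$H$-everywhere'' game directly --- is essentially a restatement of the lemma rather than a reduction of it. So the technical heart is not, as you predict, upgrading the tail bound of~\cite{NSS2016}; it is producing a single global strategy at all.

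The paper's route avoids game-theoretic union bounds entirely, via the random-strategy trick of Bednarska and \L uczak~\cite{BL2000}: Maker picks, in each round, a uniformly random edge of $G$ not picked before, claiming it if free and recording a \emph{failure} otherwise. After $M=n^{2-1/m_2(H)+\gamma/4}$ rounds the picked edges form a copy of $G(n,M)$, and since the board has $\Theta\left(n^{2-1/m_2(H)+\gamma}\right)$ edges, Markov's inequality shows that a.a.s.\ at most $n^{2-1/m_2(H)-\gamma/3}$ of the picks fail. What is then needed is a purely structural resilience statement (Corollaries~\ref{cor:BL2} and~\ref{cor:BL3}): a.a.s.\ in $G(n,M)$, after deleting \emph{any} $n^{2-1/m_2(H)-\gamma/3}$ edges, every vertex set of size $n^{1-\beta}$ still spans a copy of $H$. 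This is where the union bound over subsets lives, and it goes through because the proof of Lemma~4 in~\cite{BL2000} actually gives failure probability $\exp(-cM)$, which beats the $2^n$ choices of subset --- no requantification of any game result is needed. Since the randomized strategy succeeds a.a.s.\ against any fixed Breaker strategy, a deterministic strategy exists. If you want to salvage your outline, the quantitative tail you should be chasing is for this containment-after-deletion property of the induced subgraphs, not for Maker's win on $G_{m,p}$.
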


The proof of the above lemma will follow mostly the argument given
in the papers of Bednarska and \L uczak~\cite{BL2000},
as well as Stojakovi\'c and Szab\'o~\cite{SS2005}.
We start with the following lemma. For this, note that
$G(n,M)$ denotes the random graph model, where we pick a
graph on $n$ vertices and with exactly $M$ edges
uniformly at random.

\begin{lemma}[Lemma 4 in~\cite{BL2000}]
Let $H$ be any graph containing a cycle,
then there exist constants $\beta>0$
and $c>0$ such that for every large enough $n$ the following holds:
if $M = 2n^{2-1/m_2(H)}$
then with probability at least $1-\exp(-c M)$ each subgraph of $G \sim G(n,M)$
with $(1-\beta)M$ edges contains a copy of $H$.
\end{lemma}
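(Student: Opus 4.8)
The plan is to reveal $G\sim G(n,M)$ together with a uniformly random ordering $e_1,\dots,e_M$ of its edges, and to bound the probability of the bad event that some $(1-\beta)M$-edge subgraph of $G$ is $H$-free. If such a subgraph $F$ exists, then the $\beta M$ edges of $E(G)\setminus E(F)$ are exactly $\{e_i:i\in S\}$ for some $S\subseteq[M]$ with $|S|=\beta M$, so a union bound over $S$ gives $\Pr[\text{bad}]\leq\binom{M}{\beta M}\max_{S}\Pr[\{e_i:i\notin S\}\text{ is }H\text{-free}]$. The two points that make this work are: (a) there are only $\binom{M}{\beta M}=e^{O(\beta M)}$ choices of $S$, with no dependence on $n$ --- this is the reason for indexing the deleted edges by positions rather than by edges of $K_n$; and (b) for a fixed index set $S$, the set $\{e_i:i\notin S\}$ is distributed exactly as $G(n,(1-\beta)M)$, since taking the first $M$ edges of a uniformly random permutation of $E(K_n)$ and then keeping the coordinates outside a fixed set $S$ just produces a uniformly random $(1-\beta)M$-subset of $E(K_n)$. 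Hence $\Pr[\text{bad}]\leq\binom{M}{\beta M}\cdot\Pr[H\not\subseteq G(n,(1-\beta)M)]$.

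It then remains to show that $\Pr[H\not\subseteq G(n,(1-\beta)M)]\leq e^{-c_0 M}$ for some constant $c_0=c_0(H)>0$; choosing $\beta$ small enough that $\binom{M}{\beta M}\leq e^{\beta M(1+\ln(1/\beta))}\leq e^{c_0M/2}$ then finishes the proof with $c:=c_0/2$. Write $q=(1-\beta)M/\binom n2=\Theta(n^{-1/m_2(H)})$. Since $H$ contains a cycle one has $m(H)<m_2(H)$ (a short computation: $m_2(H)>1$ because $d_2$ of the shortest cycle exceeds $1$, while for any $H''\subseteq H$ with an edge, $e(H'')/v(H'')\leq d_2(H'')\leq m_2(H)$ when $v(H'')\geq3$, strictly unless $H''$ is a matching, and a matching has $d_2\leq1<m_2(H)$). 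Therefore $q=\omega(n^{-1/m(H)})$, so $G(n,(1-\beta)M)$ sits polynomially above the appearance threshold for $H$, and the expected number of copies of $H$ is $\mu=\Theta(n^{v(H)}q^{e(H)})$, which tends to infinity and, using $m_2(H)\geq d_2(H)$, also satisfies $\mu=\Omega(M)$.

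To convert this into an exponentially small probability of having no copy of $H$ I would apply Janson's inequality (transferring from $G(n,M)$ to $G(n,q)$ in the standard way, which costs only a $\mathrm{poly}(n)=e^{o(M)}$ factor). The relevant quantity is $\bar\Delta=\sum\Pr[C\cup C'\subseteq G(n,q)]$, the sum over ordered pairs of distinct, edge-overlapping copies of $H$. Grouping the pairs by their edge-intersection $J$ (a subgraph of $H$ with $e(J)\geq1$) gives $\bar\Delta=\Theta\big(\max_{J}n^{2v(H)-v(J)}q^{2e(H)-e(J)}\big)$, and the maximum is attained when $J$ is a single edge --- this is exactly the inequality $d_2(J)\leq m_2(H)$ rewritten --- so $\bar\Delta=\Theta(n^{2v(H)-2}q^{2e(H)-1})$ and hence $\mu^2/\bar\Delta=\Theta(n^2q)=\Theta(M)$. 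Janson's inequality then yields $\Pr[H\not\subseteq G(n,(1-\beta)M)]\leq\exp\!\big(-\Omega(\min(\mu,\mu^2/\bar\Delta))\big)=e^{-c_0M}$, which is what was needed.

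The only step that calls for genuine care is the estimate of $\bar\Delta$: one must check that no denser intersection pattern $J$ overtakes the single-edge term, and this is precisely what the definition $m_2(H)=\max_{H'\subseteq H}d_2(H')$ guarantees. Everything else --- the ordered-reveal trick, the distributional identity in (b), the bound $\binom{M}{\beta M}\leq e^{\beta M(1+\ln(1/\beta))}$, and the final choice of $\beta$ --- is routine bookkeeping.
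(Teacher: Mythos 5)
Your proposal is correct, but note that the paper does not actually prove this statement: it is quoted verbatim as Lemma~4 of Bednarska and \L uczak \cite{BL2000}, with only the remark that a closer inspection of their proof upgrades the success probability from $\tfrac{2}{3}$ to $1-\exp(-cM)$. So the comparison here is with the cited source rather than with an in-paper argument. Your reconstruction follows the same route as the original: a union bound over the $\binom{M}{\beta M}$ possible positions of the deleted edges, reducing the problem to an exponential lower-tail estimate for the number of $H$-copies in $G(n,(1-\beta)M)$, which is supplied by the generalized Janson inequality (this lower-tail estimate is the separate Lemma~3 of \cite{BL2000}). You correctly identify the two load-bearing points: the ordered-reveal/exchangeability argument that makes the remaining edges distributed as $G(n,(1-\beta)M)$ for each fixed index set $S$ (without which one would be forced to union-bound over subsets of $E(K_n)$, costing a fatal $e^{\Theta(M\log n)}$ factor), and the verification that the single-edge intersection dominates $\bar\Delta$, which is exactly the inequality $d_2(J)\le m_2(H)$; your computations $\mu=\Omega(M)$ via $m_2(H)\ge d_2(H)$ and $\mu^2/\bar\Delta=\Theta(n^2q)=\Theta(M)$ are right. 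One cosmetic point: since $c_0$ nominally depends on $\beta$ through $q=(1-\beta)M/\binom n2$, you should fix $c_0$ uniformly over, say, $\beta\in(0,\tfrac12]$ (monotonicity of the $H$-free probability in the edge count does this) before choosing $\beta$ small enough that $\binom{M}{\beta M}\le e^{c_0M/2}$; this is routine and does not affect the argument.
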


Note that in \cite[Lemma~4]{BL2000}
the probability of the good event happening is stated to be at least $\frac{2}{3}$; 
however with a closer look at the proof one actually sees 
that this probability is at least $1-\exp(-cM)$ for some positive constant $c$.
Further note that if we increase the number of edges of the random graph, 
but still delete at most the same number $2\beta n^{2-1/m_2(H)}$ of edges, 
then it does not become less likely to find copies of $H$. In particular, we obtain the following.

\begin{cor}\label{cor:BL}
Let $H$ be any graph containing a cycle,
then there exist constants $\beta>0$ and $c>0$
such that for every large enough $n$ the following holds:
if $M \geq 2n^{2-1/m_2(H)}$
then with probability at least $1-\exp(-c n^{2-1/m_2(H)})$ each subgraph of $G \sim G(n,M)$
with $M - \beta n^{2-1/m_2(H)}$ edges contains a copy of $H$.
\end{cor}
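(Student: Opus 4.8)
The plan is to deduce this from \cite[Lemma~4]{BL2000} (stated just above) by a short coupling-and-monotonicity argument, with no new probabilistic input. Write $M_0:=2n^{2-1/m_2(H)}$ and let $\beta>0$ and $c>0$ be the constants provided by that lemma, so that with probability at least $1-\exp(-cM_0)$ a graph $G_0\sim G(n,M_0)$ has the property $\mathcal{P}_0$ that every subgraph of $G_0$ with $(1-\beta)M_0$ edges contains a copy of $H$. I claim that the same $\beta$ and $c$ work for the corollary; note already that $\exp(-cM_0)=\exp(-2cn^{2-1/m_2(H)})\le\exp(-cn^{2-1/m_2(H)})$, which gives the stated bound.

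First I would set up the coupling: fix $M$ with $M_0\le M\le\binom{n}{2}$, generate $G\sim G(n,M)$, then pick a uniformly random $M_0$-element subset $E_0\subseteq E(G)$ and set $G_0:=(V(G),E_0)$. Since every fixed $M_0$-edge graph arises from the same number of pairs $(G,E_0)$, the graph $G_0$ is distributed as $G(n,M_0)$. The heart of the proof is then the deterministic implication: \emph{if $G_0$ has property $\mathcal{P}_0$, then every subgraph $G'$ of $G$ with exactly $M-\beta n^{2-1/m_2(H)}=M-\tfrac{\beta}{2}M_0$ edges contains a copy of $H$.} Indeed, $|E(G)\setminus E(G')|=\tfrac{\beta}{2}M_0$, so $F_0:=E_0\cap E(G')$ satisfies $|F_0|\ge|E_0|-\tfrac{\beta}{2}M_0\ge(1-\beta)M_0$; deleting the surplus edges from $(V(G'),F_0)$ yields a subgraph of $G_0$ with exactly $(1-\beta)M_0$ edges, which contains a copy of $H$ by $\mathcal{P}_0$, and this copy lies inside $G'$. (Keeping the vertex set $V(G')$ fixed throughout sidesteps any issue caused by isolated vertices of $H$.) Taking the contrapositive, whenever $G$ fails the robustness statement of the corollary the graph $G_0$ fails $\mathcal{P}_0$, and this holds for \emph{every} choice of $E_0$; hence in the joint probability space the bad event for $G$ is contained in the bad event for $G_0$, so $\Prob[G\text{ bad}]\le\Prob[G_0\text{ bad}]\le\exp(-cM_0)\le\exp(-cn^{2-1/m_2(H)})$, as required.

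I do not expect a genuine obstacle here: the argument is just a sandwiching of $G(n,M_0)$ inside $G(n,M)$ combined with the trivial fact that passing to a subgraph cannot create copies of $H$. The one point to watch is the bookkeeping of the deletion budget — an adversary who removes $\tfrac{\beta}{2}M_0$ edges from $G$ can destroy at most $\tfrac{\beta}{2}M_0$ of the $M_0$ edges of $G_0$, leaving at least $(1-\beta)M_0$ of them, which is precisely the amount $\mathcal{P}_0$ needs (so one even has slack, and could afford the full budget $2\beta n^{2-1/m_2(H)}$). The hypothesis that $H$ contains a cycle is inherited from \cite[Lemma~4]{BL2000} and is used only there.
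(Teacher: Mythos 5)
Your proposal is correct and is exactly the formalization of the paper's own (one-line) justification: the paper derives the corollary from \cite[Lemma~4]{BL2000} by the same monotonicity observation, merely remarking that increasing the number of edges while keeping the deletion budget at most $2\beta n^{2-1/m_2(H)}$ cannot hurt. Your explicit coupling of $G(n,M_0)$ inside $G(n,M)$ and the bookkeeping showing the adversary destroys at most $\tfrac{\beta}{2}M_0$ of the $M_0$ coupled edges are precisely the details the paper leaves implicit.
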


In fact, if the number of edges is increased slightly in the order of magnitude, then we can even find copies of $H$
on every vertex set of almost linear size.

\begin{cor}~\label{cor:BL2}
Let $H$ be any graph containing a cycle.
Then for every $\gamma>0$ there exists a constant 
$\beta>0$ such that the following holds a.a.s.~in $G \sim G(n,M)$ with
$M \geq n^{2-1/m_2(H) + \gamma/4}$:

for every vertex subset $A\subset [n]$ of size $n^{1-\beta}$
and every subgraph $F\subset G$ with $e(F)\leq n^{2-1/m_2(H) - \gamma/3}$
it holds that
$(G\setminus F)[A]$ contains a copy of $H$.
\end{cor}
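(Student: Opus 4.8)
The plan is to deduce this from Corollary~\ref{cor:BL} by passing to a small vertex set and then union-bounding over all such sets. Fix $\gamma>0$ and let $\beta_0>0$, $c>0$ be the constants that Corollary~\ref{cor:BL} supplies for $H$. Since $H$ contains a cycle we have $m_2(H)>1$; write $\tau:=2-1/m_2(H)$, so $\tau>1$, and set $m:=n^{1-\beta}$, where the constant $\beta>0$ is chosen at the very end small enough (in terms of $\gamma$ and $m_2(H)$) that the exponent inequalities below all hold. The whole idea is that $m^{\tau}=n^{(1-\beta)\tau}$ carries an exponent strictly larger than $1-\beta$, so a failure probability of the shape $\exp(-\Omega(m^{\tau}))$ will comfortably beat the number $\binom{n}{m}\le e^{m\ln n}$ of candidate sets $A$.

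First I would record two elementary size estimates, valid for every fixed $A$ with $|A|=m$ once $n$ is large. (i) The induced edge count $e(G[A])$ has a hypergeometric distribution with mean $M\binom{m}{2}/\binom{n}{2}\ge n^{\tau+\gamma/4-2\beta-o(1)}$, which for $\beta$ small enough relative to $\gamma\,m_2(H)$ exceeds $4m^{\tau}$; hence by Lemma~\ref{lem:hypergeometric} the probability $\Prob\!\left(e(G[A])<2m^{\tau}\right)$ is super-exponentially small in $m$, in particular much smaller than $e^{-m\ln n}$. (ii) Provided $\beta\tau<\tfrac13\gamma$, we have $\beta_0 m^{\tau}=\beta_0 n^{(1-\beta)\tau}\ge n^{\tau-\gamma/3}\ge e(F)$ for every $F$ admissible in the statement; moreover $(G\setminus F)[A]=G[A]\setminus F'$ with $F':=F\cap\binom{A}{2}$ and $e(F')\le e(F)\le\beta_0 m^{\tau}$, so the graphs we must delete always lie within the ``deletion budget'' of Corollary~\ref{cor:BL}.

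Now the core step. In the $G(n,M)$ model, conditioned on $e(G[A])=M'$, the graph $G[A]$ is distributed exactly as $G(m,M')$. Hence, whenever $M'\ge 2m^{\tau}$, Corollary~\ref{cor:BL} applied with $m$ in place of $n$ gives that, with probability at least $1-\exp(-c m^{\tau})$, every subgraph of $G[A]$ with at least $M'-\beta_0 m^{\tau}$ edges contains a copy of $H$; in particular $G[A]\setminus F'$ does, for every $F'$ with $e(F')\le\beta_0 m^{\tau}$. Combining this with estimate (i), the probability (for a fixed $A$) that some admissible $F$ leaves $(G\setminus F)[A]$ free of copies of $H$ is at most $\Prob(e(G[A])<2m^{\tau})+\exp(-cm^{\tau})$. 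A union bound over the $\binom{n}{m}\le e^{m\ln n}$ choices of $A$ then yields
\[
\Prob(\text{the conclusion fails})\ \le\ e^{m\ln n}\cdot\bigl(\,\Prob(e(G[A])<2m^{\tau})+\exp(-cm^{\tau})\,\bigr)\ =\ o(1),
\]
because $\tau>1$ forces $m^{\tau}=n^{(1-\beta)\tau}\gg n^{1-\beta}\ln n=m\ln n$. Fixing $\beta$ to satisfy the finitely many constraints used above finishes the argument.

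I expect the only genuinely delicate part to be the exponent bookkeeping that makes the competing quantities line up: the typical induced edge count must exceed $2m^{\tau}$ with exponential slack (forcing $\beta$ small relative to $\gamma\,m_2(H)$), the budget $\beta_0 m^{\tau}$ must absorb $e(F)\le n^{\tau-\gamma/3}$ (forcing $\beta\tau<\tfrac13\gamma$), and $\exp(-\Omega(m^{\tau}))$ must beat $\binom{n}{m}$ — this last point is precisely where the strict inequality $m_2(H)>1$, i.e.\ $H$ containing a cycle, is essential. Everything else is a routine marriage of Corollary~\ref{cor:BL} with a conditioning argument and a hypergeometric tail bound, in the spirit of Bednarska and \L uczak~\cite{BL2000}.
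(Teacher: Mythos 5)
Your proposal is correct and follows essentially the same route as the paper's proof: fix $A$, use the hypergeometric tail bound to show $e(G[A])$ is large enough, condition so that $G[A]\sim G(m,M')$ and invoke Corollary~\ref{cor:BL} on the smaller vertex set, then union-bound over all choices of $A$, with the strict inequality $m_2(H)>1$ making the exponents work out. The only differences are cosmetic (bounding the number of sets $A$ by $\binom{n}{m}$ rather than $2^n$, and thresholding $e(G[A])$ at $2m^{\tau}$ rather than concentrating it multiplicatively around its mean).
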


\begin{proof}
Let constants $\beta$ and $c$ be chosen such that Corollary~\ref{cor:BL} can be applied and such that 
$0< \beta < \frac{\gamma}{12}$ and 
$(1-\beta)\left(2-\frac{1}{m_2(H)}\right)>1+\beta$.
To see that the latter is possible, note that $m_2(H)>1$ by the assumption on $H$.
From now on, whenever necessary, assume $n$ to be large enough.

Let $A\subset [n]$ be any vertex subset of size $n^{1-\beta}$,
and let $\E_A$ be the event
that, when generating $G \sim G(n,M)$, 
there exists a subgraph $F$ with 
$e(F)\leq n^{2-1/m_2(H) - \gamma/3}$
such that
$(G\setminus F)[A]$ does not contain a copy of $H$.
We will show in the following that
\begin{equation}\label{event_A}
\Prob(\E_A) \leq 2e^{-n^{1+\beta}}~ ;
\end{equation}
with a union bound over all choices for $A$ (the number of which is bounded by $2^n$)
the corollary then follows.
In order to prove inequality~(\ref{event_A}), let us observe first that
\begin{equation}\label{edges_A}
(1 - \beta)Mn^{-2\beta} \leq e_G(A) \leq (1 + \beta)Mn^{-2\beta}
\end{equation}
holds with probability at least $1-\exp(-n^{1+\beta})$. Indeed, the random variable
$e_G(A)$ has hypergeometric distribution 
with expectation
$$
\Exp(e_G(A))=M\cdot \frac{\binom{|A|}{2}}{\binom{n}{2}} 
= (1-o(1))
M\cdot \frac{|A|^2}{n^2}
= (1-o(1))Mn^{-2\beta}~ .
$$
Then, with Lemma~\ref{lem:hypergeometric}, we get
$$
\Prob\left( 
|e_G(A) - Mn^{-2\beta}| > \beta Mn^{-2\beta} \right)
\leq e^{-\beta^2Mn^{-2\beta}/4} < e^{-n^{1+\beta}}~ ,
$$
where for the last inequality we use that 
$2-\frac{1}{m_2(H)}>1$ and $\frac{\gamma}{4}-2\beta>\beta$.
Next, if we condition on (\ref{edges_A}), we obtain that
$G[A]$ is distributed according to $G(N,M^\ast)$ with
$N=n^{1-\beta}$ and 
\begin{align*}
M^\ast \geq (1 - \beta - o(1))Mn^{-2\beta} 
\geq N^{2-1/m_2(H) +\beta}~ ,
\end{align*}
where for the last inequality we use that 
$\frac{\gamma}{4}-2\beta>\beta$ and $n> N$.
It follows from Corollary~\ref{cor:BL} that with probability
at least 
$1-\exp(-c N^{2-1/m_2(H)}) \geq 1- \exp(-n^{1+\beta})$ each subgraph of $G[A]$
with $e_G(A) - \beta N^{2-1/m_2(H)}$ edges contains a copy of $H$.
Since  $n^{2-1/m_2(H) - \gamma/3} < \beta N^{2-1/m_2(H)}$ holds
for large $n$ by having $\beta < \frac{\gamma}{12}$, this in particular means 
that $(G\setminus F)[A]$ contains a copy of $H$ for every 
graph $F$ satisfying $e(F)\leq n^{2-1/m_2(H) - \gamma/3}$.
Hence, inequality~(\ref{event_A}) is proven.
\end{proof}

If $H$ does not contain a cycle, we can get the same conclusion.

\begin{cor}~\label{cor:BL3}
	Let $H$ be any graph with $m_2(H)=1$.
	Then for every $\gamma>0$ there exists a constant 
	$\beta>0$ such that the following holds a.a.s.~in $G \sim G(n,M)$ with
	$M \geq n^{1 + \gamma/4}$:

	for every vertex subset $A\subset [n]$ of size $n^{1-\beta}$
	and every subgraph $F\subset G$ with $e(F)\leq n^{1 - \gamma/3}$
	it holds that
	$(G\setminus F)[A]$ contains a copy of $H$.
\end{cor}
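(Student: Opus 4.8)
The plan is to follow the structure of the proof of Corollary~\ref{cor:BL2}, but since $H$ is now acyclic we cannot feed it into \cite[Lemma~4]{BL2000} (which requires a cycle); instead I would replace that input by an elementary greedy embedding of forests into dense graphs.

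First I would note that $m_2(H)=1$ forces $H$ to be a forest with at least one edge: by definition every subgraph $H'\subseteq H$ satisfies $e(H')\le v(H')-1$ (for $v(H')\ge 3$ this is precisely $\tfrac{e(H')-1}{v(H')-2}\le 1$, and for $v(H')\le 2$ it is trivial), which characterises forests, while $m_2(H)\neq 0$ rules out the edgeless case. Set $h:=v(H)\ge 2$. The deterministic ingredient I would isolate is: \emph{any graph $J$ on $N\ge h$ vertices with $e(J)>(h-1)N$ contains a copy of $H$}. Indeed, such a $J$ has a nonempty subgraph of minimum degree at least $h-1$ (iteratively remove vertices of degree at most $h-2$; this deletes fewer than $(h-1)N$ edges, hence cannot exhaust $J$), and one then embeds $H$ greedily along an ordering $u_1,\dots,u_h$ of $V(H)$ in which each $u_i$ has at most one neighbour among $u_1,\dots,u_{i-1}$ (such an ordering exists because $H$ is a forest), placing $u_i$ on any unused neighbour of the image of its unique earlier neighbour — possible since that image has at least $h-1$ neighbours and at most $h-2$ of them are already occupied.

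The probabilistic part is then as in Corollary~\ref{cor:BL2}. Fixing $A\subseteq[n]$ with $|A|=N:=n^{1-\beta}$, the count $e_G(A)$ is hypergeometric with mean $(1-o(1))Mn^{-2\beta}$, so Lemma~\ref{lem:hypergeometric} gives
\[
\Prob\!\left(e_G(A)<\tfrac12 Mn^{-2\beta}\right)\ \le\ \exp\!\left(-\Omega\!\left(n^{-4\beta}M\right)\right)\ \le\ \exp\!\left(-n^{1+\beta}\right)
\]
once $\beta$ is chosen small enough (e.g.\ $\beta<\gamma/20$, using $M\ge n^{1+\gamma/4}$). A union bound over the at most $2^n$ choices of $A$ shows that a.a.s.\ $e_G(A)\ge \tfrac12 Mn^{-2\beta}$ for all of them.

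Finally I would condition on this event and combine the two ingredients: for any such $A$ and any $F\subseteq G$ with $e(F)\le n^{1-\gamma/3}$ the graph $J:=(G\setminus F)[A]$ satisfies
\[
e(J)\ \ge\ \tfrac12 Mn^{-2\beta}-n^{1-\gamma/3}\ \ge\ \tfrac13\, n^{1+\gamma/4-2\beta}\ >\ (h-1)\, n^{1-\beta}\ =\ (h-1)N
\]
for $n$ large, where the middle inequality uses $\gamma/4-2\beta>-\gamma/3$ and the last uses $\gamma/4-2\beta>-\beta$, both valid for $\beta<\gamma/20$. The deterministic fact then produces a copy of $H$ inside $J$, which is the assertion. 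The only place to be careful is the bookkeeping of exponents that makes both the deleted set $F$ and the target size $N$ negligible against the number of edges of $G$ inside $A$; beyond that there is no genuine obstacle, and in fact the forest case is easier than the cyclic case of Corollary~\ref{cor:BL2}.
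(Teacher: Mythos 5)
Your proposal is correct, but it takes a genuinely different route from the paper. The paper proves Corollary~\ref{cor:BL3} by a short reduction to Corollary~\ref{cor:BL2}: it forms $H'$ by adding to $H$ a disjoint cycle of length $\lceil 8/\gamma+2\rceil$, so that $H'$ contains a cycle while $m_2(H')\le 1+\gamma/8$ and hence $2-1/m_2(H')\le 1+\gamma/8$; applying Corollary~\ref{cor:BL2} to $H'$ with parameter $\gamma/2$ then yields a copy of $H'\supseteq H$ in $(G\setminus F)[A]$, and the exponent bookkeeping $n^{1-\gamma/3}\le n^{2-1/m_2(H')-\gamma/6}$ closes the argument. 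You instead bypass \cite[Lemma~4]{BL2000} entirely: you observe that $m_2(H)=1$ forces $H$ to be a forest with an edge, replace the counting lemma by the elementary fact that any $N$-vertex graph with more than $(v(H)-1)N$ edges contains a subgraph of minimum degree $v(H)-1$ and hence a greedy embedding of $H$, and then only need the hypergeometric lower tail plus a union bound over the $2^n$ sets $A$. Both arguments are sound; your verification of the claim $m_2(H)=1\Rightarrow H$ is a forest with an edge, the degeneracy/greedy embedding step, and the exponent inequalities (all satisfied for $\beta<\gamma/20$) check out. The paper's reduction is shorter given that Corollary~\ref{cor:BL2} is already in place and keeps a single template for both corollaries; your argument is self-contained, avoids the slightly artificial cycle-attachment device, and makes transparent that in the acyclic case the conclusion is really just an average-degree statement, so one could even afford a larger $\beta$ (anything below $\gamma/4$, up to constants) than the generic $\beta$ inherited from Corollary~\ref{cor:BL2}.
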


\begin{proof}
	Let $H$ be any graph with $m_2(H)=1$ and let $\gamma>0$.
	We construct a graph $H'$ by adding a disjoint cycle of length $\left\lceil \frac{8}{\gamma}+2 \right\rceil$ to $H$ and note that $m_2(H') \le 1+\frac{\gamma}{8}$ and, therefore, $2-\frac{1}{m_2(H')} \le 1+  \frac{\gamma}{8}$.
	Now we let $\beta>0$ be given by Corollary~\ref{cor:BL2} with input $H'$ and $\frac{\gamma}{2}$.
	Then a.a.s.~in $G(n,M)$ with $M \ge n^{1+\gamma/4} \ge n^{2-1/m_2(H')+\gamma/8}$ for every vertex subset $A \subseteq [n]$ of size $n^{1-\beta}$ and every subgraph $F \subseteq G$ with $e(F) \le n^{2-1/m_2(H')-\gamma/6}$ it holds that $(G\setminus F)[A]$ contains a copy of $H'$ and, thus, also of $H$.
	Since $n^{1-\gamma/3} \le  n^{2-1/m_2(H')-\gamma/6}$ the statement follows.
\end{proof}

Using Corollaries~\ref{cor:BL2} and~\ref{cor:BL3} we finally can prove
Lemma~\ref{lem:many-copies_Maker}.

\begin{proof}[Proof of Lemma~\ref{lem:many-copies_Maker}]
The proof idea for the lemma is 
similar to the one
given 
by Bednarska and \L uczak~\cite{BL2000},
or Stojakovi\'c and Szab\'o~\cite{SS2005}
for the discussion of a Maker's 
strategy in the $H$-game.
We will prove that a.a.s.~Maker has a strategy for
occupying a graph as desired
when playing on a random graph $G' \sim G(n,M')$
where $M'=p\binom{n}{2}$. The result then follows for $G_{n,p}$ as the property we are looking for is monotone increasing (see e.g.~Proposition~1.12 in~\cite{JLR2000}).

Maker's strategy is to play randomly. That is, in each of her moves, Maker takes an edge from $G'$ uniformly at random from all the edges she has not taken in previous rounds. If this edge is free, then she claims it; otherwise she declares her move as a failure and simply skips her move. We will show that against any fixed Breaker's strategy this random strategy a.a.s. leads to a subgraph $H$. From this it then follows that a.a.s.~there must also exist a deterministic strategy for winning the $H$-game (see e.g.~\cite{BL2000,HKSS_PosGames}).

In order to prove that Maker succeeds a.a.s.~let us consider only the first $M=n^{2-1/m_2(H)+\gamma/4}$ rounds of the game. At the end of the $M^{\text{th}}$ round we have that
all edges taken by Maker (but not necessarily being claimed by her due to a failure) 
form a graph distributed from $G(n,M)$. 
Hence,
using Corollaries~\ref{cor:BL2} and~\ref{cor:BL3}, it is enough to prove that a.a.s.~we have at most $n^{2-1/m_2(H)-\gamma/3}=n^{-7\gamma/12}M$ failures by the end of round $M$.

In order to see that the number of failures can be bounded this way, notice that the board size is $e(G')=M'\geq \frac{1}{3} n^{2-1/m_2(H)+\gamma}$ and hence, until round $M$, each round happens to be a failure with probability at most
$$
\frac{M}{M'-M} \leq 4n^{-3\gamma/4}~ .
$$
Therefore, the expected number of failures up to round $M$ can be bounded from above by
$4n^{-3\gamma/4}M$. Applying Markov's inequality~(Lemma~\ref{lem:markov}) we thus obtain that with probability at most $4n^{-\gamma/6}$ there happen to be more than
$n^{-7\gamma/12}M$ failures.
This proves the lemma.
\end{proof}

\subsection{A general result for $H$-games}
\label{subsec:H-game-strategy}
The goal of this subsection is to prove Theorem~\ref{thm:H-game_Maker}.

\begin{proof}[Proof of Theorem~\ref{thm:H-game_Maker}]
	Let $\gamma>0$ and $r \ge 2$ be any integer, let 
	$\alpha \in ( \frac{r-2}{r-1},\frac{r-1}{r} ]$, 
	$H$ be a fixed graph with $m_2^{(r)}(H)>0$, and let $G_\alpha$ be any 
	$n$-vertex graph with minimum degree at least 
	$\alpha n$ and \[p \ge n^{-1/m_2^{(r)}(H)+\gamma}\, .\]
	By the definition of $m_2^{(r)}(H)$, we find a 
	partition $P_1,\dots,P_r$ of $V(H)$
	such that $m_2(H[P_i])\leq m_2^{(r)}(H)$
	for every $1 \le i \le r$.	
	For short, let $H_i=H[P_i]$ for $1 \le i \le r$,
	$H^{(i)} = H[P_1 \cup \dots \cup P_i]$ for 
	$0 \le i \le r$, and set $\ell= \max_i \{ |P_i| \}$.
	Then $H^{(0)}$ is the empty graph and $H^{(r)}=H$, and, moreover, for $1 \le i \le r-1$, $H^{(i+1)}$ is contained in the graph that we get when we take the vertex disjoint union of $H^{(i)}$ and $H_{i+1}$ and add all edges between both graphs.
	
	We apply Lemma~\ref{lem:many-copies_Maker} with $\frac{\gamma}{2}$ for each $H_i$ such that $v(H_i) \ge 2$, resulting in some output $\beta_i=\beta(H_i)$, and set $\beta_i=1$ if $v(H_i)=1$.
	Then we set $\beta = \min_{i\in [r]} \beta_i$.
	Further, we choose $\delta >0$ such that $\alpha \ge \frac{r-2}{r-1}+\delta$.
	From Lemma~\ref{lem:reg_drc} with input
	$r$, $\frac{\delta}{2^{r+4}}$, 
	$\frac{\beta}{2}$, 
	and $\ell$ we obtain a constant $\nu \le \frac{1}{2}$.
	We then choose a positive constant 
	$\eps\le \min \{  \frac{\delta}{2^{r+10}} , \frac{\nu^r}{4}\}$, obtain 
	$m_0$ from Lemma~\ref{lem:reg_drc} with input $\eps$, 
	and obtain $\eta$ and $n_0$ from Lemma~\ref{lem:reg-turan} with inputs $\eps $ and $\delta$.
	We let $n$ be large enough for Lemma~\ref{lem:regularity_Maker} and for the application of the other lemmas, to ensure that $n \ge n_0$, $\nu^r \eta n \ge m_0$, and $(\nu^r \eta n)^{1-\beta/2} \ge n^{1-\beta}$.
	
\medskip

	Before revealing $G \sim G_{n,p}$, we apply
	Lemma~\ref{lem:reg-turan} to the graph 
	$G_{\alpha}$, and we find pairwise disjoint sets 
	$V_1,\dots,V_r$ of size at least $\eta n$ such that 
	$(V_i,V_j)$ is $\eps$-regular with density equal to some constant 
	$\delta_{i,j}\geq \frac{\delta}{2}$, 
	for every $1 \le i < j \le r$.
	We denote by $G_1$ the $r$-partite subgraph 
	of $G_{\alpha}$ with classes $V_1,\dots,V_r$.
	Finally, revealing the edges of $G \sim G_{n,p}$,
	we let $G_2$ denote the union of all graphs 
	$G[V_i]$ with $1 \le i \le r$.
	Then	 a.a.s.~we have that each $G[V_i]$ satisfies 
	the conclusion of Lemma~\ref{lem:many-copies_Maker} 
	applied to the graph $H_i$, since
	$p \ge |V_i|^{-1/m_2(H_i) + \gamma/2}$
	for large enough $n$. 
	From now on we will condition on these conclusions.
	Next we will describe a strategy for Maker in an unbiased game on $G_{\alpha}\cup G_{n,p}$, 
	and show that she can follow that strategy, leading to a copy of $H$ in her final 
	graph.
	
	\medskip

{\bf Strategy description:}	
	Maker plays as follows.
	Consider the edge-disjoint boards
	$E_{G_2}(V_i)$ for all $i\in [r]$, and 
	$E_{G_1}(V_i,V_j)$ for all $1\leq i<j\leq r$.
	Maker always plays on the same board as Breaker.
	On each of the boards $E_{G_1}(V_i,V_j)$ Maker 
	follows the strategy guaranteed by 
	Lemma~\ref{lem:regularity_Maker} and thus 
	occupies a subgraph $\maker_1 \subseteq G_1$ such that 
	$(V_i,V_j)_{\maker_1}$ 
	is $4\eps$-regular with density $\frac{\delta_{i,j}}{4}$
	for every $1 \le i<j \le r$.
	On each of the boards $E_{G_2}(V_i)$ 
	Maker follows the strategy guaranteed by 
	Lemma~\ref{lem:many-copies_Maker}
	and thus obtains a 
	subgraph $\maker_2 \subseteq G_2$ such that for 
	every $1 \le i \le r$ and any $U \subset V_i$ of 
	size at least $n^{1-\beta}$ there is 
	a copy of $H_i$ in $\maker_2[U]$.

	\medskip

{\bf Strategy discussion:} Maker can follow her 
	strategy	 by the conclusions of 
	Lemma~\ref{lem:regularity_Maker} and
	Lemma~\ref{lem:many-copies_Maker}.
	Hence, it remains to show that, by following the 
	strategy, Maker obtains a copy of $H$.	
	We will build this copy inductively in the order $H^{(0)},H^{(1)},\dots,H^{(r)}$.
	For $0 \le s \le r$ we want to find a copy of $H^{(s)}$ in $(\maker_1 \cup \maker_2)[V_1 \cup \dots \cup V_s]$ such that in the common neighbourhood (with respect to $\maker_1$) of the vertices of this copy there are pairwise disjoint sets $V^{(s)}_{s+1}\subset V_{s+1},\dots,V^{(s)}_{r}\subset V_r$ of size $\nu^s \eta n$ such that $(V^{(s)}_i,V^{(s)}_j)_{\maker_1}$ is $4 \eps \nu^{-s}$-regular with density at least $\frac{\delta}{2^{s+3}}$ for $s+1 \le i < j \le r$.
	Observe, that the above already holds for $s=0$ when we choose $V^{(0)}_i=V_i$ for $1 \le i \le r$. 
	We are finished when we arrive at $s=r$, since we then have a copy of $H^{(r)}=H$ in $\maker_1 \cup \maker_2$.
	
	Assume the above holds for some $0 \le s \le r-1$.
	We already have a copy of $H^{(s)}$ with the described properties.
	Next we apply Lemma~\ref{lem:reg_drc} to obtain a set $U \subseteq V^{(s)}_{s+1}$ of size $(\nu^s \eta n)^{1-\beta/2} \ge n^{1-\beta}$ such that any $\ell$ vertices from $U$ have at least $\nu^{s+1} \eta n$ common neighbours in each of the sets $V^{(s)}_i$ for $s+2 \le i \le r$, which we denote by $V^{(s+1)}_i$.
	Then by Lemma~\ref{lem:slicing} and the choice of $\eps$ we obtain that all pairs $(V^{(s+1)}_i,V^{(s+1)}_j)_{\maker_1}$ are $4 \eps \nu^{-s-1}$-regular with density at least $\frac{\delta}{2^{s+4}}$ for $s+2 \le i < j \le r$.
	Moreover, by the game on $E_{G_2}(V_{s+1})$ there is a copy of $H_{s+1}$ in $\maker_2[U]$.
	By construction together with the copy of $H^{(s)}$ this gives a copy of $H^{(s+1)}$ with the desired properties.
\end{proof}

\begin{rem}\label{rem:r-chromatic}
	The proof of Theorem~\ref{thm:hgame0} follows analogously.
	Indeed, when $H$ has chromatic number $r$, we have $m_2^{(r)}(H)=0$ and all $H_i$ consist of isolated vertices.
	Then Maker only needs to play on $G_1$ to obtain a copy of $H$, exactly as described above.
\end{rem}

\begin{rem}
	We do not need the $\gamma$ in Theorem~\ref{thm:H-game_Maker} when there is only a single graph $H'$ in $H_1,\dots,H_k$ that satisfies $m_2(H')=m_2^{(r)}(H)$.
	This is because in our construction we can find this copy in the last set $V_r$ using a version of Lemma~\ref{lem:many-copies_Maker} that only requires $p \ge C n^{-1/m_2(H)}$ and guarantees a copy of $H'$ in every set of size $\beta n$ for some not too small constant $\beta>0$.
	
	We also do not need the $\gamma$ when $G_\alpha$ is the $r$-partite Tur\'an-graph.
	Intuitively the behaviour should be the same as in the general case, and therefore we believe that $\gamma$ is not needed in any case.
\end{rem}

Finally, the Waiter-Client result for the $H$-game can be proven fairly easily.

\begin{proof}[Proof of Theorem~\ref{thm:H-game_WC}]

For any graph $G$ set $k(G)=2^{e(G)}$.
Let $F$ be the vertex disjoint union of $k(H)$ copies
of the graph $H$. It holds that $m^{(r)}(F)=m^{(r)}(H)$.
Hence, following Theorem 2.1 from~\cite{KST_SmoothedAnalysis},
we know that a.a.s. a graph $G\sim G_\alpha \cup G_{n,p}$
contains a copy of $F$. 

It thus remains to show that playing on $F$,
Waiter has a strategy to force a copy of $H$ in Client's graph.
This can be done by induction on $e(H)$.
Let $e$ be any edge in $E(H)$, and denote with $e_1,\ldots,e_{k(H)}$ the copies of $e$ in the copies of $H$ in $F$.
Then Waiter can offer these edges in pairs,
until Client claimed $\frac{k(H)}{2}=k(H-e)$ such edges.
Immediately afterwards, there are $k(H-e)$ copies of $H$ in which Client already claimed the copy of $e$ and in which Waiter does not occupy any edge yet. Thus the problem is reduced to force a copy of $H-e$ on the disjoint union of $k(H-e)$ copies of $H-e$, which can be shown by induction.
\end{proof}

\bigskip


\section{Concluding remarks}
\label{sec:conc}

\subsection{Optimality of Theorem~\ref{thm:H-game_Maker}}
In this paper we proved optimal results for the $k$-vertex-connectivity and Hamiltonicity Maker-Breaker games in randomly perturbed graphs.
It remains to discuss when Theorem~\ref{thm:H-game_Maker} is optimal up to the $\gamma$.
More precisely, the question is when $p \le cn^{-1/m_2^{(r)}(H)}$ is enough to ensure that a.a.s.~Breaker has a winning strategy in the $H$-game on $G_\alpha \cup \gnp$
for some choice of $G_{\alpha}$.
For this, consider $G_{\alpha}$ to be the $r$-partite Tur\'an graph on $n$ vertices, with vertex classes $V_1,\ldots,V_r$. If Maker wants to create a copy
of $H$ in the game on $G_{\alpha}\cup G_{n,p}$,
then she must have a strategy
for creating some subgraph $H'\subseteq H$
with $m_2(H') \geq m_2^{(r)}(H)$ on one of the sets $V_i$.
Therefore, it suffices for Breaker to ensure that,
when playing on $G_{n,p}$, Maker loses the Maker-Breaker $\mathcal{H}$-game, where the family $\mathcal{H}$ of winning sets consists of all copies of all such subgraphs $H'$.
A.a.s.~Breaker has a winning strategy if $\mathcal{H}$ does not contain any exceptional graph $H'$, for which the Maker-Breaker $H'$-game threshold is not known, or is not of order $n^{-1/m_2(H')}$, or $H'=K_4$.
This can be proven analogously to the Maker-Breaker $H$-game on random graphs~\cite[Theorem~2]{NSS2016}.
The reason that $K_4$ is excluded is because this case is treated separately in~\cite[Lemma~2.1]{MS2014}  and there is no immediate way to combine the proofs.
When the threshold of $H'$ is not of the order $n^{-1/m_2(H')}$, e.g.~when $H$ is a tree or a triangle, the bound from Theorem~\ref{thm:H-game_Maker} can be significantly improved as we have demonstrated in Proposition~\ref{prop:oddcycle} and~\ref{prop:book}.
It would be interesting to see if, more generally, $p=\omega(n^{-2})$ is always sufficient when there exists an edge $e \in E(H)$ such that $m^{(r)}(H-e)=0$.
This also generalises to many other graphs $H$ with $m^{(r)}_2(H) = 1$ and we give more details for one example in the next section.

\subsection{$H$-game for small graphs}

$H=K_4$ is the smallest graph for which we do not know the threshold probability for winning the $H$-game on 
$G_\alpha\cup G_{n,p}$ with $\alpha \in \left(0,\frac{1}{2} \right)$.
Note that it follows from~\cite{KST_SmoothedAnalysis} that a.a.s.~in $G_\alpha \cup \gnp$ there is a copy of $K_4$, provided that $p=\omega(n^{-2})$.

However, even when $p = o\left(n^{-5/4}\right)$,
there is a.a.s.~an easy strategy for Breaker to ensure that Maker does not get a copy of $K_4$.
Indeed, in this case, let $G_\alpha$ be any bipartite graph with $\delta(G_{\alpha})\geq \alpha n$ and with partition classes $A$ and $B$, and note that in $\gnp$ a.a.s.~all components are trees on at most $4$ vertices.
Conditioning on this event and assuming that all edges in $G[A]$ and $G[B]$ already belong to Maker, it is a simple case distinction to check that Breaker has a strategy on the edges of $G[A,B]$ to prevent Maker from claiming a copy of $K_4$.
It is plausible that with a more involved strategy for Breaker, also responding on the edges inside $A$ and $B$, the bound on $p$ can be increased further.

On the other hand, there also is a simple strategy for Maker when $p = \omega\left(n^{-8/7}\right)$.
Here, we can use Theorem~2.1 from~\cite{KST_SmoothedAnalysis} to show that $G_\alpha \cup \gnp$ will a.a.s. contain a graph created from a complete bipartite graph by adding to both partition classes many vertex-disjoint copies of stars with $7$ edges. Maker can easily claim $4$ edges from such a star, when she is the first player to claim an edge. By claiming multiple stars with $4$ edges in each partition class, she can ensure that she claims a pair of such stars such that all edges between the two stars are still free. Then she can restrict the game to the complete bipartite graph between those two stars, and again it is an easy case distinction to show that Maker can complete a copy of $K_4$ within the next $5$ moves.

It is unclear if this strategy can be significantly improved or how far the Breaker argument can be pushed.

\subsection{Maker-Breaker $K_t$-factor game}
In an $n$-vertex graph with $t | n$ a $K_t$-factor is the disjoint union of $\frac{n}{k}$ copies of $K_t$.
In this section we implicitly assume $k|n$.
Recently, 
Allen, B\"ottcher, Kohayakawa, Naves, and Person~\cite{ABKNP2017} determined the threshold bias for the 
Maker-Breaker $K_t$-factor game on $K_n$, for $t\in \{3,4\}$, up to a logarithmic factor.
Even more recently, Liebenau and Nenadov~\cite{liebenau2020threshold} determined the threshold
for every $t\geq 3$. They proved that for $t \ge 3$ there are constants $c,C>0$ such that Breaker wins if $b<cn^{2/(t+2)}$ and Maker wins if $b>Cn^{2/(t+2)}$.

We briefly summarise what is known about the appearance of a $K_t$-factor in the other models that we discussed.
Hajnal and Szemer\'{e}di~\cite{hajnal1970proof} proved that any $n$-vertex graph with minimum degree at least $\left(1-\frac{1}{t}\right)n$ contains a $K_t$-factor.
Johannson, Kahn, and Vu~\cite{JKV} showed that $n^{-2/t} \log^{2/(t^2-t)}n$ gives the threshold for the containment of a $K_t$-factor in $\gnp$.
In the perturbed model $G_\alpha \cup \gnp$ it was shown by Balogh, Treglown, and Wagner~\cite{balogh2019tilings} that for any $\alpha>0$ a probability of $p = \omega (n^{-2/t})$ is a.a.s.~sufficient, while for large $\alpha$ more precise results are known~\cite{han2019tilings}.

The $\log$-term in~\cite{JKV} is needed to ensure that every vertex is contained in a copy of $K_t$, which, of course, is immediate in the perturbed model.
Similarly, when we consider the Maker-Breaker $K_t$-factor game on $\gnp$, Maker needs to ensure that in her graph every vertex is contained in a copy of $K_t$.
Since each vertex has roughly $np$ neighbours and there has to be a copy of $K_{t-1}$ in each neighbourhood, we thus require that Maker wins the $K_{t-1}$-game in $G_{np,p}$. If $k\geq 5$ this implies that we need $p \ge C(np)^{-2/t}$ and, thus, $p \ge C' n^{-2/(t+2)}$.
Note that this probabilistic intuition aligns with the threshold bias in~\cite{liebenau2020threshold} discussed above.

While it is not known if the threshold probability for the $K_t$-factor game in $\gnp$ is $n^{-2/(t+2)}$, it would be interesting to investigate this in the perturbed model.
More precisely, we ask which lower bound on $p$ is sufficient such that Maker a.a.s.~has a winning strategy in the Maker-Breaker $K_t$-factor game on $G_\alpha \cup \gnp$.
When $p = \omega(n^{-2/(t+1)})$ Maker a.a.s.~has a strategy such that every set of linear size in $\gnp$ contains a copy of $K_t$ (similar as in Corollary~\ref{cor:BL2}).
Together with the deterministic graph this could be sufficient to ensure that Maker is able to create a $K_t$-factor.
This is particularly interesting, because this probability differs significantly from the one needed for
the $K_t$-factor game on $G_{n,p}$.

\bigskip



\end{document}